\theoremstyle{plain}
\newtheorem{theorem}{Theorem}
\newtheorem{corollary}{Corollary}
\newtheorem{lemma}{Lemma}
\newtheorem{proposition}{Proposition}
\newcommand{\bndry}{b}
\newcommand{\B}{\mathbf B}
\newcommand{ \Rop}{\mathcal R}
\newcommand{\Ropa}{\widetilde\Rop}
\newcommand{\C}{\mathbb C}
\newcommand{\CI}{\mathbf C}
\newcommand{\Cn}{\mathbb C^n}
\newcommand{\CT}{\mathcal C}
\renewcommand{\d}{{\tt{d}}}
\newcommand{\dee}{\partial}
\newcommand{\deebar}{\overline\partial}
\newcommand{\dz}{\delta (z)}
\newcommand{\D}{\mathcal D}
\newcommand{\inl}{(}
\newcommand{\inr}{)_{_{\!\mathbb R}}}
\newcommand{\inrc}{)_{_{\!\mathbb C}}}
\renewcommand{\l}{\lambda}
\renewcommand{\L}{\mathcal L_\rho (w)}
\newcommand{\La}{\Lambda}
\newcommand{\m}{m}
\newcommand{\Nop}{\mathcal N}
\renewcommand{\prec}{\langle\,}
\newcommand{\pz}{\pi (z)}
\newcommand{\Rn}{\mathbb R^N}
\newcommand{\Eop}{\mathcal E}
\newcommand{\Eopa}{\widetilde\Eop}
\renewcommand{\succ}{\,\rangle}
\newcommand{\Tcw}{T^{\C}_w}
\newcommand{\U}{U_z}
\renewcommand{\wp}{\hat w}
\newcommand{\xp}{\hat x}
\newcommand{\z}{z}
\newcommand{\zp}{\z}
\theoremstyle{definition}
\newtheorem{definition}{Definition}
\numberwithin{equation}{section}
\numberwithin{lemma}{section}
\numberwithin{theorem}{section}
\numberwithin{proposition}{section}
\begin{document}
\title[Cauchy Integral]{The Cauchy Integral in $\mathbb C^n$ for domains with minimal smoothness}
\author[Lanzani and Stein]{Loredana Lanzani$^*$
and Elias M. Stein$^{**}$}
\thanks{$^*$ Supported by a National Science Foundation IRD plan, and in part by award DMS-1001304}
\thanks{$^{**}$ Supported in part by the National Science Foundation, 
DMS-0901040}
\address{
Dept. of Mathematics,       
University of Arkansas 
Fayetteville, AR 72701}
\address{
Dept. of Mathematics\\Princeton University 
\\Princeton, NJ   08544-100 USA }

  \email{lanzani@uark.edu,\; stein@math.princeton.edu}
\thanks{2000 \em{Mathematics Subject Classification:} 30E20, 31A10, 32A26, 32A25, 32A50, 32A55, 42B20,
46E22, 47B34, 31B10}
\begin{abstract} 
We prove $L^p(\bndry\D)$-regularity of the Cauchy-Leray integral for bounded domains $\D\subset\Cn$ whose boundary satisfies the minimal regularity condition of class $C^{1,1}$, together with a naturally occurring notion of convexity. 
\end{abstract}
\maketitle
\section{Introduction}\label{S:intro}
The purpose of this paper is to study the Cauchy integral in several complex variables and in particular to 
establish its $L^2$ (and $L^p$) boundedness in the setting of minimal smoothness assumptions on the boundary of the domain. We take as our model the well-known one-dimensional theory of Calder\'on \cite{C}, Coifman-McIntosh-Meyer \cite{CMM} and David \cite{D}, and its key theorem concerning the 
 boundedness in $L^p$ of the Cauchy integral for a Lipschitz domain.
  Our goal is to find an extension to $\mathbb C^n$ of these results, and in doing so we see that the context $n>1$ requires that we recast the problem to take into account its geometric setting, and also overcome inherent difficulties that do not arise in the case $n=1$. To describe our results we begin by sketching the background.
\subsection{Situation for $n=1$} The initial result was the classical theorem of M. Riesz for the Cauchy integral of the disc (i.e. the Hilbert transform on the circle) which gave the boundedness on $L^p$, for $1<p<\infty$. The
 standard proofs which developed for this then allowed an extension to a corresponding result where the disc is replaced by a domain $\D$ whose boundary is relatively smooth, i.e. of class $C^{1+\epsilon}$, for $\epsilon >0$.  However going beyond that to the limiting case of regularity, namely $C^1$ and other variants ``near'' $C^1$, required further ideas. Incidentally, the techniques introduced in this connection led to significant developments in harmonic analysis such as the ``$T(1)$ theorem'', and various aspects of multilinear analysis and analytic capacity see e.g., \cite{C-1} and \cite{C-2}, \cite{Me-C} and \cite{T}. The importance of those advances
  suggests the natural question: {\em what might be the corresponding results for the Cauchy integral in several variables?}
 \subsection{Problem for $n>1$}  When we turn to higher dimensions we see at once two basic differences which are present, that have no analogue in one dimension.
 \medskip
 
 \noindent $\bullet$\quad  The role of pseudo-convexity. That the pseudo-convexity of the underlying domain
   is a prerequisite 
 can be understood from a variety of points of view,  one of which is discussed below. For us the key consequence of this is that this condition, which essentially involves two degrees of differentiability of the boundary, implies that the correct limiting condition of smoothness should be ``near'' $C^2$, as opposed to near $C^1$ in one dimension.\\

\noindent$\bullet$\quad   The fact that given a domain $\D$ there is an infinitude of different ``Cauchy integrals'' that present themselves, as opposed to when $n=1$. This raises the further problem of finding (or constructing) the Cauchy integral appropriate for each domain
   that will be considered. The starting point for such constructions is the Cauchy-Fantappi\'e formalism, which grants the following reproducing formula for appropriate holomorphic functions $f$ in a suitable domain $\D$:
 \begin{equation}\label{E:repr-holom}
f(z)\ \,  =\   \frac{1}{(2\pi i)^n}\!\!\int\limits_{w\in\bndry\D}\!\!\!\! f(w)\, \eta\!\wedge\!(\deebar_w\eta)^{n-1}\ =:\ \ \CI (f) (z), \quad z\in\D\, 
\end{equation}
where $\eta = \eta (w, z) =\sum_j\eta_j(w, z) dw_j$ is a ``generating form'', i.e. it 
satisfies the condition
$\sum_j\eta_j(w, z) (w_j-z_j) = 1$, while $(\deebar_w\eta)^{n-1} =
\deebar_w\eta \wedge \cdots \wedge \deebar_w\eta$, with $n-1$ factors. Note that when $n=1$ there is only one possible such form, namely $\eta(w, z) = dw/(w-z)$, 
but if $n>1$ these can be quite arbitrary. The possible choices of $\eta$ are substantially constrained by a crucial condition that must be required of a Cauchy integral \eqref{E:repr-holom}, in analogy with the situation when $n=1$: that $\CI (f)$ be holomorphic in $\D$, for any ``arbitrary'' $f$ on $\bndry\D$. 
This condition, which requires that $\eta \wedge\!(\deebar_w\eta)^{n-1}(w, z)$ be holomorphic in $z$, has a number of consequences: it of course excludes the Bochner-Martinelli integral (for which one has $\eta_j = (\overline{w}_j-\overline{z}_j)/|w-z|^2$); and if one further assumes that $\eta(w,z)$ itself is holomorphic in $z$, then since
$\sum_j\eta_j(w, z) (w_j-z_j) = 1$ it follows that $\eta (w,z)$ must be singular at $z=w\in\bndry\D$, which incidentally implies that $\D$ is pseudo-convex. This makes the construction of such $\eta$ quite problematic. 
In fact with the exception of several very specific examples, even when the domain
is smooth, such $\eta$'s have been constructed only for two broad classes: when $\D$ is convex, 
or when $\D$ is strongly pseudo-convex.

\subsection{The situation when $\D$ is smooth} When $\D$ is smooth and convex a direct and natural construction goes back to Leray \cite{L}; see also Aizenberg \cite{A}, \cite{HL},   \cite{K} and references therein. The resulting Cauchy-Leray integral is given by 
\begin{equation}\label{E:CLgen}
\eta (w, z) = \frac{\dee\rho (w)}{\Delta (w, z)}
\end{equation}
where $\rho$ is a defining function of the domain $\D$, and $\Delta (w, z) = \sum_j(w_j-z_j)\dee\rho (w)/\dee w_j$. Here $|\Delta (w, z)|>0$ for $z\in\D$ and $w\in\bndry\D$ (by the convexity of $\D$), and $\eta(w, z)$ is clearly holomorphic in $z$, for $z\in\D$.

The case when $\D$ is instead strongly pseudo-convex (and still smooth) is less direct. Starting with the Levi polynomial at $w\in\bndry\D$, in which $\Delta (w, z)$ is augmented by appropriate quadratic terms in $(w-z)$, Henkin \cite{He} and
Ramirez \cite{Ra} have constructed generating forms in this case; see also 
 \cite{KS-1}, \cite{HL}, \cite{Ra} and references therein. However the study of the mapping properties of the resulting operators has hitherto been restricted to the situation when $\bndry\D$ is relatively smooth (implicitly of class $C^{2+\epsilon}$). We now turn to our main result, which deals with the limiting situation at $\epsilon =0$.
\subsection{When $\D$ is of class $C^{1,1}$: main result} We make two assumptions on our domain $\D$ bearing on its regularity and the nature of its convexity, which are suggested by the facts discussed above. First, we suppose that $\D$ is of class $C^{1,1}$, that is, the first derivatives of its defining function are Lipschitz. Second, we assume that $\D$ is strongly $\mathbb C$-linearly convex, which requirement has a simple geometric characterization in terms of the distance of a point $z\in\D$ from the (maximal) complex 
subspace of the tangent space at $w\in\bndry\D$. This is equivalent to the condition that $|\Delta (w, z)|\geq c|w-z|^2$. (One should note that ``$\mathbb C$-linear convexity'' is essentially intermediate between convexity and pseudo-convexity as discussed in Section \ref{S:C-lin-convex}). With this, our main result is as follows:\\

{\em Under these assumptions, the Cauchy-Leray integral, when properly defined, gives a bounded mapping on $L^p(\bndry \D)$ to itself for $1<p<\infty$}.
\subsection{Some elements of the proof} One needs to deal with a series of issues that are not present in the case $n=1$. The first is the following restriction problem: how to define second derivatives of a $C^{1,1}$ function on $\bndry\D$. In fact, in the definition of the Cauchy-Leray integral (\eqref{E:repr-holom} and 
\eqref{E:CLgen}), and in all aspects of its analysis, second derivatives of the defining function $\rho$ restricted to $\bndry\D$ appear. However $\rho$ is of class $C^{1,1}$, and such functions have second derivatives (in the sense of distributions) that are $L^\infty$ functions. These are definable only almost everywhere in the ambient space $\mathbb C^n$, while $\bndry\D$ has measure zero, so in general these derivatives are not definable on $\bndry\D$. One gets around this obstacle by observing that the second derivatives arising  in the Cauchy-Fantappi\`e formula \eqref{E:repr-holom} are in fact ``tangential'', and from this one can show that they can be given a unique 
point-wise meaning (a.e. on $\bndry\D$) via a second-order Taylor expansion, or equivalently, by a suitable approximation process. 

The second characteristic fact about the Cauchy-Leray integral for $n>1$ that one exploits is that its kernel,
 $\Delta (w, z)^{-n}$ is essentially a derivative of a kernel with a singularity like $\Delta (w, z)^{-n+1}$. This fact, together with Stokes' theorem, allows one to estimate the action of the Cauchy-Leray integral on test ``bump functions'' and give the ``cancellation conditions'' that are basic in treating the Cauchy-Leray integral as a singular integral. This procedure would not be valid when $n=1$ because then instead of $\Delta (w, z)^{-n+1}$ a logarithmic term would appear.
 
 A third noteworthy aspect is the crucial role of the measure on $\bndry\D$ given by 
 $$
 d\lambda = \frac{1}{(2\pi i)^n}\dee\rho \wedge (\deebar\dee\rho)^{n-1}
 $$
 referred to as {\em Leray-Levi measure}. It appears at many points below regarding the nature of the operator $\CI$, and
  in dealing with the ``adjoint'' of $\CI$. However, the $C^{1,1}$ assumption and the strong $\mathbb C$-linear convexity imply 
 that $d\lambda$ is equivalent with the induced Lebesgue measure $d\sigma$, thus in the end the results proved for $L^p(d\lambda)$ hold as well for $L^p(d\sigma)$.
 
 \subsection{Further remarks}\label{SS:further} We make two further comments. The first bears on the sharpness of our results for the Cauchy-Leray integral and shows that neither assumption made about the domain $\D$ can be essentially relaxed. In fact, the results in \cite{BL} give two different examples of bounded convex (Reinhardt) domains for which the Cauchy-Leray integral is well-defined, 
 but is not bounded on $L^2$:
 \begin{itemize}
 \item a strongly $\C$-linearly convex domain, whose boundary is of class $C^{2-\epsilon}$.
 \item a convex domain whose boundary is of class $C^\infty$.
 \end{itemize}
 It is also interesting to compare the Cauchy-Leray integral with the (orthogonal) Cauchy-Szeg\H o projection. In one dimension their connection is quite close: for the unit disc the two are identical, and for smooth domains their difference is ``small'' in that it is a smoothing operator \cite{KS-2}. However the situation changes markedly when $n>1$. Recall that when $\D$ is smooth and strongly pseudo-convex, the asymptotic formula for the Cauchy-Szeg\H o kernel found by 
 Fefferman \cite{F} allows one to prove the $L^p$-boundedness of the operator; and if $\D$ is convex and of ``finite type'' 
 the work of McNeal \cite{Mc} (see also \cite{McS}) gives appropriate estimates for the Cauchy-Szeg\H o kernel that
 lead to $L^p$-boundedness. While the Cauchy-Leray integral and Cauchy-Szeg\H o projection agree when $\D$ is the unit ball, even when $\D$ is smooth these two operators are quite far apart. In particular we can assert that their difference 
 is a smoothing operator only in very special circumstances that occur in the situation where the ``tangential'' part of the matrix$\{\dee^2\rho/\dee z_j\dee z_k\}$ 
  vanishes on the boundary. Such domains are studied in \cite{Ma}, \cite{DT}, \cite{Bo-1} and \cite{Bo-2}, and correspond to complex ellipsoids. 
 
 The forthcoming paper \cite{LS-3} deals with the $L^p(\bndry\D)$ boundedness of the Cauchy-Szeg\H o projection for strongly pseudo-convex domains of class $C^2$. The proof involves elements of the present paper and the corresponding result for the Bergman projection \cite{LS-1}. A survey of the background of all these results is in \cite{LS-2}.

 \subsection{Organization of the paper} Section \ref{S:C11} deals with the restriction to $\bndry\D$ of appropriate second derivatives of any $C^{1,1}$ function $F$ given in $\Cn$. The restrictions arise in several alternative but equivalent ways. What matters here is that they are realized as bounded functions (or forms) on $\bndry\D$. At this stage of the exposition
  certain more standard arguments are deferred to an appendix, where they are briefly summarized. In the next section we define the notion of $\C$-linear convexity and elaborate some of its properties, including that of the Leray-Levi measure. Then in Section \ref{S:CLI} we show that the Cauchy-Leray integral is well-defined when $\D$ is merely of class $C^{1,1}$, and in particular that it is independent of the choice of defining function
  for $\D$. Thus the Cauchy-Leray integral
  is intrinsically given by the domain in question.
    The proof of the main theorem is begun in Section \ref{S:main-thm}, where it is shown that up to an acceptable error, $\CI (f) =  \Eop (df)$, where the kernel of $\Eop$ (the {\em essential part} of $\CI$) has a weaker singularity than that of $\CI$. From this the cancellation conditions of $\CI$ are deduced. The proof is concluded in Section \ref{S:proof-MT} using, as it turns out, a simplified form of the machinery of the $T(1)$ theorem of Coifman for spaces of homogeneous type, as in \cite{DJS} and \cite{C-2}. Finally, a second appendix presents a quantitative version of the implicit function theorem for $C^{1,1}$ functions that is needed throughout this work.
   
\section{Properties of $C^{1,1}$ functions}\label{S:C11}
Our aim here is to study the possibility of restricting the second derivatives of $C^{1,1}$ functions to appropriate submanifolds. First some definitions.

\subsection{The tangential Hessian}\label{SS:Thess}
Suppose $M$ is a submanifold of $\mathbb{R}^N$ and for the moment we shall assume that $M$ has at each point $w_0 \in M$ a tangent space $T_{w_0}$. If $F$ is a $C^1$ function on $\mathbb{R}^N$ then we can define, for each $w_0$, the \textquotedblleft tangential gradient\textquotedblright \ $\nabla_T F  (w_0 )$ as the vector in $T_{w_0}$ determined by $\displaystyle \inl \nabla_T F (w_0 ) , H  ) = \inl \nabla F  (w_0 ), H  )$ for every $H \in T_{w_0}$. Here  $\inl ,  )$ is the standard real inner product on $\mathbb{R}^N$, and $\nabla F$ denotes the usual gradient  of $F$ in $\Rn$.
Similarly if $F$ is of class $C^2$ in $\mathbb{R}^N$, we denote by $\nabla^2 F (w_0 )$ the symmetric matrix of second derivatives of $F$ at $w_0$, the Hessian.
 For $w_0 \in M$ we define the \textquotedblleft tangential Hessian\textquotedblright \ $\displaystyle \nabla^2_T F (w_0 )$ as the symmetric linear transformation on $T_{w_0}$ that satisfies
\begin{equation*}
   \big\inl  \nabla^2_T F  (w_0 )\, U, \ V\big ) = \big\inl \nabla^2 F  (w_0 )\, U, V \big )\quad \mbox{for all} \ \ \ U, V \in T_{w_0}.
\end{equation*}
 
\subsection{The class $C^{1,1}$} A bounded function $F$ defined in $\mathbb{R}^N$ is \textquotedblleft Lipschitz\textquotedblright \ if $| F(w) - F(z) | \leq C | w - z |$ for some $C$ and all $w, z \in \mathbb{R}^N$. This is equivalent to assuming that $\nabla F$, taken in the sense of distributions, is a vector of $L^\infty (\mathbb{R}^N )$ functions. Similarly a bounded 
  $C^1$ function $F$ is of class $C^{1,1}$, if its first derivatives are Lipschitz functions, or alternatively if the Hessian matrix $\nabla^2 F$, taken in the sense of distributions, has entries that are $L^\infty$ functions. It is then convenient to define the $C^{1,1}$ norm of $F$ as $$\displaystyle \| F \|_{C^{1,1}} = \| F \| + \| \nabla F \| + \| \nabla^2 F \|$$
where $\|\cdot\|$ denotes the norm in $L^\infty (\mathbb R^N)$.

The submanifolds $M$ we shall be interested in are the boundaries of appropriate domains $\D \subset \mathbb{R}^N$. More precisely we consider a bounded domain $\D$ with defining function $\rho$, which means that $\D = \{ z \in \mathbb{R}^N : \ \rho (z) < 0 \}$ with $\rho: \mathbb R^N\to \mathbb R$. We shall then say that $\D$ is of class $C^{1,1}$, if $\rho$ is a function of class $C^{1,1}$, and $| \nabla \rho (w) | > 0$ \ \  whenever $w\in 
\{ w : \rho (w) = 0 \} =\bndry\D$.
It is clear that in this situation $M :=b\D$
 has a tangent space at each point.

\subsection{$\nabla^2_T F$ via Taylor expansion} Suppose we are given an $F \in C^{1,1} (\mathbb{R}^N )$, then its second derivatives (taken in the sense of distributions) are $L^\infty$ functions. The question we want to deal with is what meaning can be assigned to these quantities on $M =b\D$, taking into account that $L^\infty$ functions are defined only almost everywhere in $\mathbb{R}^N$, and $M$ has measure zero as a subset of $\mathbb{R}^N$. We will give various related answers to this question, but these have all in common that there is a well-defined meaning for those second derivatives that may be said to be \textquotedblleft tangential\textquotedblright . We deal first with an answer in terms of a second-order Taylor development.

Note that if $F \in C^2 (\mathbb{R}^N )$ and $w \in b\D$, then as a consequence of Taylor's formula we have
\begin{equation}\label{E:(1.1)}
   F(w + H ) - F(w ) = \big\inl \nabla_T F(w ), H \big ) + \frac{1}{2} \big\inl \nabla^2_T F  (w)H, H \big ) + o (| H |^2 )\ \ \mbox{for}\ \ H\in T_{w}
\end{equation} 
as $H \rightarrow 0$.

Now on $\bndry \D$ there is the measure $d \sigma$ induced by Lebesgue measure on $\mathbb{R}^N$. With this we then have the following.
\begin{proposition}\label{P:1}
   Suppose $F$ is of class $C^{1,1}$ on $\mathbb{R}^N$. Then, except for $w\in b\D$ lying in a set of $d\sigma$-measure zero, there is a symmetric linear transformation $\nabla^2_T F (w)$ on $T_{w}$ so that \eqref{E:(1.1)} holds.
\end{proposition}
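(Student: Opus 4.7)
The plan is to obtain $\nabla^2_T F(w)$ by first producing a first-order tangential Taylor expansion for the Lipschitz vector field $\nabla F$ at a.e.\ $w\in b\D$, and then integrating along the affine tangent plane $w+T_w$ to recover the second-order Taylor expansion \eqref{E:(1.1)} for $F$. The hard step is the first: Rademacher's theorem yields differentiability of $\nabla F$ on a full-measure subset of $\mathbb R^N$, which is of no use on $b\D$ (a set of $N$-dimensional Lebesgue measure zero). The fix is to apply Rademacher \emph{intrinsically} on the hypersurface $b\D$, to the Lipschitz map $\nabla F|_{b\D}\colon b\D\to\mathbb R^N$.

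To carry this out, I will cover $b\D$ by finitely many $C^{1,1}$ graph charts $b\D\cap V_j=\{(x',h_j(x'))\}$. In each chart the map $x'\mapsto \nabla F(x',h_j(x'))$ is Lipschitz on an open subset of $\mathbb R^{N-1}$, hence differentiable Lebesgue-a.e.\ by Rademacher; since $d\sigma$ is mutually absolutely continuous with the pulled-back Lebesgue measure, I obtain a set $E\subset b\D$ with $\sigma(b\D\setminus E)=0$ such that at every $w\in E$ there exists a linear map $L_w\colon T_{w}\to\mathbb R^N$ with the property that
\[
\nabla F(w')-\nabla F(w)=L_w v+o(|v|) \quad \text{whenever } w'\in b\D,\ w'=w+v+\nu,\ v\in T_{w},\ |\nu|=O(|v|^2).
\]
To transfer this to the affine tangent plane, I use the $C^{1,1}$ regularity of $b\D$: for each small $H\in T_{w}$ there is a point $w_H\in b\D$ of the form $w_H=w+H+\nu_H$ with $|\nu_H|=O(|H|^2)$ (in a graph chart, replace the linear approximation $h_j(x'_0)+\nabla h_j(x'_0)\cdot u$ by $h_j(x'_0+u)$). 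Combining $|\nabla F(w+H)-\nabla F(w_H)|=O(|H|^2)$ from Lipschitzness of $\nabla F$ with the previous display applied at $w_H$ yields
\[
\nabla F(w+H)=\nabla F(w)+L_w H+o(|H|) \qquad \text{for } H\in T_{w},\ H\to 0.
\]

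Finally, I integrate along the segment $t\mapsto w+tH$, $t\in[0,1]$, which lies in $w+T_{w}$:
\[
F(w+H)-F(w)=\int_0^1\! \big\inl\nabla F(w+tH),H\big)\,dt = \big\inl\nabla_T F(w),H\big)+\tfrac12\big\inl L_w H,H\big)+o(|H|^2),
\]
the $o(|H|^2)$ error being controlled uniformly in $t\in[0,1]$ by the previous display. I then take $\nabla^2_T F(w)$ to be the unique symmetric endomorphism of $T_{w}$ whose quadratic form on $T_{w}$ agrees with $H\mapsto \inl L_w H,H)$ — equivalently, the symmetrization of the tangential component of $L_w$ — which produces \eqref{E:(1.1)} at every $w\in E$. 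Aside from the intrinsic application of Rademacher on $b\D$, every step reduces to routine $C^{1,1}$ bookkeeping.
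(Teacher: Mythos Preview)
Your argument is correct, and it takes a genuinely different route from the paper's.

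The paper works directly with $F$ rather than with $\nabla F$. In a local graph chart $y=\varphi(x)$, it sets $f_0(x)=F(x,\varphi(x))$ and $f_1(x)=\partial_y F(x,\varphi(x))$, then splits
\[
F(w+H)-F(w)=\underbrace{f_0(x+h)-f_0(x)}_{I}\;+\;\underbrace{F(x+h,\varphi(x)+h_N)-F(x+h,\varphi(x+h))}_{II},
\]
and applies the \emph{second-order} Rademacher theorem (Lemma~\ref{L:1}) to the $C^{1,1}$ functions $f_0$ and $\varphi$. This yields not just existence of $\nabla^2_T F(w)$ but the explicit formula
\[
\inl\nabla_T^2 F(w)\,H,H)\;=\;\inl(\nabla^2 f_0(x)-f_1(x)\nabla^2\varphi(x))\,h,h),
\]
where the right-hand side involves the distributional second derivatives of $f_0$ and $\varphi$.

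Your approach, by contrast, applies only the first-order Rademacher theorem to the Lipschitz map $x'\mapsto\nabla F(x',h_j(x'))$, transfers the resulting tangential differential of $\nabla F$ from $b\D$ to the affine tangent plane via the $O(|H|^2)$ correction, and then integrates along the segment $t\mapsto w+tH$. This is more elementary in that it avoids Lemma~\ref{L:1} (whose proof is outsourced to \cite{CZ-1}, \cite{S-1}), and conceptually cleaner: differentiate $\nabla F$, then integrate. What it does not immediately deliver is the explicit graph-chart formula above, which the paper relies on heavily in the sequel --- Proposition~\ref{P:2} (approximation by smooth $F_k$), Corollary~\ref{C:1} (uniqueness of the limit), and hence the construction of $j^*(\overline\partial\partial F)$ in Proposition~\ref{P:3}. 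Your $L_w$ is the Rademacher differential of a composition, and identifying its symmetrized tangential part with $\nabla^2 f_0-f_1\nabla^2\varphi$ (in the distributional sense) would require a separate argument if you wanted to plug into the rest of the paper.
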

Note that $\nabla^2_T F (w)$ is then uniquely determined by \eqref{E:(1.1)}.
To prove this proposition we need the second-order version of Rademacher's theorem.
\begin{lemma}\label{L:1} If $g$ is a $C^{1,1}$ function on $\mathbb{R}^{N-1}$, then  for $h\in\mathbb R^{N-1}$ 
      and for a.e. $x \in \mathbb{R}^{N-1}$
$$\displaystyle g(x + h) - g(x) = \inl \nabla g(x), h  ) + \frac{1}{2} \inl \nabla^2 g(x) h, h  ) + o (| h |^2 ),\quad \mbox{as}\ \ h \rightarrow 0.$$ 
The components of $\nabla g$ and $\nabla^2 g$ are the $L^\infty$ functions that arise when the respective derivatives are taken in the sense of distributions.
\end{lemma}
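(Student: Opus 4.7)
The plan is to derive this second-order Taylor expansion from a standard (first-order) application of Rademacher's theorem to the gradient $\nabla g$, and then obtain the required expansion for $g$ itself via an elementary integration along line segments.

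First I would observe that, by hypothesis, $g$ is $C^{1}$ with $\nabla g$ Lipschitz; in particular each component $\partial_j g$ is a Lipschitz function on $\mathbb{R}^{N-1}$. Applying the classical Rademacher theorem to the vector-valued Lipschitz map $\nabla g$, there is a full-measure set $E \subset \mathbb{R}^{N-1}$ such that for every $x \in E$, the map $\nabla g$ is differentiable at $x$, and its classical differential coincides with the distributional derivative $\nabla^2 g(x)$; this latter identification is a standard fact for Lipschitz maps (the pointwise a.e.\ derivative of a Lipschitz function agrees with its distributional derivative a.e.). Thus for each $x \in E$ we have the first-order expansion
\begin{equation*}
 \nabla g(x+k) = \nabla g(x) + \nabla^2 g(x)\, k + e(k), \qquad |e(k)| = o(|k|) \ \ \text{as } k \to 0.
\end{equation*}

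Next, since $g \in C^{1}(\mathbb{R}^{N-1})$, for any $x,h$ the fundamental theorem of calculus gives
\begin{equation*}
 g(x+h)-g(x) \;=\; \int_0^1 \bigl\langle \nabla g(x+th),\, h \bigr\rangle \, dt.
\end{equation*}
Fix $x \in E$ and substitute the expansion above with $k = th$:
\begin{equation*}
 \bigl\langle \nabla g(x+th), h \bigr\rangle = \bigl\langle \nabla g(x), h \bigr\rangle + t\bigl\langle \nabla^2 g(x) h, h \bigr\rangle + \bigl\langle e(th), h \bigr\rangle .
\end{equation*}
Integrating in $t$ from $0$ to $1$ produces exactly the two leading terms $\langle \nabla g(x),h\rangle$ and $\tfrac{1}{2}\langle\nabla^2 g(x)h, h\rangle$, so it remains only to estimate the error
\begin{equation*}
R(h) \;:=\; \int_0^1 \bigl\langle e(th),\, h\bigr\rangle\, dt.
\end{equation*}

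The one point that requires care, and is the main (mild) obstacle, is that the $o(|k|)$ decay of $e$ must be used uniformly in the parameter $t \in [0,1]$. To this end set
\begin{equation*}
 \varepsilon(r) \;:=\; \sup_{0<|k|\leq r} \frac{|e(k)|}{|k|},
\end{equation*}
which is finite (since $\nabla g$ is Lipschitz) and tends to $0$ as $r \to 0^+$ by the differentiability of $\nabla g$ at $x$. Then for $t\in[0,1]$ and $|h|\le r$ we have $|th|\le r$, hence $|e(th)|\le \varepsilon(r)\,|th|\le \varepsilon(r)\,|h|$, and therefore
\begin{equation*}
|R(h)| \;\leq\; \int_0^1 |e(th)|\,|h|\,dt \;\leq\; \varepsilon(r)\,|h|^2 \;=\; o(|h|^2),
\end{equation*}
which completes the proof. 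The symmetric matrix $\nabla^2 g(x)$ featured in the expansion is, by construction, the same matrix of distributional second derivatives, as required by the statement of the lemma.
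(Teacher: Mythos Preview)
Your argument is correct. The paper itself does not prove this lemma at all; it simply cites Calder\'on--Zygmund \cite{CZ-1} and Stein \cite[VIII.6.1]{S-1}. Your route---apply Rademacher to the Lipschitz map $\nabla g$, then recover the second-order expansion of $g$ by integrating $\nabla g$ along the segment $t\mapsto x+th$---is an elementary, self-contained argument that is fully adequate in the $C^{1,1}$ setting. The references the paper cites develop considerably more general machinery (second-order differentiability in Sobolev and related spaces), which is needed when $\nabla g$ is not Lipschitz; in the present situation your direct approach is both simpler and sufficient. The one delicate point---uniformity in $t\in[0,1]$ of the $o(|k|)$ error---you handle correctly via the modulus $\varepsilon(r)=\sup_{0<|k|\le r}|e(k)|/|k|$, whose finiteness uses the Lipschitz bound on $\nabla g$ and whose decay to zero is exactly the statement of differentiability at $x$.
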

For the proof, see \cite{CZ-1} and \cite[VIII.6.1]{S-1}.
\smallskip

Now by an appropriate partition of unity and the implicit function theorem applied to the equation $\rho (w) = 0$ (see Appendix II) we can reduce matters to the following situation: $F$ is supported in a (small) ball in $\mathbb{R}^N$; the coordinates for $w$ in $\mathbb{R}^N$ can be taken to be $w = (x,y)$ with $x \in \mathbb{R}^{N-1}, \ y \in \mathbb{R}$.
In this ball, $\D = \{ (x,y): y > \varphi (x) \}$, where $\varphi$ is a $C^{1,1}$ function on $\mathbb{R}^{N-1}$. 
Then $d \sigma = \left( 1 + | \nabla \varphi (x) |^2 \right)^{\nicefrac{1}{2}} dx$, with $dx$ Lebesgue measure on $\mathbb{R}^{N-1}$, and hence sets of measure zero with respect to $d \sigma$ are sets of measure zero with respect to Lebesgue measure on $\mathbb{R}^{N-1}$.\\

We define $f_0$ and $f_1$ to be the functions on $\mathbb{R}^{N-1}$ that come about by restricting $F$ and 
$\dee_y F$
respectively,  to $\bndry \D = \{ (x,y) : y = \varphi (x)\}$. They are given by 
\begin{equation}\label{E:def-f-0}
\displaystyle f_0 (x) = F(x, \varphi (x))\qquad 
 \mbox{and}\qquad
 \displaystyle f_1 (x) =
 \frac{\dee F}{\dee y} 
 (x, \varphi (x)).
 \end{equation}
 Hence $f_0$ is a $C^{1,1}$ function while $f_1$ is a Lipschitz function.\\

   Now if $H \in T_{w},$ $w = (x, \varphi (x) ) \in b\D$, then
   $$ H = ( h, h_N ),\ \mbox{where} \ h \in \mathbb{R}^{N-1}\ \mbox{and}\  h_N = \inl \nabla\varphi (x) , h  ).$$
   (Note that in the above we are using the notation $(,)$ to denote two different things: a pair in the first occurrence, and
    the  inner product in $\mathbb R^{N-1}$ in the second occurrence.)
So 
$$F(w + H ) - F (w ) = I + II$$ where
$$
I = F(x + h , \ \varphi (x+h)) - F(x, \varphi (x))\qquad \mbox{and}$$ 
$$II = F(x + h , \ \varphi (x) + h_N ) \ - F(x + h, \varphi (x + h) ).$$

Observe that $I = f_0 (x + h) - f_0 (x)$, so applying 
Lemma \ref{L:1} to $g=f_0$
gives us
\begin{equation}\label{E:I}
   I = \inl\nabla f_0 (x), h  ) + \frac{1}{2} \inl \nabla^2 f_0 (x) h, h  ) + o (| h |^2 )\quad \mbox{for a.e.}\ \ x \in \mathbb{R}^{N-1}.
\end{equation}
However,
\begin{equation*}
   II = F (x + h, \ \varphi (x+h) + \delta ) - F(x + h, \ \varphi (x + h) )
\end{equation*}
where $$\delta = \varphi (x) + \inl \nabla \varphi (x), h  ) - \varphi (x + h).$$
Lemma \ref {L:1} applied to $g = \varphi$
 yields 
\begin{equation*}
   \delta = - \frac{1}{2} \inl \nabla \varphi (x) h,h  ) + o (| h |^2 ) , 
   \ \ \  \text{ for } a.e. \ \ \ x \in \mathbb{R}^{N-1}. 
\end{equation*}

The fact that $F$ is of class $C^{1,1}$ then shows that (uniformly in $h$),
\begin{equation*}
   II \ = \ \frac{\dee F}{\dee y} (x + h, \varphi (x + h)) \cdot \delta + O (\delta^2 )\, =\,
  f_1 (x+h)\delta + O(\delta^2 ) \, .
\end{equation*}
However by the Lipschitz character of $f_1$, we have  $f_1 (x+h) - f_1 (x) = O(|h|)$.

Altogether then 
\begin{equation}\label{E:(1.3)}
   II = -f_1 (x) \cdot \frac{1}{2} \inl \nabla^2 \varphi (x) h, h  ) + o (| h|^2 ) + O (| h | \delta ) + O (\delta^2 )
\end{equation} 
But the last two terms are $O(| h|^3 )$, so the addition of \eqref{E:I} and  \eqref{E:(1.3)} gives us the desired result \eqref{E:(1.1)} with
\begin{equation*}\label{E:(1.4)}
   \inl \nabla_T F\, (w), H  ) = \inl \nabla f_0  (x), h ) \quad \text{ and}
\end{equation*} 
\begin{equation}\label{E:(1.5)}
\inl \nabla_T^2 F\, (w) H, H   ) = 
 \inl \big( \nabla^2 f_0 (x) - f_1 (x) \nabla^2 \varphi (x)\big)h, h  )  \, ,
\end{equation}
where the first inner product is taken in $\mathbb R^N$ and the second is over $\mathbb R^{N-1}$.
More generally, if $ U$ and $V$ are in $T_{w}$ with $w=(x, \varphi (x))$ and
$U= (u, u_N)$, $V=(v, v_N)$, then for a.e. such $w$,
\begin{equation*}\label{E:(1.5)a}
 \inl \nabla_T^2 F\, (w) U, V   ) = 
 \inl \big( \nabla^2 f_0 (x) - f_1 (x) \nabla^2 \varphi (x)\big)u, v  )  \, .
\end{equation*} 
It is clear from this that the components of $\nabla_T^2 F$ are functions that belong to $L^\infty(\bndry\D, d\sigma)$.
\subsection{$\nabla^2_T F$ via approximations}\label{SS:1.4}
The above proposition gives us our first version of the existence of $\nabla^2_T F$ almost everywhere on $\bndry \D$. The next is in terms of approximations.

\begin{proposition}\label{P:2}
Suppose $F$ is a $C^{1,1}$ function on $\mathbb{R}^N$ and $\D$ is, as above, a domain of class $C^{1,1}$. Then there exists a sequence $\{ F_k \}$ of $C^\infty$ functions on $\mathbb{R}^N$ so that 
\begin{enumerate}
   \item The $\{ F_k \}$ have bounded $C^{1,1}$ norms, uniformly in $k$.
\item $F_k$ and $\nabla F_k$ converge uniformly on $\bndry \D$ to $F$ and $\nabla F$, respectively, as $k \rightarrow \infty$. More precisely, we have
\begin{equation}\label{E:unif-quantified}
F_k(w)-F(w)= O(1/k^2);\ \ \nabla F_k(w) - \nabla F(w) = O(1/k),\quad \mbox{if}\ w\in\bndry\D.
\end{equation}
\item $\nabla^2_T F_k$ converges to  $\nabla^2_T F$ almost everywhere on $\bndry \D$.
\end{enumerate}
\end{proposition}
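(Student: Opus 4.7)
The natural construction is $F_k := F*\phi_k$, where $\phi \in C^\infty_c(\mathbb R^N)$ is non-negative, even (so $\phi(-w)=\phi(w)$), has $\int \phi = 1$, and $\phi_k(w) := k^N\phi(kw)$. Then each $F_k \in C^\infty(\mathbb R^N)$. Part (1) is immediate from Young's inequality: for $j=0,1,2$,
\[
\|\nabla^j F_k\|_{L^\infty} \le \|\nabla^j F\|_{L^\infty},
\]
where $\nabla^2 F$ is understood distributionally (and lies in $L^\infty$ by the $C^{1,1}$ assumption). For part (2), the Lipschitz property of $\nabla F$ combined with $\int w'\phi_k(w')\,dw'=0$ (evenness) yields
\[
F_k(w)-F(w) = \int[F(w-w')-F(w)]\,\phi_k(w')\,dw' = -\nabla F(w)\!\cdot\!\int w'\phi_k\,dw' + O\!\left(\int|w'|^2\phi_k\,dw'\right),
\]
so $F_k-F=O(1/k^2)$ on all of $\mathbb R^N$; and directly $\nabla F_k-\nabla F = O(\int|w'|\phi_k\,dw') = O(1/k)$.

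For part (3), apply the identity \eqref{E:(1.5)} to the smooth $F_k$ (where it holds at every $x$ at which $\nabla^2\varphi$ exists) and to $F$ (where, by Proposition \ref{P:1}, it holds a.e.\ on $\mathbb R^{N-1}$). This reduces the problem to showing the two convergences
\[
f_{1,k}(x)\to f_1(x),\qquad \nabla^2 f_{0,k}(x)\to\nabla^2 f_0(x)\quad\text{a.e.\ on }\mathbb R^{N-1},
\]
with $f_{0,k},f_{1,k}$ as in \eqref{E:def-f-0} but for $F_k$. The first convergence is uniform: applying part (2) to $\partial_y F$ (which is Lipschitz) gives $\|f_{1,k}-f_1\|_{L^\infty(\bndry\D)}=O(1/k)$, so $f_{1,k}\nabla^2\varphi \to f_1\nabla^2\varphi$ at every point where $\nabla^2\varphi$ is defined.

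The second convergence is the main obstacle, because $f_{0,k}(x)=F_k(x,\varphi(x))$ is \emph{not} itself a mollification of $f_0$ on $\mathbb R^{N-1}$. My plan is to compare it with the auxiliary function $\tilde f_{0,k} := f_0 * \psi_k$, where $\psi_k(x'):=\int\phi_k(x',y')\,dy'$ is the $(N-1)$-dimensional marginal. By standard Lebesgue differentiation, $\nabla^2\tilde f_{0,k}(x)=(\nabla^2 f_0)*\psi_k(x)\to\nabla^2 f_0(x)$ at every Lebesgue point of the $L^\infty$ field $\nabla^2 f_0$, hence a.e. The error $D_k(x):=f_{0,k}(x)-\tilde f_{0,k}(x)$ has the explicit integral representation
\[
D_k(x)=\int\bigl[F(x-x',\varphi(x)-y')-F(x-x',\varphi(x-x'))\bigr]\phi_k(x',y')\,dx'\,dy',
\]
and Taylor expansion of $F$ in the $y$-variable around $\varphi(x-x')$, combined with the $C^{1,1}$ expansion $\varphi(x)-\varphi(x-x')=\nabla\varphi(x-x')\!\cdot\!x'+O(|x'|^2)$ and the evenness of $\phi_k$ in each variable, reduces $D_k$ to a single leading term of the form $\int h(x-x')(\nabla\varphi(x-x')\!\cdot\!x')\psi_k(x')\,dx'$ plus remainders controlled by $\int(|x'|^2+|y'|^2)\phi_k = O(1/k^2)$.

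The hard part will be to show that the second derivatives of this leading term, and of the $O(1/k^2)$ remainders, go to zero at a.e.\ $x$. For the leading piece, one writes it as a convolution of the Lipschitz function $h=f_1\nabla\varphi$ against the vector-valued kernel $x'\psi_k(x')$, whose first moment vanishes (by evenness) and whose second moment is $O(1/k^2)$; differentiating twice and transferring one derivative onto $h$ (using its Rademacher derivative a.e.) produces quantities of the form $(\partial h)*K_k$ where $\int K_k = O(1/k)$, giving the desired a.e.\ vanishing. The remainder analysis is analogous but more technical, relying on the pointwise Taylor bound available at Lebesgue points of the relevant $L^\infty$ tensors. Combining, $\nabla^2 f_{0,k}(x)=\nabla^2\tilde f_{0,k}(x)+\nabla^2 D_k(x)\to\nabla^2 f_0(x)$ at a.e.\ $x$, and the proposition follows.
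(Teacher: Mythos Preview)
Your construction $F_k=F*\phi_k$ is genuinely different from the paper's. The paper does \emph{not} mollify $F$ in $\mathbb{R}^N$; instead it first restricts to the graph, setting $f_0(x)=F(x,\varphi(x))$ and $f_1(x)=\partial_yF(x,\varphi(x))$, mollifies these and $\varphi$ in $\mathbb{R}^{N-1}$, and then reassembles via
\[
F_k(x,y)=f_0^k(x)+\gamma\bigl(y-\varphi_k(x)\bigr)\,f_1^k\bigl(x,\,y-\varphi_k(x)\bigr).
\]
The payoff is that $\widetilde f_0^k(x):=F_k(x,\varphi(x))$ differs from $f_0^k=f_0*\eta_{1/k}$ by terms whose second derivatives are easily seen to vanish a.e., so part (3) reduces to the elementary fact $\nabla^2(f_0*\eta_{1/k})\to\nabla^2 f_0$ a.e.\ in $\mathbb{R}^{N-1}$. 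The price is the somewhat elaborate construction and the double-smoothing Lemma~\ref{L:3}. Your route trades this: the construction is trivial and (1)--(2) are immediate, but (3) now requires understanding how an $N$-dimensional mollification restricts to a $C^{1,1}$ hypersurface, which is exactly the subtle point the paper's construction was designed to sidestep.

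Your outline for (3) is partly sound and partly a real gap. The comparison with $\tilde f_{0,k}=f_0*\psi_k$ is the right idea, and the leading term $P_k(x)=\sum_j(h_j*(x'_j\psi_k))(x)$ does have $\nabla^2 P_k\to0$ a.e.: transferring one derivative to $h$ gives $(\partial_ih_j)*\partial_l(x'_j\psi_k)$, and since $\partial_l(x'_j\psi_k)$ is compactly supported with integral zero and uniformly bounded $L^1$-norm, the convolution vanishes at Lebesgue points of $\partial_ih_j$. (Your ``$\int K_k=O(1/k)$'' should read ``$\int K_k=0$'', but the conclusion survives.) In fact the full first-order piece $M_k(x)=\varphi(x)\,(f_1*\psi_k)(x)-((f_1\varphi)*\psi_k)(x)$ can be handled this way without ever splitting off the $O(|x'|^2)$ part.

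The gap is the second-order remainder
\[
E_k(x)=\int\Bigl[F(x{-}x',\varphi(x){-}y')-F(x{-}x',\varphi(x{-}x'))-f_1(x{-}x')\bigl(\varphi(x){-}y'{-}\varphi(x{-}x')\bigr)\Bigr]\phi_k\,dx'dy'.
\]
You note $|E_k|=O(1/k^2)$, but this says nothing about $\nabla^2 E_k$: a function of size $1/k^2$ can have second derivatives of size $k^2$. Two $x$-derivatives cannot all be transferred to $\phi_k$ (the integrand depends on $x$ through $\varphi(x)$ as well as through $x-x'$), and after one derivative the integrand is only $O(|x'|+|y'|)$, which against $|\nabla\phi_k|$ gives a bound that is merely $O(1)$, not $o(1)$. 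Your appeal to ``pointwise Taylor bounds at Lebesgue points of the relevant $L^\infty$ tensors'' is where an actual argument is needed: one must show that the specific \emph{tangential} combination of second derivatives of $F_k$ converges on the graph, even though the individual entries $\partial_{ab}F_k(x,\varphi(x))=(\partial_{ab}F)*\phi_k(x,\varphi(x))$ need not (an $L^\infty$ function on $\mathbb{R}^N$ has no reason to have Lebesgue points along a prescribed hypersurface). This may well be true, but it is the heart of the matter and your sketch does not establish it.
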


To prove the proposition we need two approximation lemmas; these are stated below, but we defer the proofs of these rather technical facts to an appendix, so as not to interrupt the line of argument. We fix a $C^\infty$ function $\eta$ on $\mathbb{R}^{N-1}$, supported in the unit ball, normalized so that 
$$\displaystyle \int\limits_{\mathbb{R}^{N-1}}\!\!\!\!\eta (x)\, dx = 1$$
 and form the approximation to the identity $\{ \eta_\epsilon \}$, where 
 $$\displaystyle \eta_\epsilon (x) = \epsilon^{-N+1} \eta\left(\frac{x}{\epsilon}\right),\quad x\in\mathbb R^{N-1}.$$
 It will be important to take the precaution of requiring that $\eta$ is an \textit{even} function.
\begin{lemma}\label{L:2}
Assume $g \in C^{1,1} (\mathbb{R}^{N-1})$, and set 
$g_k = g \ast \eta_{\nicefrac{1}{k}}.$
   Then
\begin{enumerate}
   \item[(a)\ ] Each $g_k$ is a $C^\infty$ function.
\item[(b)\ ] $\| g - g_k \| = O (\nicefrac{1}{k^2} ), \ \  \| \nabla g - \nabla{g_k} \| = O(\nicefrac{1}{k} )$, and\ $\| \nabla^2 g_k \| = O(1), \text{ as } \ k \rightarrow \infty$.
\item[(c)\ ] $\nabla^2 g_k (x) \rightarrow \nabla^2 g (x)$, as $k\to\infty$,  for a.e.
 $x \in \mathbb{R}^{N-1}$.
\end{enumerate}
\end{lemma}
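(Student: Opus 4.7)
The plan is to treat the three parts in order, since they build on standard mollification estimates but with one twist (the evenness of $\eta$) that is essential for the sharpness in part (b).

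For part (a), I would simply note that $g_k = g * \eta_{1/k}$ where $\eta_{1/k}\in C^\infty_c(\mathbb R^{N-1})$, so $g_k \in C^\infty(\mathbb R^{N-1})$ in the classical way (differentiate under the integral sign).

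For part (b), I would handle the three estimates separately. The bound $\|\nabla^2 g_k\| = O(1)$ is immediate: since $g\in C^{1,1}$, the distributional Hessian $\nabla^2 g$ is $L^\infty$, and because differentiation commutes with convolution, $\nabla^2 g_k = (\nabla^2 g)*\eta_{1/k}$, whose $L^\infty$ norm is dominated by $\|\nabla^2 g\|_\infty \|\eta_{1/k}\|_{L^1} = \|\nabla^2 g\|_\infty$. The bound $\|\nabla g - \nabla g_k\| = O(1/k)$ uses that $\nabla g$ is Lipschitz with constant $\lesssim \|\nabla^2 g\|_\infty$; writing
\begin{equation*}
\nabla g_k(x) - \nabla g(x) = \int_{\mathbb R^{N-1}} \bigl(\nabla g(x-y) - \nabla g(x)\bigr)\,\eta_{1/k}(y)\,dy,
\end{equation*}
the integrand is $O(|y|)$ uniformly, and since $\eta_{1/k}$ is supported in the ball of radius $1/k$, the integral is $O(1/k)$. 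The most delicate estimate is $\|g-g_k\| = O(1/k^2)$; this is where I would use evenness. By Taylor's theorem applied to $g\in C^{1,1}$,
\begin{equation*}
g(x-y) - g(x) = -\bigl\inl \nabla g(x),\, y\bigr) + R(x,y), \qquad |R(x,y)| \leq C|y|^2,
\end{equation*}
with $C\lesssim \|\nabla^2 g\|_\infty$. Integrating against $\eta_{1/k}$, the linear term drops out because $\int y\,\eta_{1/k}(y)\,dy = 0$ by evenness of $\eta$, leaving only the remainder, whose integral is $O(1/k^2)$.

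For part (c), since $\nabla^2 g \in L^\infty(\mathbb R^{N-1}) \subset L^1_{loc}(\mathbb R^{N-1})$, the Lebesgue differentiation theorem gives a set $E \subset \mathbb R^{N-1}$ of full measure consisting of Lebesgue points of each entry of $\nabla^2 g$. For any $x\in E$, the standard mollifier convergence at Lebesgue points yields
\begin{equation*}
\nabla^2 g_k(x) = \bigl((\nabla^2 g)*\eta_{1/k}\bigr)(x) \longrightarrow \nabla^2 g(x), \quad k\to\infty,
\end{equation*}
establishing the a.e.\ pointwise convergence.

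The proof is essentially routine, so I do not foresee a serious obstacle; the only non-obvious ingredient is the use of the evenness of $\eta$ to kill the linear Taylor term and upgrade the estimate on $g-g_k$ from $O(1/k)$ (which a naive bound would give) to the sharp $O(1/k^2)$ needed in \eqref{E:unif-quantified} of Proposition \ref{P:2}.
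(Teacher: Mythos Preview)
Your proposal is correct and follows essentially the same approach as the paper's proof in Appendix~I: both use the evenness of $\eta$ to annihilate the linear Taylor term and obtain the $O(1/k^2)$ bound on $g-g_k$, and both obtain (c) via the Lebesgue differentiation theorem applied to $\nabla^2 g_k = (\nabla^2 g)\ast\eta_{1/k}$. The paper phrases the Taylor step by citing Lemma~\ref{L:1}, whereas you invoke directly the uniform $O(|y|^2)$ remainder coming from the Lipschitz continuity of $\nabla g$; this is a cosmetic difference only.
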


Here and immediately below, 
 $\| \cdot \|$ indicates the norm in $L^\infty (\mathbb{R}^{N-1})$.

\noindent The second lemma deals with a Lipschitz function $g$, and here we approximate by a \textquotedblleft double smoothing\textquotedblright . As before we write 
$g_k (x) = g \ast \eta_{\nicefrac{1}{k}}(x)$ and now define the function
\begin{equation*}
g_k (x, y) =\left\{
\begin{array}{lcl}
\left(g_k \ast \eta_{| y |}\right)  (x) = (g \ast \eta_{\nicefrac{1}{k}} \ast \eta_{|y|} )(x),&\mbox{when}&y\neq 0\\
&&\\
g_k(x) = \left(g \ast \eta_{\nicefrac{1}{k}}\right)(x),&\mbox{when}&y=0
\end{array}
\right.
\end{equation*}
  It will be convenient to denote by $D$ any first-order derivative in $x$ or $y$, and by $D^2$ any second-order derivative in these variables.

\begin{lemma}\label{L:3}
 Assume $g$ is a Lipschitz function on $\mathbb{R}^{N-1}$. Then
   \begin{enumerate}
      \item[(a)\ ]  Each $g_k (x,y)$ is a $C^\infty$ function on $\mathbb{R}^N$.
\item[(b)\ ] $\| g_k (\cdot , y) \| + \| ( Dg_{k} )(\cdot , y) \| = O(1) ,$
uniformly in $k$ and $y$.
\item[(c)\ ] $\| D^2 g_k (\cdot , y ) \| = O (\min \{k, \nicefrac{1}{| y |} \})$, uniformly in $k$ and $y$.
\item[(d)\ ] For almost every $x \in \mathbb{R}^{N-1}$, we have that 
$$
\dee_{x_j}g_k
(x,y) \to 
\dee_{x_j}g
 (x),\quad 
\dee_y\,g_k
 (x,y) 
\rightarrow 0,\ \ \mbox{as}\ k \rightarrow \infty, \ \ \mbox{and}\ | y | \leq \nicefrac{1}{k}.$$
   \end{enumerate}
\end{lemma}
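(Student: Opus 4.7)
The plan hinges on a single observation about the role of the evenness of $\eta$. I would first rewrite the inner convolution defining $g_k(x,y)$ by the change of variables $u = |y|v$, obtaining
\begin{equation*}
g_k(x,y) \;=\; \int_{\mathbb{R}^{N-1}} g_k(x - |y|v)\,\eta(v)\,dv .
\end{equation*}
Since $\eta$ is even, the integrand is invariant under $v \mapsto -v$, so for any $y \in \mathbb{R}$ (positive, negative, or zero),
\begin{equation*}
g_k(x,y) \;=\; \int_{\mathbb{R}^{N-1}} g_k(x - yv)\,\eta(v)\,dv .
\end{equation*}
This is the central identity: the absolute value has been erased. Since $g_k = g \ast \eta_{1/k}$ is $C^\infty$ and $\eta$ has compact support, one can differentiate freely under the integral sign, giving (a).

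Part (b) follows by straightforward estimation under the integral. The bounds $\|g_k(\cdot,y)\| \le \|g\|_\infty$ and $\|\partial_{x_j}g_k(\cdot,y)\| \le \|\partial_{x_j}g\|_\infty$ are immediate (using $\int\eta = 1$ and that $g$, being bounded Lipschitz, has $\|g\|_\infty + \|\nabla g\|_\infty = O(1)$). For $\partial_y g_k(x,y) = -\sum_j \int v_j(\partial_{x_j}g_k)(x-yv)\eta(v)\,dv$, the factor $v_j$ is tamed by the compact support of $\eta$.

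Part (c) is the place where the two distinct scales appear, and I would prove it by giving two separate estimates and taking the minimum. The bound $O(k)$ is easy: any second $x$-derivative of $g_k$ equals $(\partial g)\ast\partial\eta_{1/k}$, whose $L^\infty$ norm is bounded by $\|\nabla g\|_\infty\,\|\nabla\eta_{1/k}\|_1 = O(k)$; a second derivative falling on the outer variables of $g_k(x,y)=\int g_k(x-yv)\eta(v)\,dv$ is controlled the same way. The bound $O(1/|y|)$ uses the key identity $(\partial_{x_l}g_k)(x-yv) = -y^{-1}\partial_{v_l}[g_k(x-yv)]$, which lets one integrate by parts in $v$ to shift a second derivative off $g_k$ and onto $\eta$, at the cost of a factor $1/y$. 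Combining gives $O(\min\{k,1/|y|\})$.

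Part (d) is where the evenness of $\eta$ plays its decisive second role. For $\partial_{x_j}g_k$, one has $\partial_{x_j}g_k(\cdot,y) = (\partial_{x_j}g)\ast\eta_{1/k}\ast\eta_{|y|}$, which is convolution of the $L^\infty$ function $\partial_{x_j}g$ against an approximate identity of scale $\le 2/k$; hence convergence to $\partial_{x_j}g(x)$ at every Lebesgue point is standard. For $\partial_y g_k$, differentiating the integral representation yields
\begin{equation*}
\partial_y g_k(x,y) \;=\; -\sum_j \int v_j\,(\partial_{x_j}g_k)(x-yv)\,\eta(v)\,dv ;
\end{equation*}
at a Lebesgue point $x$ of $\partial g$, the factor $(\partial_{x_j}g_k)(x-yv) \to \partial_{x_j}g(x)$ passing to the limit gives $-\sum_j \partial_{x_j}g(x)\int v_j\eta(v)\,dv = 0$, with the integral vanishing \emph{precisely because $\eta$ is even}. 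The main technical obstacle I anticipate is the passage to the limit inside this last integral: one must show that at a Lebesgue point of $\partial g$, the convergence $(\partial_{x_j}g_k)(x-yv) \to \partial_{x_j}g(x)$ is uniform for $v$ ranging over the support of $\eta$ (noting that $|yv|\le 1/k$), which follows from a standard quantitative Lebesgue density estimate applied on balls of radius $O(1/k)$ around $x$.
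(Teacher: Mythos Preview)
Your proof is correct, and it follows a genuinely different route from the paper's.

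The paper works directly with the convolution kernel $(\eta_{1/k}\ast\eta_{|y|})(x)$: it uses the Fourier inversion formula
\[
(\eta_{1/k}\ast\eta_{|y|})(x)=\int_{\mathbb{R}^{N-1}} e^{2\pi i x\cdot\xi}\,\widehat\eta(\xi/k)\,\widehat\eta(y\xi)\,d\xi
\]
(the evenness of $\eta$ entering here to replace $|y|\xi$ by $y\xi$ on the Fourier side) to establish smoothness in $(x,y)$ and the pointwise bounds $|D^2(\eta_{1/k}\ast\eta_{|y|})|\lesssim\min(k^{N+1},|y|^{-N-1})$, etc. It then writes $Dg_k(x,y)=\int(g(x-u)-g(x))\,D(\eta_{1/k}\ast\eta_{|y|})(u)\,du$, exploits the vanishing moments of the kernel, and for (d) invokes Rademacher's theorem in the form $g(x-u)-g(x)=(\nabla g(x),u)+o(|u|)$ at a.e.~$x$.

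Your approach is more elementary: by the change of variable plus evenness you replace $|y|$ by $y$ in the \emph{spatial} representation $g_k(x,y)=\int g_k(x-yv)\,\eta(v)\,dv$, so smoothness and differentiation under the integral are immediate without any Fourier analysis; the $O(1/|y|)$ bound then comes from a single integration by parts in $v$, and (d) from Lebesgue-point convergence for $\partial_j g\in L^\infty$. Both arguments ultimately lean on the evenness of $\eta$, but you use it on the physical side while the paper uses it on the Fourier side. Your route is shorter and avoids the auxiliary kernel identities \eqref{E:aux-3}--\eqref{E:aux-5}; the paper's route has the advantage of producing uniform pointwise bounds on the kernel itself, which could be reused elsewhere.
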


\subsection{Proof of Proposition \ref{P:2}}\label{SS:1.6} 
As in Proposition \ref{P:1}, we can reduce matters to the situation where $F$ is supported in a ball in which a coordinate system $(x,y) \in \mathbb{R}^{N-1} \times \mathbb{R}$ is given so that in this ball, $\bndry \D$ is given by $\{ ( x,y): y = \varphi (x) \}$ with $\varphi \in C^{1,1} \left( \mathbb{R}^{N-1} \right)$. As before we define the restrictions $f_0$ and $f_1$ 
as in \eqref{E:def-f-0}.
 We also let 
\begin{equation}\label{E:aux-6}
f^k_0 (x)= (f_0 \ast \eta_{\nicefrac{1}{k}})(x) , \quad 
f^k_1(x) = (f_1 \ast \eta_{\nicefrac{1}{k}})(x), \quad\mbox{and}\quad 
\varphi_k (x) = (\varphi \ast \eta_{\nicefrac{1}{k}})(x).
\end{equation}
 In addition, we define the double smoothing $f_1^k (x,y)$ of $f_1$ by 
 \begin{equation*}\label{E:aux-7}
  f^k_1 (x, y) = 
  \left\{\begin{array}{rcl}
    f^k_1 \ast \eta_{| y |} 
  & \text{ if } & y \neq 0 \\
  \\
   f^k_1 (x) & \mbox{if}& y=0
   \end{array}\right.
  \end{equation*}

Then as our approximation to $F$ we take $\{ F_k \}$ given by
\begin{equation}\label{E:(1.9)}
   F_k (x,y) = f^k_0 (x) + \gamma (y - \varphi_k (x)) f^k_1 (x,y - \varphi_k (x)).
\end{equation} 
Here $\gamma$ is a fixed $C^\infty$ function on $\mathbb{R}$ of compact support, with $\gamma (t) = t$ when $| t | \leq 1$.\\

That the $F_k$ are $C^\infty$ on $\mathbb{R}^N$ follows immediately from {\em (a)} of Lemmas \ref{L:2} and \ref{L:3}, with $g = f_0 , \ \varphi , \text{ or } f_1$. Also it is not difficult to see that {\em (b)} of Lemma \ref{L:2} and {\em (b)} and {\em (c)} of Lemma \ref{L:3} give that $F_k , \nabla F_k$, and $\nabla^2 F_k$ are uniformly bounded in $\mathbb{R}^N$ and hence the $F_k$ have bounded $C^{1,1}$ norms.
Next we see that the $F_k$ (and their first derivatives) converge uniformly on $\bndry \D$ to $F$ (and its corresponding first derivatives). In fact 
$\dee_{x_j}\!F_k$
converges uniformly to 
$$
\displaystyle \frac{\dee f_0}{\dee x_j} - \frac{\dee \varphi}{\dee x_j}f_1, \ \ \text{ when }\ y = \varphi (x)
$$ 
by {\em (b)} of Lemma \ref{L:2} applied to $g = \varphi$, and {\em (b)} of Lemma \ref{L:3} applied to $g=f_1$. For the same reasons, 
$\dee_yF_k$
converges uniformly to 
$f_1=\dee_y F$,
when $y = \varphi (x).$

Finally we come to the proof that $\displaystyle \nabla^2_T F_k \rightarrow \nabla^2_T F$, almost everywhere on $b\D$. Let us set 
$$
\widetilde{f}_0^k (x) = F_k (x, \varphi (x))\quad\mbox{ and}\quad 
\widetilde{f}_1^k (x) =  \frac{\dee F_k}{\dee y}(x, \varphi (x)).
$$
Then according to \eqref{E:(1.5)} it suffices to see that
\begin{equation}\label{E:(1.10)}
      \nabla^2 \widetilde{f}^k_0 (x) \rightarrow \nabla^2 f_0 (x)   \quad \mbox{and}\quad
\widetilde{f}_1^k (x) \rightarrow f_1 (x) \quad \mbox{for a.e.}\ \ x\in\mathbb R^{N-1},\ \mbox{ as }\ \ k \rightarrow \infty .
\end{equation} 
But by \eqref{E:(1.9)}, 
\begin{equation*}
   \widetilde{f}^k_0 (x) = f^k_0 (x) + \gamma ( \varphi (x) - \varphi_k (x)) \cdot f^k_1 (x, \varphi (x) - \varphi_k (x)) .
\end{equation*}
So to verify the first limit in \eqref{E:(1.10)}, it suffices to see that any second derivative of \linebreak
$\gamma (\varphi (x) - \varphi_k (x)) f^k_1 (x, \varphi (x) - \varphi_k (x))$ tends to zero almost everywhere. If both derivatives fall on $f^k_1 (x, \varphi (x) - \varphi_k (x))$ then the result is $O (k)$, by {\em (c)} of Lemma \ref{L:3}, while $\gamma ( \varphi (x) - \varphi_k (x)) = O \left( \nicefrac{1}{k^2} \right)$, because of {\em (b)} of Lemma \ref{L:2}, so this term tends uniformly to zero. When one derivative falls on $\gamma (\varphi (x) - \varphi_k (x))$ and the other on $f^k_1 \left( x, \varphi (x) - \varphi_k (x) \right)$ we again get a contribution that tends uniformly to zero, since $\nabla (\varphi (x) - \varphi_k (x) ) = O (\nicefrac{1}{k} )$. Finally, when both derivatives fall on $\gamma (\varphi (x) - \varphi_k (x))$, the corresponding term tends to zero almost everywhere, by {\em (c)} of Lemma \ref{L:2}, and the first limit in \eqref{E:(1.10)} is established. 

For the second limit we claim that here the convergence is in fact uniform.
  We have that 
\begin{equation*}
 \widetilde{f}^k_1 (x) = \frac{\dee F_k}{\dee y}(x, \varphi (x) ) =                                                                                                                                                                                                                      f^k_1 (x, \varphi (x) - \varphi_k (x)) + \gamma (\varphi (x) - \varphi_k (x) ) 
 \frac{\dee f^k_1 }{\dee y} (x, \varphi (x) - \varphi_k (x))                                                                                                                                                                                                                          \end{equation*}
Recalling that $\gamma (t) = t$ for $|t|<1$, we see that
 the second term is $O (\nicefrac{1}{k^2} ) \cdot O(k) = O (\nicefrac{1}{k} )$ for $k$ sufficiently large, by
{\em (b)} of Lemma \ref{L:2}  and {\em (c)} of 
Lemma \ref{L:3}. Moreover $f^k_1 (x, \varphi (x) - \varphi_k (x))$ tends uniformly to $f_1$, in view of {\em (b)} of Lemma \ref{L:3}. 
So  \eqref{E:(1.10)}  is proved and Proposition \ref{P:2} is established.

\subsection{Uniqueness}\label{SS:1.7}
Proposition \ref{P:2} gives us a particular sequence $\{ F_k \}$ of smooth functions so that $\nabla^2_T F_k$ converges  to $\nabla^2_T F$ almost everywhere on $\bndry \D$. In fact the convergence to this limit must hold for any appropriate approximating sequence.
\begin{corollary}\label{C:1}
   Suppose $\{ G_k \}$ is a sequence of $C^\infty$ functions that satisfies conditions (1) and (2) of Proposition \ref{P:2} with $G_k$ in place of $F_k$. Assume also that $\nabla^2_T G_k$ converges almost everywhere on $\bndry \D$. Then $\lim_{k \rightarrow \infty}\limits  \nabla^2_T G_k  = \nabla^2_T F$ almost everywhere on $b\D$. 
\end{corollary}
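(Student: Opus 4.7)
The plan is to mimic the local reduction used in the proof of Proposition~\ref{P:1}, and then identify the a.e.\ limit of $\nabla^2_T G_k$ with $\nabla^2_T F$ by combining the distribution-theoretic information carried by the uniform convergence of the $G_k$ themselves with the pointwise information coming from condition~(1) and the hypothesized a.e.\ convergence.

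By a partition of unity I reduce to a coordinate patch in which $\bndry\D \cap B = \{(x,y) : y = \varphi(x)\}$ with $\varphi \in C^{1,1}(\mathbb R^{N-1})$. In this patch I form the restrictions $\tilde f_0^k(x) = G_k(x,\varphi(x))$ and $\tilde f_1^k(x) = (\dee G_k/\dee y)(x,\varphi(x))$; conditions (1)--(2) of Proposition~\ref{P:2} imply that $\{\tilde f_0^k\}$ is uniformly bounded in $C^{1,1}(\mathbb R^{N-1})$ and that $\tilde f_0^k \to f_0$ and $\tilde f_1^k \to f_1$ uniformly. Applying formula~\eqref{E:(1.5)} (valid for any $C^{1,1}$ function) to the smooth $G_k$ and to $F$ gives, for a.e.\ $x$,
\[
\langle \nabla^2_T G_k(w)H, H\rangle = \langle (\nabla^2 \tilde f_0^k(x) - \tilde f_1^k(x)\,\nabla^2 \varphi(x))h, h\rangle,
\]
and the analogous formula for $F$. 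Since $\tilde f_1^k \to f_1$ uniformly and $\nabla^2 \varphi$ is a fixed $L^\infty$ matrix field, the a.e.\ convergence of $\nabla^2_T G_k$ is equivalent to the a.e.\ convergence of $\nabla^2 \tilde f_0^k$ to some matrix-valued $L_0$, and the corollary reduces to the claim that $L_0 = \nabla^2 f_0$ a.e.

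The final identification is the heart of the matter. Uniform convergence $\tilde f_0^k \to f_0$ forces $\dee_i \dee_j \tilde f_0^k \to \dee_i \dee_j f_0$ in $\mathcal D'(\mathbb R^{N-1})$. On the other hand, condition~(1) gives a uniform $L^\infty$ bound on the Hessians, so the a.e.\ limit $L_0$ lies in $L^\infty$, and Lebesgue dominated convergence promotes the hypothesized a.e.\ convergence $\dee_i\dee_j \tilde f_0^k \to (L_0)_{ij}$ to $L^1_{\mathrm{loc}}$ convergence. Testing both modes of convergence against any $\psi \in C_c^\infty$ and integrating by parts on the smooth $\tilde f_0^k$ forces the two limits to agree as distributions, giving $L_0 = \nabla^2 f_0$ a.e. The main obstacle is precisely this final identification: without the uniform $C^{1,1}$ control from condition~(1), the weak (distributional) information from $\tilde f_0^k \to f_0$ could not be reconciled with the pointwise hypothesis on the Hessians, and the two a.e.\ limits would not be forced to coincide.
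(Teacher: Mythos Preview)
Your proposal is correct and follows essentially the same route as the paper: the same local reduction to a graph $y=\varphi(x)$, the same use of formula~\eqref{E:(1.5)} to isolate $\nabla^2 \tilde f_0^k$, and the same identification of its a.e.\ limit with $\nabla^2 f_0$ by pairing the uniform convergence $\tilde f_0^k\to f_0$ (distributional information) against the uniform $L^\infty$ bound on the Hessians (dominated convergence) via integration by parts against test functions. The paper writes out this last step as the chain of equalities $\int L_{ij}\psi = \lim_k\int \dee^2_{x_ix_j}g_0^k\,\psi = \lim_k\int g_0^k\,\dee^2_{x_ix_j}\psi = \int f_0\,\dee^2_{x_ix_j}\psi = \int \dee^2_{x_ix_j}f_0\,\psi$, which is exactly your argument made explicit.
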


As above, we focus on a coordinate patch. Recalling the definitions of  $f_0$ and $f_1$, the restrictions of $F$ and $\dee F/\dee y$ to $b\D$, see \eqref{E:def-f-0},

we denote by $g^k_0 = G_k (x, \varphi (x))$ and 
$\displaystyle g^k_1 (x) =\dee_y G_k(x, \varphi (x))$ 
the restrictions of $G_k$ and $\dee _yG_k$ to $\bndry\D$.
However $g^k_0 (x)$ converges uniformly to $f_0 (x)$, 
 since $\nabla G_k$ converges uniformly to $\nabla F$ on $\bndry \D$. Hence by applying \eqref{E:(1.5)} to $F = G_k$, we see that 
 $\displaystyle \dee^2_{x_ix_j}g^k_0 (x)$ 
 converges to a limit $L_{ij} (x)$ for a.e.  $x \in \mathbb{R}^{N-1}$. Moreover 
$\displaystyle \sup_{x, k}\limits \big|\dee^2_{x_ix_j}g^k_0(x) \big| \leq C$,
since the $\{ G_k \}$ are assumed to have bounded $C^{1,1}$ norms. Hence for any test function $\psi$
\begin{align*}
   \int\limits_{\mathbb{R}^{N-1}}\!\!\!\! L_{ij} (x) \psi (x)\, dx &= \operatorname*{\lim}_k\int
    \limits\limits_{\mathbb{R}^{N-1}}\!\!\!\!
     \dee^2_{x_ix_j}g^k_0(x)\,
    \psi (x)\, dx\\
&= \operatorname*{\lim}_k\limits \int\limits_{\mathbb{R}^{N-1}}\!\!\!\! g^k_0\,
 \dee^2_{x_ix_j}\psi\,
 dx = \int\limits_{\mathbb{R}^{N-1}}\!\! \!\!f_0\,
   \dee^2_{x_ix_j}\psi\,
  dx = \int\limits_{\mathbb{R}^{N-1}}\!\!\! 
   \dee^2_{x_ix_j}f_0\,
   \psi \, dx .
\end{align*}
where $\dee^2_{x_ix_j} f_0$  are $L^\infty (\mathbb R^{N-1})$ functions which are the corresponding derivatives
of $f_0\in C^{1,1}(\mathbb R^{N-1})$ taken in the sense of distributions.
This implies $\displaystyle L_{ij} (x) = \dee^2_{x_ix_j} f_0(x)$, 
for a.e. $x \in \mathbb{R}^{N-1}$
and Corollary \ref{C:1} is proved.\\

It is interesting to point out that the approximation $\{ F_k \}$ given by \eqref{E:(1.9)} has the additional property that \textit{all} second-order derivatives of the $F_k$ converge almost everywhere on $b\D$. However we will see below that for general $\{ G_k \}$ falling in the scope of Corollary \ref{C:1}, this may not be true.
\begin{corollary}\label{C:2}
   Suppose $\{ F_k \}$ is given by  \eqref{E:(1.9)}. Then $\operatorname*{\lim}_{k \rightarrow \infty}\limits D^2 F_k$ exists $a.e. \text{ on }b\D$ \ for any second order derivative $D^2$.
  \end{corollary}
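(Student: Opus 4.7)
The plan is to work locally in the same coordinate patch used for Proposition \ref{P:2}, so that $\bndry\D = \{y=\varphi(x)\}$ and $F_k$ is given explicitly by \eqref{E:(1.9)}. Set $\delta_k(x) := \varphi(x) - \varphi_k(x)$, which is the value taken by the arguments $y-\varphi_k(x)$ of $\gamma$ and of $f_1^k$ when we restrict to $\bndry\D$. By \emph{(b)} of Lemma \ref{L:2}, $\delta_k(x) = O(1/k^2)$ and $\nabla\delta_k(x) = O(1/k)$, both uniformly in $x$. In particular, for $k$ sufficiently large $|\delta_k(x)|<1$ for every $x$, so on $\bndry\D$ we may replace $\gamma(\delta_k)$ by $\delta_k$ and use $\gamma'(\delta_k)=1$, $\gamma''(\delta_k)=0$.

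Next I would expand any $D^2 F_k(x,\varphi(x))$ by the chain rule and sort the resulting terms into four groups. The first group consists of the terms $\partial^2_{x_i x_j} f_0^k(x)$ and $-f_1^k(x,\delta_k)\,\partial^2_{x_i x_j}\varphi_k(x)$; by \emph{(c)} of Lemma \ref{L:2} and the uniform convergence $f_1^k(x,\delta_k)\to f_1(x)$ (which follows from \emph{(b)} of Lemma \ref{L:3} together with the Lipschitz character of $f_1$), these converge almost everywhere. The second group comprises the mixed terms $\partial_{x_i}\varphi_k \cdot \partial_{x_j}f_1^k(x,\delta_k)$ and $\partial_{x_j}\varphi_k \cdot \partial_{x_i}f_1^k(x,\delta_k)$; here $\partial_{x_i}\varphi_k \to \partial_{x_i}\varphi$ uniformly while $\partial_{x_j}f_1^k(x,\delta_k)\to \partial_{x_j}f_1(x)$ almost everywhere by \emph{(d)} of Lemma \ref{L:3}, applied with $|\delta_k|\le 1/k$ for large $k$. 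The third group contains the terms involving $\partial_y f_1^k(x,\delta_k)$; these tend to zero almost everywhere, again by \emph{(d)} of Lemma \ref{L:3}.

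The fourth, and most delicate, group is formed by the terms that carry a factor of the form $\delta_k \cdot D^2 f_1^k(x,\delta_k)$ arising when two derivatives fall on $f_1^k$ (or on the $\gamma(u)f_1^k$ product after one factor of $u$ has been differentiated once). Here I would use the crucial estimate \emph{(c)} of Lemma \ref{L:3}: $\|D^2 f_1^k(\cdot,y)\| = O(\min\{k,1/|y|\})$. Since $|\delta_k|=O(1/k^2)$, the product is $O(1/k^2)\cdot O(k) = O(1/k)$ uniformly, so these terms tend to zero uniformly on $\bndry\D$.

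Combining the four groups shows that $D^2 F_k(x,\varphi(x))$ converges almost everywhere. The calculation is identical for $D = \partial_{x_i}$ and $D = \partial_y$; in the pure $\partial_y^2$ case only the groups corresponding to $\partial_y f_1^k$ and $\delta_k \partial_y^2 f_1^k$ survive, and both have already been shown to vanish a.e. The main obstacle is simply bookkeeping in the chain-rule expansion of $D^2[\gamma(y-\varphi_k(x))\,f_1^k(x,y-\varphi_k(x))]$ and identifying that every potentially divergent factor $D^2 f_1^k$ is multiplied either by $\delta_k = O(1/k^2)$ or by a product of first-order factors that supplies at least two powers of $1/k$ via $\nabla\delta_k = O(1/k)$; the design of the approximation \eqref{E:(1.9)} and the small-argument behavior $\gamma(t)=t$ are precisely what make this cancellation work.
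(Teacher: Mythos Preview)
Your argument is correct and follows essentially the same line as the paper's: the paper notes that the $f_0^k$ term and the Leibniz contributions in which both derivatives fall on $\gamma$ or both fall on $f_1^k$ were already dealt with in the proof of Proposition~\ref{P:2}, and then observes that the only remaining point is the almost-everywhere convergence of $D\bigl[f_1^k(x,\,y-\varphi_k(x))\bigr]\big|_{y=\varphi(x)}$ for $D=\partial_{x_j}$ and $D=\partial_y$, which follows from part \emph{(d)} of Lemma~\ref{L:3} once $|\varphi(x)-\varphi_k(x)|\le 1/k$. Your Groups 1--4 reproduce exactly this analysis in more explicit form.

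One small correction to your closing paragraph: the alternative ``multiplied by a product of first-order factors that supplies at least two powers of $1/k$ via $\nabla\delta_k=O(1/k)$'' never actually occurs. You are computing the ambient derivative $D^2F_k(x,y)$ and then restricting to $y=\varphi(x)$; the inner variable is $u=y-\varphi_k(x)$, whose first derivatives are $\partial_{x_i}u=-\partial_{x_i}\varphi_k$ and $\partial_y u=1$, both $O(1)$ rather than $O(1/k)$. The only mechanism that tames the $O(k)$ growth of the second partials of $f_1^k$ is the prefactor $\gamma(u)=\delta_k=O(1/k^2)$, exactly as in your Group 4.
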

All the second derivatives of the first term of \eqref{E:(1.9)} have already been treated, 
so we turn to the second derivatives of $\displaystyle \gamma (y - \varphi_k (x)) f^k_1 (x, y - \varphi_k (x))$. Now if a second derivative falls on $\gamma (y - \varphi_k (x))$ or if a second derivative falls on $ f^k_1 (x, y - \varphi_k (x))$ then these contributions were taken care of above. What remains is to see that $\dee_{x_j}\!\left( f^k_1 (x, \varphi (x) - \varphi_k (x))\right)$ 
and 
$\dee_y\!\left(f^k_1 ( x, \varphi (x) - \varphi_k (x)) \right)$
each converge to limits as $k \rightarrow \infty$ .
This follows immediately from {\em (d)} in Lemma \ref{L:3}, with $g=f_1$, once we observe that, by Lemma \ref{L:2},  $| \varphi (x) - \varphi_k (x) | = O \left( \nicefrac{1}{k^2} \right) \leq \nicefrac{1}{k}$, for large $k$.\\

A simple example of a sequence $\{ G_k \}$ that approximates $F$ in the sense of 
Corollary \ref{C:1} but such that 
$\displaystyle \operatorname*{\lim}_{k \rightarrow \infty}\limits\dee^2_{yy} G_k\neq \operatorname*{\lim}_{k \rightarrow \infty}\limits \dee_{yy}^2 F_k$ 
a.e. on $b\D$, is given by
\begin{equation*}
   G_k = F_k + \left( \gamma (y - \varphi_k (x)) \right)^2 \cdot g \ast \eta_{\nicefrac{1}{k}} (x)
\end{equation*}
where $g$ is any Lipschitz function that does not vanish.

\subsection{Differential forms in $\mathbb{C}^n$}\label{SS:1.8}
We apply the above for certain differential forms arising in complex analysis. Our setting is $\mathbb{C}^n$, which we identify with $\mathbb{R}^N$, where $N=2n$. We begin by observing that for  the sequence $\{F_k\}$ and $F$ as in Proposition \ref{P:2} 
we have that
      \begin{equation*}\label{E:unif-conv-pull-back-first}
      j^\ast (d F_k) \to j^\ast (d F);\ 
      j^\ast (\dee F_k) \to j^\ast (\dee F);\ 
         j^\ast (\deebar F_k) \to j^\ast (\deebar F)
      \  \mbox{uniformly on}\  \bndry\D;
      \end{equation*}
where $j^\ast ( d F)$; $j^\ast ( \dee F)$ and $j^\ast ( \deebar F)$ are the 1-forms on 
$\bndry \D$ that arise as the pull-back of $d F$, $ \dee F$ and $\deebar F$, respectively,  via the inclusion map 
$$j : \bndry \D \hookrightarrow \mathbb{C}^n.$$ 

Suppose now that $F$ is a function of class $C^2$ on $\mathbb{C}^n$. Then $\overline{\dee} \dee F$ is a 2-form (of type (1,1)) on $\mathbb{C}^n$ whose coefficients are continuous functions. With $\D$ a domain of class $C^{1,1}$, we write $j^\ast (\overline{\dee} \dee F)$ for the 2-form on 
$\bndry \D$ that arises as the pull-back of $\overline{\dee} \dee F$ via the inclusion map. 
We now seek to define $j^\ast (\overline{\dee} \dee F)$ when $F$ is merely of class $C^{1,1} \text{ on } \mathbb{C}^n$. 
We begin by noting that if 
$F$ were of class $C^2$, then we would have the identity 
\begin{equation}\label{E:(1.11)}
   \int_{b\D}\limits\!\! j^\ast (\overline{\dee} \dee F) \wedge \psi =  \int_{\bndry \D}\limits\!\! j^\ast (\dee F) \wedge d\psi
\end{equation} 
for any $(2n-3)$-form $\psi$ on $b\D$ that is of class $C^1$. Indeed, since $\overline{\dee} \dee F = d \dee F$ and $dj^*= j^*d$,
  then the left-hand side of \eqref{E:(1.11)} is $\int_{\bndry \D}\limits \!d (j^\ast \dee F) \wedge \psi$, and applying Stokes' theorem to $M=b\D$
 (with $bM =bb\D=\emptyset$)  proves that this equals the right-hand side of \eqref{E:(1.11)}. With this in mind, the basic facts about the existence of $j^\ast (\overline{\dee} \dee F)$ when $F$ is only $C^{1,1}$ are given by the following:
\begin{proposition}\label{P:3}
   Suppose $F \in C^{1,1} (\mathbb C^n)$ (and $n\geq 2$). Then 
  there exists a (unique) 2-form on $\bndry \D$, which we write as $j^\ast (\overline{\dee} \dee F)$, whose coefficients are in 
   $L^\infty (b\D)$ and that satisfies \eqref{E:(1.11)} for all $(2n-3)$-forms $\psi$ on $\bndry\D$ that are of class $C^1$.
  \end{proposition}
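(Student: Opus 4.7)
The plan is to use the approximating sequence $\{F_k\}$ from Proposition \ref{P:2}: each $F_k$ is smooth, so \eqref{E:(1.11)} holds for $F_k$ by the Stokes-theorem argument in the paragraph preceding the proposition, and then I would pass to the limit $k\to\infty$ on both sides. The crucial observation for the left-hand side is that $j^\ast(\deebar\dee F_k)$ is \emph{tangential}: for $U,V\in T_w\bndry\D$, the value $j^\ast(\deebar\dee F_k)(U,V)$ is a real-bilinear combination of entries of $\nabla^2_T F_k(w)$, since the Hessian of $F_k$, viewed as a real bilinear form on $\mathbb R^{2n}$, is contracted only with tangent vectors and therefore only its tangential restriction appears. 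By Proposition \ref{P:2}(3) one has $\nabla^2_T F_k(w)\to\nabla^2_T F(w)$ for $d\sigma$-a.e.\ $w\in\bndry\D$; accordingly I would \emph{define} $j^\ast(\deebar\dee F)$ to be the resulting pointwise a.e.\ limit. The uniform $C^{1,1}$ bound of Proposition \ref{P:2}(1) shows that the coefficients lie in $L^\infty(\bndry\D)$.

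To pass to the limit in \eqref{E:(1.11)} applied to $F_k$, I would proceed as follows. On the right-hand side, $j^\ast(\dee F_k)\to j^\ast(\dee F)$ uniformly on the compact set $\bndry\D$ (as recorded at the start of Section \ref{SS:1.8}) and $d\psi$ is a fixed continuous form, so $\int_{\bndry\D}j^\ast(\dee F_k)\wedge d\psi\to\int_{\bndry\D}j^\ast(\dee F)\wedge d\psi$. On the left-hand side, the integrands $j^\ast(\deebar\dee F_k)\wedge \psi$ converge a.e.\ to $j^\ast(\deebar\dee F)\wedge\psi$ and are uniformly bounded on the finite-measure manifold $\bndry\D$, so dominated convergence delivers $\int_{\bndry\D}j^\ast(\deebar\dee F_k)\wedge\psi\to\int_{\bndry\D}j^\ast(\deebar\dee F)\wedge\psi$. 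Thus \eqref{E:(1.11)} is inherited by the limit 2-form.

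For uniqueness, if $\omega_1,\omega_2$ are any two 2-forms on $\bndry\D$ with $L^\infty$ coefficients both satisfying \eqref{E:(1.11)} for the given $F$, then $\int_{\bndry\D}(\omega_1-\omega_2)\wedge\psi=0$ for every $C^1$ $(2n-3)$-form $\psi$. Choosing $\psi$ locally as a bump function multiplied by a constant-coefficient $(2n-3)$-form picks out an individual coefficient of $\omega_1-\omega_2$, forcing $\omega_1=\omega_2$ a.e.\ on $\bndry\D$; this also confirms a posteriori that $j^\ast(\deebar\dee F)$ does not depend on the choice of approximating sequence $\{F_k\}$, a fact which may alternatively be read off from Corollary \ref{C:1} via tangentiality. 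The principal obstacle is establishing the tangentiality claim of the first paragraph rigorously: one must verify that the pull-back of the ambient-defined $(1,1)$-form $\deebar\dee F_k$ depends on $F_k$ only through its tangential Hessian, since only this fact permits the use of Proposition \ref{P:2}(3) and Corollary \ref{C:1}, whose conclusions concern exclusively the tangential Hessian and not arbitrary second-order derivatives of $F_k$.
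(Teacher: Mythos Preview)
Your overall architecture---approximate by the smooth $F_k$ from Proposition~\ref{P:2}, apply \eqref{E:(1.11)} to each $F_k$, and pass to the limit on both sides---matches the paper, and your treatment of the right-hand side and of uniqueness is fine. The gap is exactly where you flag it: the tangentiality claim is \emph{false}, so Proposition~\ref{P:2}(3) alone does not suffice.

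For real $F$ and real vectors $U,V$ one has the identity
\[
\deebar\dee F(U,V)=\tfrac{i}{2}\bigl[\,\inl\nabla^2 F\,(JU),V\inr-\inl\nabla^2 F\,U,JV\inr\,\bigr],
\]
so $j^\ast(\deebar\dee F)(U,V)$ for tangent $U,V$ involves the Hessian evaluated on $JU$ and $JV$, and $J$ does \emph{not} preserve $T_w\bndry\D$. Concretely, in local coordinates where $\bndry\D=\{y_n=0\}$ near $w=0$, take $U=\partial_{x_n}$ and $V=\partial_{x_1}$ (both tangent); then
\[
j^\ast(\deebar\dee F)(U,V)=\tfrac{i}{2}\bigl[F_{x_1y_n}-F_{x_ny_1}\bigr],
\]
and $F_{x_1y_n}=\inl\nabla^2 F\,\partial_{x_1},\partial_{y_n}\inr$ pairs a tangent vector with the \emph{normal} $\partial_{y_n}$, hence is not determined by $\nabla^2_T F$. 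So ``the Hessian is contracted only with tangent vectors'' is simply not what happens for $\deebar\dee F$.

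The paper repairs this not by a tangentiality argument but by invoking Corollary~\ref{C:2}: for the \emph{specific} sequence \eqref{E:(1.9)}, \emph{every} second-order derivative $D^2F_k$ converges a.e.\ on $\bndry\D$ (not just the tangential ones). That is precisely what is needed to conclude that $j^\ast(\deebar\dee F_k)$ converges a.e., after which your dominated-convergence and uniform-convergence steps go through verbatim. Once existence of the limit form satisfying \eqref{E:(1.11)} is secured this way, uniqueness follows as you wrote (and as the paper argues, by density of $C^1$ forms in $L^1$), so the limit is independent of the particular approximating sequence even though its construction used the special one.
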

\begin{proof} 
Suppose $\{F_k\}$ is the approximating sequence given by \eqref{E:(1.9)}. Then
by Corollary \ref{C:2},  the sequence $\{j^\ast (\overline{\dee} \dee F_k)\}$ 
converges almost everywhere on $\bndry\D$ (that is, every coefficient converges almost everywhere on $\bndry\D$), and by Proposition \ref{P:2} it does so boundedly. Since
 \eqref{E:(1.11)} is verified for each $F_k$ and 
$j^\ast (\dee F_k) \rightarrow j^\ast (\dee F)$ uniformly on $\bndry\D$, in the limit we get \eqref{E:(1.11)} for $F$, once we have defined 
\begin{equation*}\label{E:def-pullback-second}
j^\ast (\overline{\dee} \dee F) = \operatorname*{\lim}_{k \rightarrow \infty}\limits j^\ast (\overline{\dee} \dee F_k ).
\end{equation*}

This verifies the existence of $j^\ast ( \overline{\dee} \dee F)$. Its uniqueness, given $F$, is evident from \eqref{E:(1.11)}, since the set of forms on $b\D$ whose coefficients are in $C^1 (b\D)$ is dense in the $L^1(b\D, d\sigma)$-norm in the
 space of forms on $b\D$ whose coefficients are in $L^1(b\D, d\sigma)$.
\end{proof}

\indent We shall also want to consider the $2m$-form denoted by $(j^\ast \overline{\dee} \dee F)^m$ and given by $j^\ast ( \overline{\dee} \dee F) \wedge j^\ast (\overline{\dee} \dee F) \cdots \wedge j^\ast (\overline{\dee} \dee F)$ ($m$ wedge products), with $1 \leq m \leq n-1$.\\

We observe that 
\begin{equation}\label{E:(1.12)}
 \int_{\bndry \D}\limits (j^\ast \overline{\dee} \dee F)^m \wedge \psi = \operatorname*{\lim}_{k \rightarrow \infty}\limits \int_{\bndry \D}\limits (j^\ast \overline{\dee} \dee F_k )^m \wedge \psi                  
\end{equation} 
where $F_k$ is the approximating sequence \eqref{E:(1.9)} and $\psi$ is any $(2n-2m-1)$ form 
with integrable coefficients. This is because $( j^\ast  \overline{\dee} \dee F_k )^m$ converges to $(j^\ast \overline{\dee} \dee F)^m$ pointwise almost everywhere and boundedly. The case when $m=n-1$, which is of particular interest, will be discussed in Section 
\ref{S:CLI}
below.

\subsection{The case of Lipschitz functions}\label{SS:LF} 
The results above have analogues where the $C^{1,1}$ function $F$ is replaced by a Lipschitz function $G$. The proofs of the corresponding results are then quite a bit simpler. We present a version of these results that will be applied later.

\begin{proposition}\label{P:approx-LF}
Suppose $G$ is a Lipschitz function on $\Rn$ and $\D$ is as above. Then there exists a 
sequence $\{G_k\}$ of $C^\infty$ functions on $\Rn$ so that
\begin{enumerate}
      \item[(1)\ ] $\{G_k\}$,\ $\{\nabla G_k\}$ and 
      $\displaystyle{\{\frac1k\nabla^2 G_k\}}$ are each uniformly bounded in $x\in \Rn$ and $k$.
      \item[(2)\ ] $\{G_k\}$ converges uniformly to $G$ on $\bndry \D$.
      \item[(3)\ ] $j^*(d G_k)(w)$ converges for $\sigma$-a.e. $w\in\bndry\D$ to a limit
       which we write as $j^*(d G)(w)$.
      \item[(4)\ ] The limit in item (3) above is the unique 1-form on $\bndry \D$ that arises from any sequence $\{G'_k\}$ satisfying conditions (1)-(3) above.
      \end{enumerate}
\end{proposition}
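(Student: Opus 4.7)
The strategy is to adapt the argument of Propositions~\ref{P:1}--\ref{P:2} to this less regular setting, exploiting a crucial simplification: the pulled-back 1-form $j^*(dG)$ is a first-order, purely \emph{tangential} object, whereas $j^*(\deebar\dee F)$ in Proposition~\ref{P:3} involved genuine second derivatives. First I would invoke a partition of unity and the implicit function theorem (Appendix~II) to reduce to a coordinate patch in which $\bndry\D = \{(x,y):y=\varphi(x)\}$ with $\varphi\in C^{1,1}(\mathbb R^{N-1})$, and set $g_0(x)=G(x,\varphi(x))$, which is Lipschitz on $\mathbb R^{N-1}$. By Rademacher's theorem, $\nabla g_0(x)$ exists for a.e.\ $x$. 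For any smooth function $\widetilde G$ on $\Rn$ and any tangent vector $H=(h,\langle\nabla\varphi(x),h\rangle)$ at $w=(x,\varphi(x))$, the chain rule applied to $x\mapsto \widetilde G(x,\varphi(x))$ yields
\begin{equation*}
\big\langle j^*(d\widetilde G)(w),H\big\rangle \;=\; \big\langle\nabla\!\big[\widetilde G(\cdot,\varphi(\cdot))\big](x),h\big\rangle,
\end{equation*}
so the pulled-back differential depends only on the tangential gradient of the boundary trace.

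Guided by this identity, I would choose as approximants, in each coordinate patch, the trivially-extended mollifications
\begin{equation*}
G_k(x,y)\;:=\;g_0^k(x)\;=\;(g_0*\eta_{1/k})(x),
\end{equation*}
and assemble a global $G_k$ via the partition of unity. Condition (1) then follows from standard mollifier estimates for the Lipschitz function $g_0$: since $G_k$ is independent of $y$, only the $x$-derivatives of $g_0^k$ enter, giving $\|\nabla G_k\|_\infty=O(1)$ and $\|\nabla^2 G_k\|_\infty=O(k)$; the partition-of-unity factors contribute only bounded multiplicative errors. Condition (2) is immediate, since on $\bndry\D$ the approximant restricts to $g_0^k$, and $g_0^k\to g_0=G|_{\bndry\D}$ uniformly with rate $O(1/k)$. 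For (3), the chain-rule identity above with $\widetilde G=G_k$ yields $\langle j^*(dG_k)(w),H\rangle=\langle\nabla g_0^k(x),h\rangle$, and the right-hand side converges to $\langle\nabla g_0(x),h\rangle$ for a.e.\ $x\in\mathbb R^{N-1}$ by the classical a.e.\ convergence of gradients of mollified Lipschitz functions. I would designate this limiting value as $\langle j^*(dG)(w),H\rangle$.

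For uniqueness (4), the argument is by duality. If $\{G_k'\}$ is any other sequence satisfying (1)--(3), with a.e.\ pointwise limit form $\omega$ on $\bndry\D$, then Stokes' theorem on the closed manifold $\bndry\D$ (whose boundary is empty) gives
\begin{equation*}
\int_{\bndry\D} j^*(dG_k')\wedge\psi \;=\; -\int_{\bndry\D} G_k'\; j^*(d\psi)
\end{equation*}
for every $C^1$ $(2n-2)$-form $\psi$ on $\bndry\D$. Passing to the limit, the left side converges to $\int_{\bndry\D}\omega\wedge\psi$ by bounded convergence (using the uniform $L^\infty$ bound on $\nabla G_k'$ from (1)), while the right side converges to $-\int_{\bndry\D}G\,j^*(d\psi)$ by the uniform convergence in (2). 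Since this limit depends only on $G$ and not on the sequence, density of $C^1$ test forms in the space of integrable $(2n-2)$-forms on $\bndry\D$ forces $\omega=j^*(dG)$ a.e.\ on $\bndry\D$.

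The main obstacle is conceptual rather than technical: one must check that the trivially-$y$-extended construction $G_k(x,y)=g_0^k(x)$ genuinely captures, in the limit, the correct intrinsic tangential differential of $G$ on $\bndry\D$---and, crucially, that the transverse derivative of $G$ (which is only distributionally defined and has no pointwise meaning on the measure-zero set $\bndry\D$) never enters the calculation. This is exactly what the chain-rule identity of the first paragraph encodes, reducing (3) to the a.e.\ differentiability of the single Lipschitz function~$g_0$.
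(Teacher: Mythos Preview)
Your proposal is correct and follows essentially the same route as the paper: the same coordinate-patch reduction, the same choice $G_k(x,y)=(g_0*\eta_{1/k})(x)$ depending only on the boundary trace $g_0=G(\cdot,\varphi(\cdot))$, the same mollifier estimates for (1)--(3), and the same Stokes-theorem duality argument for uniqueness. The only cosmetic difference is that you phrase (3) via the chain-rule identity for the boundary trace, whereas the paper simply notes that $\nabla G_k(w)=(\nabla g_0^k(x),0)$ and pairs with tangent vectors directly; these are equivalent.
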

\begin{proof} As in the proof of Proposition \ref{P:1} and what follows, we may assume that $G$ is supported in a ball centered at the origin in $\Rn$, and that a coordinate system 
$(x, y)\in\mathbb R^{N-1}\times \mathbb R$ has been chosen so that in this ball
$\bndry\D =\{(x, y)\ :\ y=\varphi (x)\}$, with $\varphi$ a $C^{1,1}$ function.
Set $g(x) = G(x, \varphi (x))$, and let
\begin{equation*}
G_k(x, y) = (g \ast \eta_{1/k})(x) 
\end{equation*}
where $\eta_{1/k}$ are the approximation of the identity that were defined in Lemma \ref{L:2}. (Note that here $G_k(x, y)$ does not depend on $y$.) Now the proofs of {\em (1) - (3)} are a repetition of what has been done before.
For example, to estimate $\nabla^2 G_k$ we write
\begin{equation*}
\dee^2_{x_i x_j} G_k (x, y) = 
\dee_{x_j} g*\dee_{x_i} \eta_{1/k}(x)
\end{equation*}
where the $\dee_{x_j} g$'s are the $L^\infty$ functions that arise as the derivatives of $g$ taken in the sense of distributions. One then notes that
\begin{equation*}
\int\limits_{\mathbb R^{N-1}}\!\!\! \left|\dee_{u_i} \eta_{1/k} (u)\right| du = c\,k.
\end{equation*}
To prove {\em (3)} we first observe that 
$\dee_{x_j} G_k = 
(\dee_{x_j} g)* \eta_{1/k},
$
and hence $\dee_{x_j} G_k$ converges to a limits as $k\to\infty$ at each point in the Lebesgue set of $\dee_{x_j} g$. This shows that $\nabla G_k (w)$ converges for $\sigma$-a.e. $w\in\bndry \D$. Conclusion {\em (3)} is also proved because
 the action of $j^*(dG_k)(w)$ on any vector $v$ that is tangent to $\bndry \D$ at $w$
 is given by the inner product
 $\inl\nabla G_k (w), v)$.
 Finally, suppose $j^*(d G)$ and 
$j^*(d G')$ are respectively the limits of two sequences that satisfy {\em (1) - (3)} above. By Stokes' theorem have
\begin{equation*}
\int\limits_{\bndry\D}\!\!
j^*(d G_k)\wedge \psi = - 
\int\limits_{\bndry\D}\!\!
j^*(G_k) \, d\psi
\end{equation*}
for each $C^1$ form $\psi$ on $\bndry\D$ of degree $N-2$, with a similar identity for $G'_k$. Since 
$G_k\to G$ and $G'_k\to G$ uniformly on $\bndry\D$, we get as a result that
\begin{equation*}
\int\limits_{\bndry\D}\!\!
j^*(d G)\wedge \psi =
\int\limits_{\bndry\D}\!\!
j^*(d G')\wedge \psi \quad \mbox{for any}\ \psi\ \mbox{as above},
\end{equation*}
which gives $j^*(dG)=j^*(dG')$, concluding the proof of the proposition.
\end{proof}

\section{Geometry of the boundary of a strongly $\mathbb C$-linearly convex domain}
\label{S:C-lin-convex}
In this section we define the notion of strong $\C$-linear convexity for a given, bounded domain 
$\D$ of class $C^1$; we then outline its properties under the further assumption that $\D$ be of class $C^{1,1}$. 
\subsection{Strong $\C$-linear convexity}\label{SS:strong C-lin-convex}
Before giving the basic definitions we review some preliminaries. Our underlying space is
$\Cn$, which we identify, as before, with $\mathbb R^N$, $N=2n$.
Writing
$z\in \Cn$ as $z=(z_1,\ldots, z_n)$ with $z_j=x_j+iy_j$, the identification of $\Cn$ with 
$\mathbb R^{2n}$ is given by taking 
$
(x_1,\ldots, x_n, y_1,\ldots, y_n)=(x, y)
$
to be the coordinates in $\mathbb R^{2n}$ of the point corresponding to $z$. 
In order to distinguish between the Euclidean inner product in $\mathbb R^{2n}$ and the Hermitian inner product in $\mathbb C^n$, here and in the sequel we will adopt the notations $(\cdot,\cdot\inr$ and $(\cdot,\cdot\inrc$, respectively.
(Note that the notation we introduce here, which will be used throughout the rest of this paper, differs from that of Section \ref{S:C11}, in that there $x$ and $y$ where elements in $\mathbb R^{N-1}$ and $\mathbb R$, respectively, while here and below, $x$ and $y$ are elements of 
$\mathbb R^n$.)

Next, if we consider
the scalar multiplication by $i$ in $\Cn$, then by the above identification
$iz=J(x, y)$,
where $J$ is the linear transformation on $\mathbb R^{2n}$ given by
$J(x, y) = (-y, x)$.
We also have, as in Section  \ref{S:C11}, domains $\D$ in $\Cn$ with defining function $\rho$.
 For each $w\in\bndry\D$ we have the tangent space $T_w$, and the notion we need here is that of the ``complex tangent space'',
which we denote by $\Tcw$: it is the complex sub-space of $T_w$ of (complex) dimension $n-1$ that is given by
\begin{equation*}\label{D:Tcw}
\Tcw =\{v:\ v\in T_w\quad \mbox{and}\quad iv\in T_w\}
\end{equation*}
where the action of $i$ is interpreted in the sense described above. One can easily verify that
\begin{equation}\label{E:Tcw-rep}
\Tcw=\{v:\ \langle\dee\rho (w), v\rangle =0\}
\end{equation}
where
\begin{equation*}
\dee\rho (w) =\left(\frac{\dee\rho}{\dee w_1}(w),\ldots, \frac{\dee\rho}{\dee w_n}(w)\right)
\end{equation*}
with $2\,\dee/\dee w_j =\dee/\dee x_j -i \dee/\dee y_j$ and $w_j=x_j+iy_j$,
and we have used the notation
\begin{equation}\label{D:non-herm-pairing}
\langle z, \zeta\rangle = \sum\limits_{j=1}^nz_j\,\zeta_j
\end{equation}
if $z=(z_1,\ldots, z_n)$ and $\zeta =(\zeta_1,\ldots, \zeta_n)$. In fact
 we have that
\begin{equation}\label{E:pairing-real-im}
\left\{ \begin{array}{lcr}
\inl\nabla\rho(w), v\inr &=& 2\,\mbox{Re}\langle\dee\rho (w), v\rangle\\
\\
\inl\nabla\rho(w), Jv\inr &=& -2\,\mbox{Im}\langle\dee\rho (w), v\rangle
\end{array}\right.
\end{equation}
However $\inl\nabla\rho (w), v\inr=0$ is the same as the assertion that $v$ is in the tangent space
$T_w$. Thus \eqref{E:Tcw-rep} is equivalent with the statement that both $v$ and $iv$ are in $T_w$ and this means that $v\in \Tcw$.

The following variant of the function that appeared above
\begin{equation*}\label{D:Delta}
\Delta (w, z) = \langle\dee\rho (w), w-z\rangle
\end{equation*}
will play a basic role in what follows, and in particular as a ``denominator'' in the Cauchy-Leray integral. Further properties of $\Delta (w, z)$ are reflected in the equivalence of the two conditions
 below for any bounded domain of class $C^1$.
  \begin{align}
 &\mbox{For some}\ c>0, &|\Delta(w, z)|&\geq c|w-z|^2,&\mbox{if}\ z\in\D\ \mbox{and}\ w\in\bndry \D\, .\label{E:Delta-bound}\\
 \notag\\
 &\mbox{For some}\ c'>0, & d^E(z, w+\Tcw)&\geq c'|w-z|^2,&\mbox{if}\ z\in\D\ \mbox{and}\ w\in\bndry \D\, .\label{E:dist-bound}
 \end{align}
 Here $d^E(z, w+\Tcw)$ denotes the Euclidean distance from $z$ to the affine subspace
 $w+\Tcw$. Note that while $\Tcw$ is the complex tangent space referred to the origin, $w+\Tcw$
 is its geometric realization as an affine space tangent to $\bndry D$ at $w$.
 
 To prove the equivalence of \eqref{E:Delta-bound} and \eqref{E:dist-bound} it suffices to establish the identity
 \begin{equation}\label{E:repr-dist}
 d^E(z, w+\Tcw) = 2\,\frac{|\Delta (w, z)|}{|\nabla\rho (w)|}
 \end{equation}
 Note that the boundedness of $\D$ implies uniform bounds from above and below for $|\nabla\rho|$.
 In order to see \eqref{E:repr-dist} we introduce a coordinate system (centered at $w\in\bndry \D$) which will be useful on several different occasions. The new coordinates, $(z', z_n)$ with
 $z'=(z_1,\ldots,z_{n-1})\in \mathbb C^{n-1}$ arise from the usual coordinates by a translation
  composed with a unitary map of $\Cn$ so that:
 {\tt{(i)}}, the coordinates $(0', 0)$ correspond to the point $w$;
  {\tt{(ii)}}, the derivative along the inner pointing normal at $w$ is
   $\dee/\dee y_n$;     
 {\tt{(iii)}}, $\dee\rho (w)/\dee z_j =0,$ for $j=1,\ldots, n-1$ and
 $\dee\rho (w)/\dee x_n=0$.
 As a result we get the following, {\tt{(iv)}}:
 $$\frac{\dee\rho}{\dee z_n}(w) =
  -\frac{i}{2}\,\frac{\dee\rho}{\dee y_n}(w), \quad 
    |\nabla\rho (w)| = -2\,\frac{\dee\rho}{\dee y_n}(w)\quad \mbox{and}\quad 
  T^{\C}_w = \{(z', 0)\ |\ z'\in\mathbb C^{n-1}\}.$$
  We remark that the coordinate system we construct is not uniquely determined by $w$. However, the coordinate $z_n$ is, and so is $z'=(z_1,\ldots, z_{n-1})$ up to a unitary equivalence in $\mathbb C^{n-1}$.\\
  
  To set up these new coordinates, we first identify $\nu_w$, the inward-pointing unit normal at $w\in\bndry \D$. Since $\rho (w)=0$ and $\rho<0$ inside $\D$, then $-\nabla\rho (w)$ (as a vector in $\mathbb R^{2n}$) is an inward-pointing normal vector at $w$, and hence
  \begin{equation*}\label{E:normal}
  \nu_w=-\frac{\nabla\rho (w)}{|\nabla\rho (w)|}
  \end{equation*}
 With our  identification of $\mathbb R^{2n}$ with $\Cn$ we also have
  $\nabla\rho (w) = 2\,\deebar\rho (w)$
   where 
 \begin{equation}\label{E:dee-deebar}
  \deebar\rho (w) = \left(\frac{\dee\rho}{\dee \overline w_1}(w),\ldots, \frac{\dee\rho}{\dee\overline w_n}(w)\right) = \overline{\dee\rho (w)}
\end{equation}
with $2\,\dee/\dee\bar{w}_j = \dee/\dee x_j + i \dee/\dee y_j$ and $w_j=x_j+iy_j$.
Thus, as a vector in $\Cn$ we have that
   \begin{equation*}
  \nu_w=-2\,\frac{\deebar\rho (w)}{|\nabla\rho (w)|}\, .
  \end{equation*}
Next we note that the standard Hermitian inner product 
on $\Cn$ is
\begin{equation}\label{E:herm-non-herm}
\inl z, \zeta \inrc = \langle z, \overline\zeta\rangle = \langle \overline\zeta, z\rangle
\end{equation}
 with $\langle\cdot, \cdot\rangle$ as in in \eqref{D:non-herm-pairing}. So by  
\eqref{E:Tcw-rep}, $\Tcw$ is the complex subspace of $\Cn$ that is orthogonal
(with respect to $\inl \cdot, \cdot\inrc$) to $\nu_w$. 
Pick any orthonormal basis $\{e_1,\ldots,e_{n-1}\}$ of $\Tcw$ and set 
\begin{equation}\label{D:last-coord}
\displaystyle{e_n= i\,\nu_w = -2i\,\frac{\deebar\rho (w)}{|\nabla\rho(w)|}}, \quad \mbox{and thus}\quad
  \deebar\rho (w) =\frac{i}{2}|\nabla\rho (w)|\,e_n\, .
  \end{equation}
  So $\{e_1,\ldots,e_n\}$ is an orthonormal basis of $\Cn$, and the new coordinate
  system centered at $w$ is defined by the equation
  \begin{equation}\label{E:z-w-repr}
  z-w = \sum\limits_{j=1}^nz_j\,e_j.
  \end{equation}
   Now, taking the above
    into account then
  \begin{equation*}
  \frac{\dee (z-w)}{\dee y_n} = i\, e_n = -\nu_w\quad \mbox{for any}\ z,\ \mbox{so in particular}\quad
  \frac{\dee (z-w)}{\dee y_n}\bigg |_{w} = -\nu_w\, .
  \end{equation*}
  Moreover by \eqref{E:dee-deebar} --
    \eqref{D:last-coord},
   \begin{equation*}
    \Delta (w, z) = -\,\langle \dee\rho (w), z-w\rangle = -\,\inl z-w,\, \deebar\rho (w)\inrc =
  \frac{i}{2}|\nabla\rho (w)|\inl z-w, e_n\inrc.
  \end{equation*}
  But by \eqref{E:z-w-repr}, we have $\inl z-w, e_n\inrc = z_n$. As a result
  \begin{equation}\label{E:Delta-repr}
  \Delta (w, z) = \frac{i}{2}|\nabla\rho (w)| z_n\, 
  \end{equation}
 which gives
  \begin{equation*}\label{E:Delta-repr-a}
  |z_n| = 2\,\frac{|\Delta (w, z)|}{|\nabla\rho (w)|}.
  \end{equation*}
  Observe also by \eqref{E:z-w-repr} and the orthogonality of $z_ne_n$ with $\sum\limits_{j=1}^{n-1}z_j\,e_j$, that the distance of $z-w$ to $\Tcw$ is exactly $|z_n|$. Combining this observation with \eqref{E:Delta-repr} gives us \eqref{E:repr-dist} and thus the equivalence of
  \eqref{E:Delta-bound} and \eqref{E:dist-bound}.
  \begin{definition} Let $\D$ be a bounded
  domain of class $C^1$.
  We say that $\D$ is {\em strongly $\C$-linearly convex}  if at any boundary point it  satisfies either of the two equivalent conditions \eqref{E:Delta-bound} or \eqref{E:dist-bound}.
  \end{definition}
    This version of convexity is related to certain separation properties of the domain from its complement by (real or complex) hyperplanes, for which see \cite{APS} and \cite[IV.4.6]{Ho}. Such connection is a consequence of identity \eqref{E:repr-dist}.

Furthermore, it can be seen that strong $\mathbb C$-linear convexity is implied by strong $R$-convexity in the sense of
Polovinkin \cite{P}.

  \subsection{More about strong $\C$-linear convexity}\label{SS:more-C-linear} Here we return to the assumption that the bounded domain $\D = \{\rho<0\}$ is of class $C^{1,1}$.
   Then in accordance with Proposition \ref{P:1} the tangential Hessian
  $\nabla^2_T\rho (w)$ is defined via \eqref{E:(1.1)} for $\sigma$-almost every $w\in\bndry \D$. If for each such $w$ we denote by $\nabla^2_{T^{\C}}\rho (w)$ the restriction of the quadratic form
  $\nabla^2_T\rho (w)$ on $T_w$ to $\Tcw$, then $\nabla^2_{T^{\C}}\rho (w)$ is again defined for
  $d\sigma$-almost every $w\in\bndry \D$. We call $\nabla^2_{T^{\C}}\rho (w)$ the {\em complex tangential Hessian of} $\rho$ {\em at} $w\in\bndry \D$.
  \begin{proposition}\label{P:3.1} Suppose that $\D$ is strongly $\C$-linearly convex and of class $C^{1,1}$. Then 
   $\nabla^2_{T^{\C}}\rho (w)$ is positive definite uniformly in $w\in\bndry\D$  except for a set with $d\sigma$-measure zero.
   That is,
  there is a constant $c>0$, so that
  \begin{equation}\label{E:cplx-tang-hess-positive-def}
    \inl \nabla^2_{T^{\C}}\rho (w) h, h\inr\geq c |h|^2\quad \mbox{for any}\ h\in \Tcw\quad \mbox{and for} \ d\sigma\mbox{-almost every}\ w\in\bndry \D.
  \end{equation}
  \end{proposition}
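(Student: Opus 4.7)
The plan is to read off positivity of $\nabla^2_{T^\C}\rho(w)$ from the strong $\C$-linear convexity inequality \eqref{E:Delta-bound} by plugging in a one-parameter family of interior points aligned with the complex normal direction at $w$. Fix $w_0\in\bndry\D$ in the full-measure set on which the tangential Hessian $\nabla^2_T\rho(w_0)$ exists in the sense of Proposition \ref{P:1}, and on which the local graphing function introduced below admits a second-order Taylor expansion at the origin (Lemma \ref{L:1}). Introduce the adapted unitary coordinates $(z',z_n)$ centered at $w_0$ constructed in Section \ref{SS:strong C-lin-convex}: then $T^\C_{w_0}=\{(z',0)\}$, the direction $\dee/\dee y_n$ is the inward normal, and the boundary is locally a $C^{1,1}$-graph $y_n=\varphi(\xi)$ in real variables $\xi=(x_1,y_1,\ldots,x_{n-1},y_{n-1},x_n)$, with $\varphi(0)=0$ and $\nabla\varphi(0)=0$ by property {\tt{(iii)}} of the coordinate choice.

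For any $h'\in\C^{n-1}$ small and any $t>\varphi(h',0)$, the point $z_t=(h',it)$ lies in $\D$, and \eqref{E:Delta-repr} gives $|\Delta(w_0,z_t)|=\tfrac12|\nabla\rho(w_0)|\,t$ while $|w_0-z_t|^2=|h'|^2+t^2$. Hence \eqref{E:Delta-bound} reads $\tfrac12|\nabla\rho(w_0)|\,t\geq c(|h'|^2+t^2)$; passing to the limit $t\downarrow\varphi(h',0)$ yields
\begin{equation*}
\varphi(h',0)\geq\frac{2c}{|\nabla\rho(w_0)|}\,|h'|^2,
\end{equation*}
a bound uniform in $w_0$ since $|\nabla\rho|$ is bounded below by a positive constant on the compact set $\bndry\D$.

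Next, Lemma \ref{L:1} combined with $\varphi(0)=0$ and $\nabla\varphi(0)=0$ supplies the Taylor expansion $\varphi(h',0)=\tfrac12\inl\nabla^2\varphi(0)(h',0),(h',0)\inr+o(|h'|^2)$. Inserting this into the previous inequality and exploiting the homogeneity of the resulting quadratic form in $h'$ gives $\inl\nabla^2\varphi(0)(h',0),(h',0)\inr\geq c_1|h'|^2$ for every $h'\in\C^{n-1}$, with $c_1>0$ independent of $w_0$. Finally, formula \eqref{E:(1.5)} applied to $F=\rho$ --- using that $f_0\equiv 0$ on $\bndry\D$ and $f_1(0)=\dee\rho/\dee y_n(w_0)=-\tfrac12|\nabla\rho(w_0)|$ --- identifies
\begin{equation*}
\inl\nabla^2_T\rho(w_0)H,H\inr=\tfrac12|\nabla\rho(w_0)|\,\inl\nabla^2\varphi(0)h,h\inr
\end{equation*}
for every $H=(h,h_N)\in T_{w_0}$. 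Restricting to $H=(h',0,0)\in T^\C_{w_0}$ then produces \eqref{E:cplx-tang-hess-positive-def} with a uniform constant.

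The one place requiring vigilance is uniformity: the unitary coordinate frame is $w_0$-dependent, so the proof relies on the facts that the convexity constant $c$ in \eqref{E:Delta-bound} and the lower bound for $|\nabla\rho|$ on $\bndry\D$ are both independent of the base point, and that the $o(|h'|^2)$ Taylor remainder can be absorbed into the strict quadratic inequality --- the last step being automatic since both sides of the bound on $\nabla^2\varphi(0)$ are positively homogeneous of degree $2$, so a bound valid for small $h'$ extends to all $h'$.
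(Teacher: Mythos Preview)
Your argument is correct and follows the same route as the paper's: both translate the convexity inequality \eqref{E:Delta-bound} into a quadratic lower bound along complex tangential directions (you bound the graph height $\varphi(h',0)$ by plugging in the family $z_t=(h',it)$; the paper bounds $\rho(w+h)$ directly via the normal distance $\alpha$ from $w+h$ to $\bndry\D$), and then read off positivity of $\nabla^2_{T^\C}\rho$ from a second-order Taylor expansion and homogeneity.

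Two minor remarks. First, in the adapted frame only $\partial\rho/\partial y_n$ survives among the real first partials, so $f_1(0)=-|\nabla\rho(w_0)|$, not $-\tfrac12|\nabla\rho(w_0)|$; this affects only the constant. Second, your appeal to Lemma~\ref{L:1} at the origin of the $w_0$-dependent graph is slightly delicate, since that lemma gives the expansion at almost every point of a \emph{fixed} function, whereas here the graphing function itself varies with $w_0$. The cleanest fix is to bypass \eqref{E:(1.5)} and, as the paper does, use the intrinsic expansion \eqref{E:(1.1)} with $F=\rho$: this gives $\rho(h',0)=\tfrac12\inl\nabla^2_{T^\C}\rho(w_0)h',h'\inr+o(|h'|^2)$ at your chosen $w_0$ independently of coordinates, and since $\rho(h',0)=-f_1(0)\,\varphi(h',0)+O(|h'|^3)$ (from the Lipschitz bound on $\partial_{y_n}\rho$ and $\varphi(h',0)=O(|h'|^2)$), your inequality $\varphi(h',0)\gtrsim|h'|^2$ finishes the argument.
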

  
  \begin{proof} The proposition can be proved in three steps.
  
  \noindent {\em Step 1.}\quad We first note that by continuity the inequality \eqref{E:dist-bound}
  extends to all $z\in\overline{\D}$ (with same constant $c'$). It follows that if $z\in\overline{\D}$ and $w\in\bndry\D$, but $z\neq w$, then $z-w\notin \Tcw$. So if $w\in\bndry\D$ and $h\in\Tcw$ with $h\neq 0$, then $z:=w+h\notin \overline{\D}$ and so 
    \begin{equation}\label{E:notinD}
  \rho(w+h)>0\, .
  \end{equation}
  \noindent {\em Step 2.}\quad Next, we consider $\nu_w$, the inward-pointing normal unit vector at $w$. With $w$ fixed and $|h|$ sufficiently small, we claim there is a smallest strictly positive
   $\alpha$, so that 
    $w+h+\alpha\nu_w\in\bndry\D$, that is $\rho(w+h+\alpha\nu_w)=0$, see 
    Figure 1. below.
  \begin{figure}[h!]
 \centering 
 \includegraphics
 {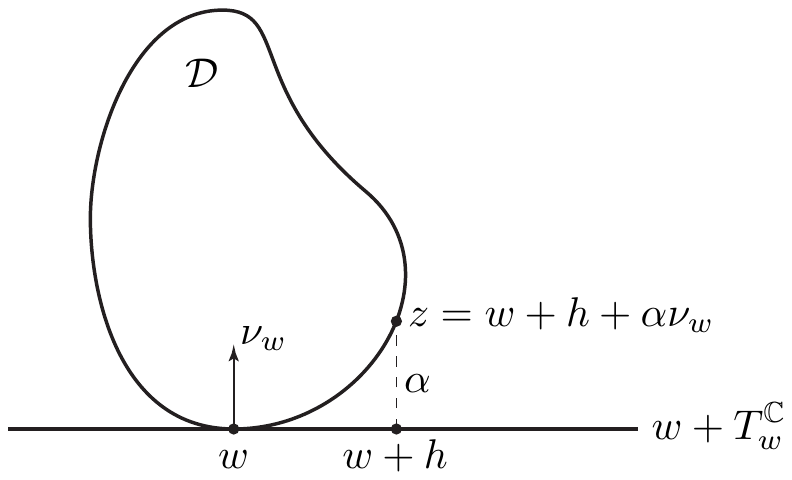}
\caption{}
\label{fig:Figure1}
\end{figure}
  
  Indeed, since $|h|$ is small and $w$ is in 
   $\bndry\D$,  then $w+h+\beta\nu_w$ must be inside of $\D$ if $\beta$ is sufficiently large and positive, hence 
  $\rho(w+h+\beta\nu_w)<0$. Continuity and \eqref{E:notinD} then guarantee the existence of such an $\alpha$. Now if we set $z:= w+h+\alpha\nu_w\in\bndry\D$,
  by the condition $h\in\Tcw$ and the orthogonality of $\nu_w$ to $\Tcw$, we have that
  \begin{equation*}
  \alpha = d^E(z, w+\Tcw)
  \end{equation*}
  and also $|z-w|^2 = \alpha^2 + |h|^2$. Therefore \eqref{E:dist-bound} implies
  \begin{equation}\label{E:alpha-bound}
  \alpha> c'|h|^2\, .
  \end{equation}
  On the other hand, we also have
  \begin{equation*}\label{E:other-bound}
  \alpha = O({|h|})\, .
  \end{equation*}
  To see this, we apply the implicit function theorem (see 
  Appendix II) to write $z$ as above in the form $z = (z', x_n+iy_n)$ with $y_n=\Phi (z', x_n)$ where $\Phi$ is of class $C^{1,1}$ and $\Phi (0', 0)=0$ (recall that $z\in\bndry\D$, so $\rho (z)=0$).

  \noindent{\em Step 3.}\quad 
    First, referring to Proposition \ref{P:1}, 
  if $h\in T^{\C}_w$, we get
  \begin{equation} \label{E:1.1-rho}
  \rho(w+h) =\frac12\inl\nabla^2_{T^{\C}}\rho (w) h, h\inr +o(|h|^2)\quad \mbox{as}\quad h\to 0
  \end{equation}
 for $d\sigma$-almost every $w\in\bndry \D$. This is because $\rho (w)=0$ and $\nabla_T\rho (w) =0$. 
 We now fix a $w$ for which \eqref{E:1.1-rho} holds. Note that
 \begin{equation*}
 \rho (w+h) = -\int\limits_0^\alpha \frac{d}{d t}\left(\rho(w+h+t\nu_w)\right)\, dt
 \end{equation*}
 because $\rho (w+h+\alpha\nu_w) =0$ by Step 2.
 But if $|h|$ is sufficiently small (note that then
  $\alpha$ will also be small), then 
  \begin{equation*}
 \frac{\dee\rho}{\dee\nu_w}(w+h+t\nu_w)= -\,\frac{d}{d t}\left(\rho(w+h+t\nu_w)\right) \geq c_2>0, \ \  \mbox{because}\ \ 
  -\frac{\dee\rho}{\dee\nu_w}(w)\geq c_2'>0
  \end{equation*}
  since $\nabla\rho (w)\neq 0$ for any $w\in\bndry\D$, and $\D=\{\rho<0\}$.
  Thus the integral above shows that 
   $\rho (w+h) \geq c_2\alpha$
     and because of \eqref{E:alpha-bound} we obtain
   \begin{equation*}
   \rho(w+h)\geq c_3|h|^2.
   \end{equation*}
   Combining the latter with \eqref{E:1.1-rho} we have
   \begin{equation}\label{E:conclusion-h-small}
   \inl\nabla^2_{T^{\C}}\rho (w)\, h, h\inr \geq c_4 |h|^2
   \end{equation}
   for all sufficiently small $h\in \Tcw$. Since both sides of \eqref{E:conclusion-h-small} are homogeneous of degree 2, the inequality \eqref{E:conclusion-h-small} extends automatically
   to all $h\in \Tcw$. This proves the conclusion \eqref{E:cplx-tang-hess-positive-def} with $c=c_4$.
  \end{proof}

  We remark that if the domain $\D$ were of class $C^2$ and strongly $\C$-linearly convex, then the positive definiteness given by \eqref{E:cplx-tang-hess-positive-def} would be valid for {\em all} $w\in\bndry\D$: this is implicit in the argument just given. We also note that if we had \eqref{E:conclusion-h-small}, then the condition \eqref{E:Delta-bound} would hold, but only for those $z$ that are sufficiently close to $w$.\\
  
  We next compare strong $\C$-linear convexity to the more standard notion of convexity. If 
  $\D\subset\mathbb R^{2n}$ is a bounded domain of class $C^2$ we say that it is {\em strongly convex} if it is convex and
  \begin{equation}\label{D:strongly-cvx}
  \inl\nabla^2_T\rho (w) H, H\inr\geq c|H|^2\quad \mbox{for any}\ \  H\in T_w\ \  \mbox{and for any}\ \  w\in\bndry\D\, .
  \end{equation}

  One can show that \eqref{D:strongly-cvx} implies
  \begin{equation}\label{E:gradient-bound}
  \inl\nabla\rho (w), w-z\inr\geq c'|w-z|^2\quad \mbox{for any}\ \ z\in \D \ \mbox{and}\ \ w\in\bndry\D\, .
  \end{equation}
     As a consequence, using the above with \eqref{E:pairing-real-im}, \eqref{E:Delta-bound} and \eqref{E:gradient-bound} we see that strong convexty of $\D$ implies strong $\C$-linear convexity.
    
    The converse is not true as can be seen by the following example. Consider the domain $\D$ in $\mathbb C^2$ 
    given
    by 
     $ \rho (z) = |z_1|^2-\mbox{Im}(z_2)$
       which is the half space associated to the Heisenberg group. While $\D$ is unbounded, it is strongly $\C$-linearly convex but not strongly convex. To obtain a bounded variant of $\D$ we may take 
        $ \widetilde\rho (z) = |z_1|^2 -\mbox{Im}(z_2) + |z_2|^4 $
         and note that the (bounded) domain $\D=\{\widetilde\rho (z)<0\}$ is not strongly convex near $w=(0,0)\in\bndry\D$, while it is still strongly $\C$-linearly convex.\\
     
     Next we compare these notions with ``strong pseudo-convexity". Again assuming the bounded domain is of class $C^2$, and choosing a coordinate system $(z', z_n)$ centered at $w\in\bndry\D$ as in Section  \ref{SS:strong C-lin-convex}, the condition of 
     {\em strong pseudo-convexity at $w$} is that the $(n-1)\times(n-1)$ Levi matrix
     \begin{equation*}
     \L =
         \left\{\dee^2_{z_j\overline{z}_k}\rho\, (w)\right\}_{1\leq\, j,\, k\,\leq n-1}
     \end{equation*}
     satisfies
     \begin{equation}\label{E:levi-positive}
     \inl\L\, h, h\inrc\geq c'|h|^2,\quad \mbox{for some}\ c'>0\ \ \mbox{and for any}\ h\in\Tcw\, .
     \end{equation}
          
     We note that \eqref{E:levi-positive} is unchanged under a unitary change of the coordinates 
     $z'\in\mathbb C^{n-1}$, which was the freedom in the choice of coordinates we allowed. If we make an appropriate unitary change of this kind, we may assume that the matrix $\L$ has been
      diagonalized, so that 
\begin{equation*}
           \dee^2_{z_jz_j}\rho\, (w)
      =\lambda_j,\quad \mbox{and}\ \ 
           \dee^2_{z_jz_k}\rho\, (w)
      =0\quad \mbox{if}\ \ j\neq k\, .
      \end{equation*}
      The $\{\lambda_j\}$ are the eigenvalues of $\L$ and the condition \eqref{E:levi-positive} of
      strong pseudo-convexity is then equivalent with $\lambda_j\geq c'$, for $1\leq j\leq n-1$. However
\begin{equation}\label{E:formula-1}
      \lambda_j= 
           \dee^2_{z_jz_j}\rho\, (w) = \frac14
           \left(\dee^2_{x_jx_j}\rho + \dee^2_{y_jy_j}\rho\right)\! (w)\, ,
       \end{equation}
   so if   $\D$ is strongly $\C$-linearly convex it follows from \eqref{E:cplx-tang-hess-positive-def} that
      \begin{equation}\label{E:formula-2}
           \dee^2_{x_jx_j}\rho\,(w)
      \geq c\quad \mbox{and}\quad 
           \dee^2_{y_jy_j}\rho\, (w)
      \geq c\quad \mbox{for}\ \ 1\leq j\leq n-1
      \end{equation}
      and therefore $\lambda_j\geq c/2$. Hence such a $\D$ is 
            strongly pseudo-convex, with
      \eqref{E:levi-positive} holding for $c'=c/2$. One can see that the reverse implication fails. The (unbounded) domain $\D\subset\mathbb C^2$ given by $\rho (z) = 2x_1^2-y_1^2- y_2$
      does not satisfy  condition \eqref{E:Delta-bound}
  at $w_0=(0,0)\in\bndry\D$ because $\Delta(w_0, z) = -z_2/2$ for any $z$, and one may choose
 $z=(y_1, 0)\in\D$ to see that \eqref{E:Delta-bound} fails; on the other hand, $\D$ is easily seen to satisfy 
 \eqref{E:levi-positive} at $w_0$.  
     A bounded domain representing this phenomenon is given by the defining function
           $\widetilde\rho (z) = 2x_1^2 -y_1^2 + |z_1|^4 + |z_2|^2 - y_2$.
           To summarize, we have obtained the following.
      \begin{proposition}\label{P:notions-of-convexity} Suppose $\D\subset \Cn$ is a bounded domain of class $C^2$. 
      Then we have the implications: $\D$ strongly convex $\Rightarrow$ $\D$ strongly $\C$-linearly convex 
      $\Rightarrow$ $\D$ strongly pseudo-convex.
       None of these implications can, in general, be reversed.
      \end{proposition}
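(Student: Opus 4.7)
The plan is to establish the two implications by assembling identities already developed in the text, and then verify non-reversibility with the two bounded examples suggested in the excerpt.

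For the first implication, strongly convex $\Rightarrow$ strongly $\C$-linearly convex, the key intermediate step is the gradient bound \eqref{E:gradient-bound}; once that is established, the assertion follows immediately because \eqref{E:pairing-real-im} gives $\mathrm{Re}\,\Delta(w,z)=\tfrac{1}{2}\inl\nabla\rho(w),w-z\inr$, whence $|\Delta(w,z)|\geq \tfrac{c'}{2}|w-z|^2$, which is \eqref{E:Delta-bound}. To derive \eqref{E:gradient-bound} from \eqref{D:strongly-cvx}, I would argue in two regimes. Locally (for $|z-w|$ small), Taylor's theorem together with $\rho(w)=0$ and $\rho(z)\leq 0$ gives
\begin{equation*}
\inl\nabla\rho(w),w-z\inr\,\geq\, \tfrac{1}{2}\inl\nabla^2\rho(w)(z-w),z-w\inr+o(|z-w|^2).
\end{equation*}
Decomposing $z-w=H+\alpha\nu_w$ with $H\in T_w$ and $\alpha\in\mathbb R$, and using \eqref{D:strongly-cvx} for the $H$ component together with convexity to control $\alpha$, yields the quadratic lower bound locally. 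Globally (for $|z-w|\geq\epsilon$), strong convexity forces the supporting hyperplane $w+T_w$ to meet $\overline\D$ only at $w$, so continuity and compactness of the set $\{(w,z):w\in\bndry\D,\,z\in\overline\D,\,|z-w|\geq\epsilon\}$ give a uniform positive lower bound for $\inl\nabla\rho(w),w-z\inr$.

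For the second implication, strongly $\C$-linearly convex $\Rightarrow$ strongly pseudo-convex, I would invoke Proposition \ref{P:3.1}, using that the $C^2$ hypothesis upgrades the a.e.\ conclusion to a pointwise everywhere statement on $\bndry\D$. Passing to the distinguished coordinates centered at $w$ (so that $\Tcw=\{(z',0):z'\in\mathbb C^{n-1}\}$), testing \eqref{E:cplx-tang-hess-positive-def} on the real unit vectors $e_j$ (along $x_j$) and $ie_j$ (along $y_j$) for $1\leq j\leq n-1$, both of which lie in $\Tcw$, gives \eqref{E:formula-2}. The identity \eqref{E:formula-1} then produces $\lambda_j\geq c/2$ for every eigenvalue of the Levi matrix, which is the required condition \eqref{E:levi-positive}.

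To show the two implications cannot be reversed, I would analyze the bounded examples suggested in the excerpt. For strongly $\C$-linearly convex but not strongly convex, take $\widetilde\rho(z)=|z_1|^2-\mathrm{Im}(z_2)+|z_2|^4$; at $w_0=0$ one computes $\nabla\widetilde\rho(0)=(0,0,0,-1)$ and $\nabla^2\widetilde\rho(0)=\mathrm{diag}(2,2,0,0)$, so $\nabla^2_T\widetilde\rho(0)$ vanishes along the $x_2$-direction in $T_{w_0}$, violating \eqref{D:strongly-cvx}, whereas $T^{\C}_{w_0}=\{(v_1,0)\}$ and $\nabla^2_{T^{\C}}\widetilde\rho(0)$ has eigenvalue $2$; the remaining boundary points are handled by continuity and the $|z_2|^4$ term. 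For strongly pseudo-convex but not strongly $\C$-linearly convex, take $\widetilde\rho(z)=2x_1^2-y_1^2+|z_1|^4+|z_2|^2-y_2$; the only nontrivial Levi entry at any nearby boundary point satisfies $\dee^2_{z_1\bar z_1}\widetilde\rho=\tfrac{1}{4}(\dee^2_{x_1x_1}+\dee^2_{y_1y_1})\widetilde\rho=\tfrac{1}{2}+O(|z_1|^2)$, confirming \eqref{E:levi-positive}; on the other hand, at $w_0=0$ one computes $\dee\widetilde\rho(w_0)=(0,i/2)$, hence $\Delta(w_0,z)=-iz_2/2$, and the point $z=(iy_1,0)$ with $0<|y_1|<1$ lies in $\D$ (since $\widetilde\rho(iy_1,0)=y_1^2(y_1^2-1)<0$) with $|\Delta(w_0,z)|=0$ but $|z-w_0|^2=y_1^2>0$, contradicting \eqref{E:Delta-bound}.

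The main obstacle in this plan is the passage from \eqref{D:strongly-cvx} to \eqref{E:gradient-bound}, which requires carefully coupling the local Taylor expansion (with the decomposition into tangential and normal components) to a compactness/convexity argument that rules out tangential intersections away from the base point; the other implication and the verification of the counterexamples are essentially direct computations once Proposition \ref{P:3.1} and the formulas \eqref{E:pairing-real-im}, \eqref{E:formula-1} are in hand.
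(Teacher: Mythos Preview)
Your proposal is correct and follows essentially the same route as the paper: both implications are obtained by combining \eqref{E:pairing-real-im} with \eqref{E:gradient-bound} (first implication) and Proposition~\ref{P:3.1} with \eqref{E:formula-1}--\eqref{E:formula-2} (second implication), and the same two bounded examples are used for non-reversibility. In fact you supply more detail than the paper does: the text simply asserts that \eqref{D:strongly-cvx} implies \eqref{E:gradient-bound} without argument, whereas you sketch the local Taylor expansion plus global compactness needed to justify it; your verification of the two counterexamples is likewise more explicit than the paper's, which merely states the defining functions.
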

      
      \subsection{The quasi-metric $\d$ on $\bndry \D$.}\label{SS:quasi-metric}
      We return to the case when $\D$ is a bounded domain of class $C^{1,1}$ that is 
      strongly $\C$-linearly convex. On $\bndry \D$ we consider
      \begin{equation}\label{E:dist-def}
      \d (w, z) = |\Delta(w, z)|^{\frac12} = |\langle\dee\rho (w), w-z\rangle|^{\frac12}
      \end{equation}
      restricted now to $z$ and $w$ in $\bndry\D$. We show that $\d$ satisfies the properties 
      of a quasi-metric as given in the following.
      \begin{proposition}\label{P:quasi-metric} Suppose that $\D$ is a bounded, strongly $\C$-linearly convex domain of class $C^{1,1}$. Then, for $\d$ defined as above we have
      \begin{itemize}
      \item[(a)] $\d (w, z) = 0\iff w=z$
      \smallskip
      
      \item[(b)] $\d (w, z) \ \approx\ \d(\z, w)$
      \smallskip
      
      \item[(c)] $\d (w, z)\ \lesssim\ \d(w, z') + \d(z', z)$
      \end{itemize}
      for any $w, z, z'\in\bndry\D$.
      \end{proposition}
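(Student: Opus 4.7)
\ The plan is to derive all three properties from two basic ingredients: the lower bound $|\Delta(w,z)|\geq c|w-z|^2$ furnished by strong $\mathbb C$-linear convexity (which, as in Step~1 of the proof of Proposition~\ref{P:3.1}, extends by continuity from $z\in\D$ to $z\in\overline\D$ and therefore applies to all pairs $w,z\in\bndry\D$), together with the Lipschitz estimate $|\dee\rho(w)-\dee\rho(z)|\leq M|w-z|$ that is automatic from the $C^{1,1}$ hypothesis on $\rho$. Property~(a) is then immediate: $\d(w,w)=0$ is trivial, and conversely $\d(w,z)=0$ forces $c|w-z|^2\leq|\Delta(w,z)|=0$, so $w=z$.

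For~(b) I would build everything around the algebraic identity
\begin{equation*}
\Delta(w,z)+\Delta(z,w)=\langle \dee\rho(w)-\dee\rho(z),\,w-z\rangle,
\end{equation*}
which follows at once from the definition of $\Delta$. The Lipschitz bound gives $|\Delta(w,z)+\Delta(z,w)|\leq M|w-z|^2$, and the lower bound then upgrades the right-hand side to $(M/c)|\Delta(w,z)|$. A triangle inequality yields $|\Delta(z,w)|\leq(1+M/c)|\Delta(w,z)|$, and interchanging the roles of $w$ and $z$ gives the reverse inequality; taking square roots produces~(b).

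For~(c) the parallel identity, obtained by writing $w-z=(w-z')+(z'-z)$ and adding and subtracting $\dee\rho(z')$ in the second summand, is
\begin{equation*}
\Delta(w,z)=\Delta(w,z')+\Delta(z',z)+\langle \dee\rho(w)-\dee\rho(z'),\,z'-z\rangle.
\end{equation*}
The remainder is bounded by $M|w-z'|\cdot|z'-z|$, and converting $|w-z'|$ and $|z'-z|$ into $\d$ via the lower bound yields $|\Delta(w,z)|\lesssim\d(w,z')^2+\d(z',z)^2+\d(w,z')\d(z',z)\lesssim(\d(w,z')+\d(z',z))^2$; extracting square roots delivers the quasi-triangle inequality.

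The only mildly delicate point in the plan is the extension of~\eqref{E:Delta-bound} from $\D$ to $\overline\D$, which has already been carried out during the proof of Proposition~\ref{P:3.1}. Once that is in hand, (a)--(c) reduce to two one-line algebraic identities combined with the Lipschitz estimate on $\dee\rho$, so I do not anticipate any serious obstacle here.
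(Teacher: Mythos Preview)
Your proposal is correct and follows essentially the same route as the paper: both arguments rest on the extension of \eqref{E:Delta-bound} to $\overline\D$, the Lipschitz bound on $\dee\rho$, and the same algebraic decomposition $\Delta(w,z)=\Delta(w,z')+\Delta(z',z)+\langle\dee\rho(w)-\dee\rho(z'),\,z'-z\rangle$ for part~(c). The only cosmetic difference is that for~(b) you phrase the key estimate via the identity for $\Delta(w,z)+\Delta(z,w)$, whereas the paper writes the equivalent bound $\bigl||\Delta(w,z)|-|\Delta(z,w)|\bigr|\lesssim|w-z|^2$ directly; the content is identical.
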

      Here and in the sequel we use the short-hand $X\lesssim Y$ to mean $X\leq cY$, where the constant $c$ is independent of the variables in question. Also $X\approx Y$ means that $X\lesssim Y$ and $Y\lesssim X$.
      
      \begin{proof} Note that by \eqref{E:Delta-bound} we have $|w-z|\lesssim \d (w, z)$ so we get conclusion (a). 
      
      Next, suppose $\d (w, z) =\delta$, then $|\Delta (w, z)|= \delta^2$ and
      $|w-z|\lesssim \delta$. But $|\Delta (z, w)| = |\langle\dee\rho (z), w-z\rangle|$. Hence
       $||\Delta(w, z)|-|\Delta (z, w)|| \lesssim \delta |z-w|\lesssim \delta^2$, because 
       $|\dee\rho (z) -\dee\rho (w)|\lesssim |z-w|$ since $\rho$ is of class $C^{1,1}$. Thus 
       $|\Delta (z, w)|\lesssim \delta^2$ and (b) is  established. 
  
  To prove (c) it suffices to observe that, by \eqref{E:dist-def},
      \begin{equation*}
    \d^2(w, z)
    =|\langle\dee\rho(w), w-z \rangle|
     \leq |\langle\dee\rho(w) -\dee\rho(z'), z'-z\rangle| + 
     \d^2(z', z) + \d^2(w, z')\, .
     \end{equation*}
     On the other hand we have
    $|\langle\dee\rho(w) -\dee\rho(z'), z'-z\rangle|\leq |\dee\rho (w) - \dee\rho (z')|\,|z'-z|
    \lesssim|w-z'|\,|z'-z|$ and moreover
      $ |w-z'|\,|z'-z|\lesssim \d(w, z')\,\d(z', z)$
      by  \eqref{E:Delta-bound}.
       Combining these inequalities we obtain $\d^2(w, z)\lesssim \big(\d(z', z) + \d(w, z')\big)^{\!2}$,
    proving conclusion (c).
      \end{proof}
         
      A further, useful remark is that
      \begin{equation}\label{E:useful}
      |w - z|\, \lesssim\, \d (w, z)\, \lesssim\, |w-z|^{\frac12}\quad \mbox{for any}\quad w, z\in\bndry\D\, .
      \end{equation}
      
      We have already observed that the left-hand side inequality is due to \eqref{E:Delta-bound},
       the strong $\C$-linear convexity of $\D$. The right-hand side inequality
       follows from the trivial observation that, since $\D$ is bounded, 
       $|\langle\dee\rho (w), w-z\rangle|\lesssim |w-z|$.\\
       
       \subsection{The Leray-Levi measure}\label{SS:Leray-Levi}
      Here again we consider the case when $\D$ is a bounded domain of class $C^{1,1}$ that is 
      strongly $\C$-linearly convex. There is a measure on the boundary of $\D$ that plays a key role in what follows. It is defined via the $(2n-1)$-form 
      \begin{equation*}
      \frac{1}{(2\pi i)^n}\,\dee\rho \wedge(\deebar\dee\rho)^{n-1}\, .
      \end{equation*}
      
      In fact, when $\rho$ is of class $C^{1,1}$, we have shown in Section  \ref{SS:1.8} how to define the $2$-form $j^*(\deebar\dee\rho)$ on $\bndry\D$ which has coefficients that are
      bounded measurable functions. From it, by \eqref{E:(1.12)} we have the linear functional
      
      \begin{equation}\label{E:linear-functional}
      f\ \longmapsto\ \frac{1}{(2\pi i)^n}\int\limits_{w\in\bndry\D}\!\!\!\!\!
      f(w)\, j^*\big(\dee\rho \wedge\!(\deebar\dee\rho)^{n-1})(w) \ =:\ 
      \int\limits_{w\in \bndry\D}\!\!\!\!f(w)\, d\l (w)
      \end{equation}
defined for $f\in C(\bndry\D)$, and this defines the measure $d\l$. Indeed, the
$L^\infty (\bndry\D)$ character of $j^*(\deebar\dee\rho)$ makes \eqref{E:linear-functional} a bounded linear functional on $C(\bndry\D)$, which determines $d\l$. We refer to this measure
as the {\em Leray-Levi measure}.

\begin{proposition}\label{P:Leray-Levi-surface}
If the $C^{1,1}$ domain $\D$ is strongly $\C$-linearly convex, then $d\l$ is equivalent to the induced Lebesgue measure $d\sigma$ on $\bndry\D$ in the following sense. We have
\begin{equation*}
d\l (w) = \La (w)\, d\sigma (w) \quad \mbox{for}\ \sigma\mbox{-a.e.}\ w\in\bndry\D
\end{equation*}
and there are two strictly positive constants $c_1$ and $c_2$ so that
\begin{equation}\label{E:meas-equiv}
c_1\leq \La (w)\leq c_2 \quad \mbox{for}\ \sigma\mbox{-a.e.}\ w\in\bndry\D
\end{equation}
\end{proposition}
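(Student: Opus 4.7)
My plan is to compute the density $\La$ pointwise $\sigma$-a.e.\ in the adapted coordinates of Section \ref{SS:strong C-lin-convex} and then to bound it uniformly above and below. The upper bound is essentially immediate from Proposition \ref{P:3}: since $j^*(\deebar\dee\rho)$ has $L^\infty(\bndry\D)$ coefficients, the $(2n-1)$-form $j^*(\dee\rho\wedge(\deebar\dee\rho)^{n-1})$ is a bounded multiple of $d\sigma$, so the genuine task is to identify $\La$ explicitly and produce the positive lower bound.

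Fix $w\in\bndry\D$ at which $j^*(\deebar\dee\rho)(w)$ is well-defined and \eqref{E:cplx-tang-hess-positive-def} holds; by Propositions \ref{P:3} and \ref{P:3.1} the set of such $w$ has full $\sigma$-measure. Introduce adapted coordinates at $w$. Then $\dee\rho(w)$ has only a $dz_n$-component, proportional to $|\nabla\rho(w)|$, while $j^*(dz_n)(w)=dx_n$ because $\nabla\varphi(0)=0$ in these coordinates. The $(1,1)$-form $j^*(\deebar\dee\rho)(w)$, restricted to $\Tcw$, is represented by the Levi matrix $\L=(\rho_{z_j\bar z_k}(w))_{1\leq j,k\leq n-1}$. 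When one wedges the multiple of $dx_n$ from $j^*(\dee\rho)(w)$ against the $(n-1)$-st power of $j^*(\deebar\dee\rho)(w)$, any contribution from the latter involving a $dz_n$ or $d\bar z_n$ factor is killed, so only the purely complex-tangential $(n-1)$-st power contributes. A direct algebraic expansion, together with the identity $dx_j\wedge dy_j=(i/2)dz_j\wedge d\bar z_j$, then yields $\La(w)=C_n\,|\nabla\rho(w)|\,\det\L$ for an explicit dimensional constant $C_n>0$.

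With this formula in hand the remaining bounds are straightforward. Since $\D$ is bounded and $|\nabla\rho|$ is continuous and strictly positive on the compact set $\bndry\D$, $|\nabla\rho|$ lies between two positive constants. The upper bound $|\det\L|\leq C_1$ follows from the $L^\infty(\bndry\D)$-boundedness of the Levi entries furnished by Proposition \ref{P:3}. The crucial lower bound $\det\L\geq c_1>0$ is where strong $\C$-linear convexity enters: Proposition \ref{P:3.1} states that $\nabla^2_{T^\C}\rho(w)$ is uniformly positive definite a.e., and the argument at the end of Section \ref{SS:more-C-linear} (see \eqref{E:formula-1}--\eqref{E:formula-2}) shows this forces the eigenvalues $\lambda_j$ of $\L$ to satisfy $\lambda_j\geq c/2$ a.e., so $\det\L\geq(c/2)^{n-1}$. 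Combining the estimates proves \eqref{E:meas-equiv}.

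The main subtlety is that the second derivatives of $\rho$ are only $L^\infty$ functions in $\Cn$ and cannot be individually restricted to the measure-zero set $\bndry\D$. This is precisely what the tangential-restriction apparatus of Section \ref{S:C11} is built to overcome: the Levi-matrix entries used above acquire a pointwise meaning $\sigma$-a.e.\ on $\bndry\D$ as coefficients of $j^*(\deebar\dee\rho)$. Equivalently, the identity $\La=C_n\,|\nabla\rho|\,\det\L$ can be verified first for the smooth approximants $\{F_k\}$ of \eqref{E:(1.9)} applied with $F=\rho$, where it is classical, and then passed to the a.e.\ limit via Corollary \ref{C:2}.
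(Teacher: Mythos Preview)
Your proposal is correct and follows essentially the same route as the paper: identify $\La(w)=C_n\,|\nabla\rho(w)|\,\det\L$ via a pointwise computation in the adapted coordinates (the paper outsources this identity to \cite[Lemma VII.3.9]{R} for $C^2$ data, whereas you sketch the wedge-product computation directly), justify it for $C^{1,1}$ data by applying it to the smooth approximants $\{F_k\}$ of Proposition~\ref{P:2} and passing to the a.e.\ limit, and then read off the bounds from $|\nabla\rho|\approx 1$, the $L^\infty$ control on the Levi entries, and the eigenvalue inequality $\lambda_j\geq c/2$ coming from Proposition~\ref{P:3.1} via \eqref{E:formula-1}--\eqref{E:formula-2}. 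The only cosmetic difference is that the paper cites conclusion \emph{(3)} of Proposition~\ref{P:2} rather than Corollary~\ref{C:2} for the a.e.\ convergence; either works here since the relevant second derivatives are tangential.
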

\noindent We point out that while the Leray-Levi measure is equivalent to the induced Lebesgue measure in the sense just described, the latter is more intrinsic than the former, in that the induced Lebesgue measure does not depend on the choice of defining function whereas the Leray-Levi measure does. Although not invariant, the Leray-Levi measure is a crucial factor 
in the Cauchy-Leray integral which, as a whole, is invariant, as we shall see in Section \ref{SS:indep-def-funct}.
\begin{proof}
We need the following fact. Suppose $F$ is any real-valued function of class $C^2$ 
defined on $\Cn$. For any $w\in\bndry\D$ denote by ${\tt{det}}(F)(w)$ the determinant of the
$(n-1)\times (n-1)$ Hermitian matrix 
\begin{equation*}
\left\{\frac{\dee^2F (\cdot) }{\dee z_j\dee\overline{z}_k}\right\}_{\!\! w},\quad 1\leq j, k\leq n-1
\end{equation*}
computed in the coordinate system $(z', z_n)$ centered at $w$, which was used above. (Note that by unitary invariance, the value of this determinant does not depend on the particular choice of the special coordinates that are used.) We can then assert that 
\begin{equation}\label{E:det-identity}
j^*\big(\dee \rho \wedge\!(\deebar\dee F)^{n-1})(w) = \gamma_n\, {\tt{det}}(F)(w)|\nabla\rho (w)|\, d\sigma (w),\quad \mbox{for any}\quad w\in\bndry\D\, ,
\end{equation}
where $\gamma_n = (n-1)!/4\pi^n$. This computation is implicit in \cite[Lemma VII.3.9]{R} where
it is given explicitly for $F=\rho$ when $\D=\{\rho<0\}$ is of class $C^2$. To pass to the case when
 $\D$ is of class $C^{1, 1}$, 
we apply \eqref{E:det-identity} to the approximating sequence $\{F_k\}$ given by Proposition \ref{P:2} with $F$ there taken to be $\rho$. Now the left-hand side of \eqref{E:det-identity}, with $F_k$ in place of $F$ clearly converges for $\sigma$-a.e.
 $w\in\bndry\D$ to the left-hand side of \eqref{E:det-identity}, by conclusion {\em (3)} of Proposition \ref{P:2}. Moreover, by conclusion {\em (1)} of that proposition, this convergence is in particular bounded, and hence gives weak convergence of the corresponding sequence of measures. As a result, 
 \begin{equation*}
 d\l = \frac{1}{(2\pi i)^n}\,j^*\!\left(\dee\rho \wedge(\deebar\dee\rho)^{n-1}\right)(w) =
 \gamma_n\,{\tt{det}}(\rho)(w)\cdot|\nabla\rho (w)|\, d\sigma (w)\ \  \mbox{for}\ \ \sigma\mbox{-a.e.}\ w\in\bndry\D.
 \end{equation*}
 In computing ${\tt{det}}(\rho)(w)$, we can further specify our coordinates at $w$ so that
 $\{\dee^2\rho/\dee z_j\dee\overline{z}_k\}$ is diagonal. Recalling \eqref{E:formula-1} and
 \eqref{E:formula-2} and the comments thereafter, we see that
 \begin{equation*}
 {\tt{det}}(\rho)(w)\geq \left(\frac{c}{2}\right)^n
 \end{equation*}
 where $c$ is the constant of $\C$-linear convexity \eqref{E:Delta-bound}. Of course this determinant is clearly bounded above, since all the matrix entries are bounded. This, together 
 with the fact that $|\nabla\rho (w)|\approx 1$ for $w\in\bndry\D$, allows us to conclude the proof of \eqref{E:meas-equiv}.
\end{proof}
The reader may note that this conclusion still holds if $\D$ is of class $C^2$, when one makes the weaker geometric assumption that $\D$ is strongly pseudo-convex. We will return to this point in the forthcoming paper \cite{LS-3}.

\subsection{Some
 estimates}\label{SS:integral-estimates} 
We let $\B_r(w)$ be the ball in $\bndry\D$ given by
\begin{equation*}
\B_r(w) =\{ z\in\bndry \D\ :\  \d (w, z)<r\}
\end{equation*}
where $\D$ is as above.
\begin{proposition}\label{P:surface-balls} 
 We have
\begin{equation*}
\l (\B_r(w))\ \approx\  r^{2n}\quad\  \mbox{for}\quad 0<r\leq 1
\end{equation*}
for any $w\in\bndry\D$.
\end{proposition}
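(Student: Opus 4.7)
The plan is to combine Proposition~\ref{P:Leray-Levi-surface} with a direct computation of $\sigma(\B_r(w))$ in the special coordinate system at $w$ introduced in Section~\ref{SS:strong C-lin-convex}. Since $d\l \approx d\sigma$ by \eqref{E:meas-equiv}, it suffices to prove $\sigma(\B_r(w)) \approx r^{2n}$ with constants uniform in $w \in \bndry\D$.

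Fix $w \in \bndry\D$ and pass to the coordinates $(z', z_n)$ centered at $w$, so $w$ corresponds to the origin and \eqref{E:Delta-repr} gives $|z_n| = 2|\Delta(w,z)|/|\nabla\rho(w)|$. By the quantitative implicit function theorem for $C^{1,1}$ functions (Appendix II), there is a uniform neighborhood of the origin in which $\bndry\D$ is the graph $y_n = \Phi(z', x_n)$ with $\Phi \in C^{1,1}$, $\Phi(0', 0) = 0$, and $\nabla\Phi(0', 0) = 0$; thus $|\Phi(z', x_n)| \leq C(|z'|^2 + x_n^2)$ uniformly in $w$. The induced surface measure $d\sigma$ on $\bndry\D$ in the $(z', x_n)$ parametrization is comparable to $(2n-1)$-dimensional Lebesgue measure, with constants depending only on $\|\nabla\Phi\|$.

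For the upper bound, suppose $z \in \B_r(w)$ and $0 < r \leq 1$. Then $|\Delta(w,z)| < r^2$ forces $|z_n| \lesssim r^2$, hence $|x_n| \leq |z_n| \lesssim r^2$. From strong $\C$-linear convexity \eqref{E:Delta-bound}, we have $|w - z| \lesssim |\Delta(w,z)|^{1/2} < r$, so $|z'| \lesssim r$. Therefore $\B_r(w)$ is contained (in the $(z', x_n)$ parameters) in the set $\{|z'| \lesssim r,\ |x_n| \lesssim r^2\}$, whose Lebesgue measure is $\lesssim r^{2(n-1)} \cdot r^2 = r^{2n}$. For the lower bound, pick a small $\epsilon > 0$ (independent of $w$) and consider the box $E_\epsilon = \{(z', x_n) : |z'| \leq \epsilon r,\ |x_n| \leq \epsilon r^2\}$. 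For such $(z', x_n)$,
\begin{equation*}
|z_n| \leq |x_n| + |\Phi(z', x_n)| \leq \epsilon r^2 + C(\epsilon^2 r^2 + \epsilon^2 r^4) \leq (\epsilon + 2C\epsilon^2)\, r^2,
\end{equation*}
using $r \leq 1$. Hence $\d(w,z)^2 = \tfrac{1}{2}|\nabla\rho(w)|\,|z_n| \lesssim \epsilon\, r^2$, so $\d(w,z) < r$ once $\epsilon$ is chosen small enough. The set $E_\epsilon$ has Lebesgue measure $\approx \epsilon^{2n}\, r^{2n}$, proving $\sigma(\B_r(w)) \gtrsim r^{2n}$.

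The main obstacle is securing uniformity of all constants in $w$: the radius on which the graph representation $y_n = \Phi(z', x_n)$ is valid, the quadratic bound on $\Phi$, and the constant $|\nabla\rho(w)|$ must all be controlled independently of $w \in \bndry\D$. This uniformity follows from the quantitative implicit function theorem (Appendix II) applied to the $C^{1,1}$ defining function $\rho$ together with compactness of $\bndry\D$ (so $|\nabla\rho| \approx 1$ uniformly). Once these uniform constants are in hand, the two bounds above combine with Proposition~\ref{P:Leray-Levi-surface} to yield $\l(\B_r(w)) \approx r^{2n}$.
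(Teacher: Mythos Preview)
Your argument is correct and follows essentially the same route as the paper: reduce to $\sigma$ via Proposition~\ref{P:Leray-Levi-surface}, work in the special coordinates at $w$, and sandwich $\B_r(w)$ between two anisotropic boxes in $(z',x_n)$ of the right size using strong $\C$-linear convexity for one inclusion and the quadratic bound on the graph function $\Phi$ for the other. One small point you skipped: the graph representation $y_n=\Phi(z',x_n)$ and the comparability $d\sigma\approx d(z',x_n)$ are only valid on a fixed-size neighborhood of $w$, so your upper bound as written only covers $0<r\le a$ for some uniform $a>0$; you should add the trivial remark (which the paper makes explicitly) that for $a\le r\le 1$ one has $\sigma(\B_r(w))\le\sigma(\bndry\D)\approx 1\approx r^{2n}$ and $\sigma(\B_r(w))\ge\sigma(\B_a(w))\approx a^{2n}\approx r^{2n}$.
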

\begin{proof}
By the equivalence in Proposition \ref{P:Leray-Levi-surface}, the conclusion is the same as 
$\sigma((\B_r(w))\approx r^{2n}$, $0<r\leq 1$, with $\sigma$ the induced Lebesgue measure on
$\bndry \D$.
We begin by observing that it suffices to prove the assertion
 when $0<r\leq a$, where $a$ is fixed with $0<a\leq 1$. In fact, when $a\leq r\leq 1$, then
\begin{equation*}
\B_a(w)\subset\B_r(w)\subset\B_1(w)
\end{equation*}
and hence $\sigma (\B_r(w))\approx 1$ while also $r^{2n}\approx 1$. To prove 
$\sigma((\B_r(w))\approx r^{2n}$ when $0<r\leq a$, we need to control the shape of the balls $\B_r(w)$, and this is best done in terms of the coordinate system $(\z', \z_n)$ centered at $w$ we have already used several times. Here the key claim is
\begin{equation}\label{E:shape-balls}
\d(w, z)\approx |\z'| + |x_n|^{1/2} \, ,\quad \mbox{when}\ \ \d(w, z)\leq a_1
\end{equation}
where $a_1$ is to be chosen below, and $\z_n=x_n+iy_n$. In fact $\d^2(w, z) = |\Delta (w, z)|$
and by \eqref{E:Delta-repr}, we have
\begin{equation}\label{E:d-estimate}
\d^2(w, z) = c_w|\z_n|
\end{equation}
where $c_w= |\nabla\rho (w)|/2$. However by \eqref{E:Delta-bound}, $\d(w, z)\gtrsim |z-w|\geq |z'|$, since $w=(0,\ldots, 0)$. Therefore 
\begin{equation}\label{E:d-bound-2}
\d(w, z) \gtrsim |z'| + |z_n|^{1/2} \geq |\z'|+|x_n|^{1/2},
\end{equation}
proving one direction of \eqref{E:shape-balls} (without the limitation that $\d (w, z)\lesssim a_1$).
 In the other direction,
  Taylor's formula applied to the $C^{1,1}$ function $\rho$ gives
 \begin{equation*}
 \rho (\z) = \rho (w) + \inl \nabla\rho (w), \z-w\inr + O(|\z-w|^2)\, .
 \end{equation*}
 Since we are taking $\z, w\in\bndry \D$, then $\rho (\z)=\rho (w) =0$, and in our coordinate system we obtain
 \begin{equation*}
 |\nabla\rho (w)|\,y_n = O(|z-w|^2) = O(|\z'|^2 + |\z_n|^2)\, ,
 \end{equation*}
 because 
 $\dee_{z_j}\rho (w)=0, j=1,\ldots, n-1$, 
  $\dee_{x_n}\rho (w)=0$ 
 and
  $\dee_{y_n}\rho (w)= -|\nabla\rho (w)|$. 
 As a result, 
 \begin{equation*}
 |y_n|\leq c(|\z'|^2 + |\z_n|^2)
 \end{equation*}
 and since $|\z_n|\leq |x_n|+|y_n|$ we have, 
 \begin{equation}\label{E:bound-3}
 |\z_n|\leq c(|\z'|^2 + |x_n| + |\z_n|^2)
 \end{equation}
 once we take the precaution of also choosing $c\geq 1$. It is here that we choose $a_1$, 
 with $a_1^2/c_w=1/2c$ where $c_w$ appears in \eqref{E:d-estimate}, because then
 $\d (w, z)\leq a_1$ implies $c_w|\z_n|\leq a^2_1$. Thus $|\z_n|\leq 1/2c$, and this means that
$c|\z_n|^2<|\z_n|/2$, 
and from this and  \eqref{E:bound-3} we get 
 \begin{equation*}
 |\z_n|\leq 2c(|\z'|^2 + |x_n|)\quad \mbox{when}\quad \d(w, z)\leq a_1.
 \end{equation*}
 Combining this inequality with \eqref{E:d-estimate} establishes \eqref{E:shape-balls}. Finally we apply the implicit function theorem (see 
   Appendix II) to write points $z$ near $w$, with $\rho (z) =0$ (that is $z\in\bndry\D$), in the form $y_n=\Phi (\z', x_n)$. This can be done for points $\z$ that lie above 
 $(\z', x_n)\in\mathbb C^{n-1}\times\mathbb R$, for $\d (\z, w) \leq a_2$ for an appropriately small $a_2$,
 and this implies that $|\z'|^2 + |x_n|$ is small with $a_2$, in view of \eqref{E:d-bound-2}. In this situation, as is well-known, we can express the induced Lebesgue measure $d\sigma$ on the graph 
 $\{y_n=\Phi (\z', x_n)\}$ as
 \begin{equation*}
 d\sigma  = A (z', x_n) \,d\m 
 \end{equation*}
where $\displaystyle{d\m  =\left(\prod\limits_{j=1}^{n-1}dx_j\,dy_j\right)\!dx_n}$ is Euclidean measure on $\mathbb C^{n-1}\times \mathbb R$, and

\begin{equation*}
A(\z', z_n) = \frac{|\nabla\rho (\z', \z_n)|}
{\left|\dee_{y_n}\rho\, (z', z_n)\right|}\, .
\end{equation*}

Also
\begin{equation}\label{E:surf-meas-estimate}
A(\z', x_n)\approx 1\, ,
\end{equation}

if $a_2$ is small enough. So we choose $a=\min\{a_1, a_2\}$, and therefore both 
\eqref{E:shape-balls} and \eqref{E:surf-meas-estimate} are valid when $\d(w, z)\leq a$. 
Now clearly, 
$d\m\left(\{|\z'|+|x_n|^{1/2}\leq r\}\right)\approx r^{2n}$, 
  so that $\sigma((\B_r(w))\approx r^{2n}$, 
 whenever $0<r\leq a$. The proposition is therefore proved.
\end{proof}

It is important to observe that the constants that arise in the proposition above, in particular the two constants implicit in the assertion $\l (\B_r(w))\approx r^{2n}$, the constant $a$, etc., can be taken to be independent of $w\in\bndry\D$. This is because, as the reader may verify, all these constants are controlled by two quantities, namely 
\begin{equation*}
M=\|\rho\|_{C^{1,1}},\quad \mbox{and}\quad m=\min\limits_{w\in\bndry\D}|\nabla\rho(w)|.
\end{equation*}

A simple but useful consequence of Proposition \ref{P:surface-balls} is the following set of integral estimates.
\begin{corollary}\label {C:int-ests} With $\D$ as above,
 for any $w\in\bndry\D$ and for $\epsilon>0$, we have

\begin{equation}\label{E:int-est-a}
\int\limits_{z\in \B_r(w)}\!\!\!\!\! \d(w, z)^{-2n +\epsilon}\, d\lambda(\z)\ \leq\ c_\epsilon\, r^\epsilon,\quad \mbox{for}\ \ 0< r\leq 1
\end{equation}

\begin{equation}\label{E:int-est-b}
\int\limits_{z\in \B_r(w)^c}\!\!\!\!\! \d(w, z)^{-2n -\epsilon}\, d\lambda(\z)\ \leq\ c_\epsilon\, r^{-\epsilon},\quad \mbox{for}\ \ 0< r\leq 1
\end{equation}
\begin{equation}\label{E:int-est-c}
\int\limits_{z\in \B_r(w)^c}\!\!\!\!\! \d(w, z)^{-2n}\, d\lambda(\z)\ \leq\ c\, \log(1/r),\quad \mbox{for}\ \ 0< r\leq 1/2
\end{equation}
\end{corollary}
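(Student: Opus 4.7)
The plan is to reduce all three estimates to the volume bound $\l(\B_r(w))\approx r^{2n}$ of Proposition \ref{P:surface-balls} via a standard dyadic-annular decomposition. These are the familiar calculations for spaces of homogeneous type once one has the doubling-type estimate; the only mild care needed is that the volume estimate is stated for $0<r\leq 1$, so we must truncate outer sums at the quasi-metric diameter of $\bndry\D$, which by \eqref{E:useful} is a finite constant $D_0\lesssim 1$.

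For \eqref{E:int-est-a}, I would write
\[
\B_r(w) = \bigcup_{k\ge 0}\{\,z\in\bndry\D\ :\ 2^{-k-1}r<\d(w,z)\le 2^{-k}r\,\},
\]
bound $\d(w,z)^{-2n+\epsilon}$ on the $k$-th annulus by $(2^{-k-1}r)^{-2n+\epsilon}$, and use $\l(\B_{2^{-k}r}(w))\lesssim (2^{-k}r)^{2n}$ from Proposition \ref{P:surface-balls}. The $k$-th term becomes $\lesssim r^{\epsilon}\,2^{-k\epsilon}$ (times a constant depending only on $n$), and summing the geometric series yields $c_\epsilon r^\epsilon$. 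The convergence is precisely the point where $\epsilon>0$ is used.

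For \eqref{E:int-est-b} and \eqref{E:int-est-c} I would decompose
\[
\B_r(w)^c\cap \bndry\D \;=\; \bigcup_{k\ge 0}\{\,z\in\bndry\D\ :\ 2^{k}r<\d(w,z)\le 2^{k+1}r\,\}
\]
and note that only those $k$ with $2^{k}r\le D_0$ contribute, since $\bndry\D$ has finite quasi-metric diameter. On the $k$-th annulus, $\d(w,z)^{-2n-\epsilon}\le (2^{k}r)^{-2n-\epsilon}$ and $\l\big(\B_{2^{k+1}r}(w)\big)\lesssim (2^{k+1}r)^{2n}$, so the $k$-th term is $\lesssim r^{-\epsilon}\,2^{-k\epsilon}$, giving \eqref{E:int-est-b} upon summing in $k$. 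For \eqref{E:int-est-c} the analogous computation replaces the factor $2^{-k\epsilon}$ with $1$, so the sum becomes $\lesssim$ the number of dyadic scales between $r$ and $D_0$, namely $O(\log(1/r))$ for $0<r\le 1/2$.

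There is essentially no obstacle: the only technical point is to ensure, for large $k$ in the outer sums, that we only count annuli that actually meet $\bndry\D$, which we handle via \eqref{E:useful}; and, for $r$ close to $1$ in \eqref{E:int-est-a}, that the dyadic sum is compatible with the upper-bound restriction $r\le 1$ in Proposition \ref{P:surface-balls}, which is automatic since the radii $2^{-k}r$ appearing there are $\le r\le 1$. Thus the proof is a direct packaging of Proposition \ref{P:surface-balls} via dyadic annuli.
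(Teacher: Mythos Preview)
Your proposal is correct and follows essentially the same approach as the paper: a dyadic annular decomposition together with the volume bound $\lambda(\B_r(w))\approx r^{2n}$ from Proposition~\ref{P:surface-balls}. The paper carries out only the case \eqref{E:int-est-a} in detail and leaves \eqref{E:int-est-b} and \eqref{E:int-est-c} to the reader, whereas you supply the outer decomposition and the diameter truncation explicitly; this is a welcome addition rather than a departure.
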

\begin{proof}
For the proof of \eqref{E:int-est-a}, write $\B_r(w)$ as a disjoint union
 \begin{equation*}
 \B_r(w) =\bigcup\limits_{k=1}^\infty\bigg(\B_{r2^{-n}}(w)\setminus \B_{r2{-n-1}}(w)\bigg)
 \end{equation*}
 Then the integral is  majorized by  a multiple of
 \begin{equation*}
 \sum\limits_{k=0}^\infty \,(r2^{-k-1})^{-2n +\epsilon}\,\l (\B_{r2^{-n}}(w))\
 \lesssim\ r^{-2n +\epsilon}r^{2n}\sum\limits_{k=0}^\infty\, 2^{-k(-2n+\epsilon)}\cdot 2^{-2kn}
  \end{equation*}
 \begin{equation*}
=\ \, r^\epsilon\sum\limits_{k=0}^\infty \,2^{-k\epsilon} \ =\  c_\epsilon r^\epsilon\, 
 \end{equation*}
 The proofs of \eqref{E:int-est-b} and \eqref{E:int-est-c} are similar and may be left to the reader.
\end{proof}

\section{The Cauchy-Leray integral}\label{S:CLI}
We recall that we are dealing with a bounded domain $\D$ in $\Cn$, with a defining function 
$\rho$ that is of class $C^{1,1}$. We also suppose that $\D$ is strongly $\C$-linearly convex, which means that $\Delta (w, z) =\langle\dee\rho (w), w-z\rangle$ satisfies the two equivalent conditions \eqref{E:Delta-bound} and \eqref{E:dist-bound}. 

The {\em Cauchy-Leray integral} of a suitable function $f$ on $\bndry\D$, denoted $\CI (f)$, is formally defined by
\begin{equation}\label{E:CLI-def-0}
\CI (f) (z) =\frac{1}{(2\pi i)^n}\int\limits_{w\in\bndry\D}\!\!\!\frac{f(w)}{\Delta (w, z)^n}
\dee\rho (w) \wedge (\deebar\dee\rho (w))^{n-1},\quad z\in \D\, .
\end{equation}
The actual definition is as follows. Recall the Leray-Levi measure
\begin{equation*}
d\l(w) = j^*\left(\frac{\dee\rho (w) \wedge (\deebar\dee\rho (w))^{n-1}}{(2\pi i)^n}\right) =
\Lambda (w) d\sigma (w),\quad w\in\bndry\D
\end{equation*}
given in Section \ref{SS:Leray-Levi} via Proposition \ref{P:Leray-Levi-surface}, with $d\sigma$ the induced Lebesgue measure on $\bndry\D$ and $\Lambda (w)\approx 1$, see 
\eqref{E:meas-equiv}. Then whenever $f$ is integrable with respect to $d\sigma$ (or what is the same, with respect to $d\l$), the precise form of \eqref{E:CLI-def-0} is the function $\CI (f)$, defined for $z\in\D$ by
\begin{equation}\label{E:CLI-def}
\CI (f) (z) = \int\limits_{w\in\bndry\D}\!\!\! \frac{f(w)\, d\l (w)}{\Delta (w, z)^n}\, .
\end{equation}
The purpose of this section is to prove two basic propositions about the Cauchy-Leray integral \eqref{E:CLI-def}. The first shows that despite the fact that the definition \eqref{E:CLI-def} seems to depend on the particular choice of the defining function $\rho$ 
that is used, the integral is in fact independent of the choice of $\rho$, and so it depends only on the domain $\D$. The second proposition shows that in addition to the fact that the integral \eqref{E:CLI-def} always produces holomorphic functions (because $\Delta (w, z)$ is holomorphic in $z\in\D$ for each fixed $w\in\bndry \D$), in fact it reproduces holomorphic functions. Hence it has every right to
be called a ``Cauchy integral''.

\subsection{Independence of the choice of defining function.}\label{SS:indep-def-funct}
Suppose $\rho'$ is another $C^{1,1}$ defining function of the domain $\D$, and write 
$d\l '(w) = j^*\left(\dee\rho' (w) \wedge (\deebar\dee\rho' (w))^{n-1}\right)\!\!/(2\pi i)^n$,
$\Delta'(w, z) = \langle \dee\rho'(w), w-z\rangle$.
\begin{proposition}\label{P:indep-def-funct} We have that
\begin{equation}\label{E:indep-def-funct}
\frac{d\l '(w)}{\Delta' (w, z)^n} = 
\frac{d\l (w)}{\Delta (w, z)^n}
\end{equation}
\end{proposition}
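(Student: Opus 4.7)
The natural route is to exploit the standard fact that any two $C^{1,1}$ defining functions of $\D$ are related by $\rho'=\mu\rho$ for some positive Lipschitz function $\mu$ defined in a neighborhood of $\overline{\D}$ (one reduces to a local graph representation via the implicit function theorem of Appendix II; on the boundary, $\mu$ equals the ratio of inward normal derivatives of $\rho'$ and $\rho$). Then the entire proof is an algebraic manipulation of the forms $\dee\rho\wedge(\deebar\dee\rho)^{n-1}$ coupled with two key vanishings: $\dee\rho\wedge\dee\rho=0$, and the fact that $\rho\equiv 0$ on $\bndry\D$ kills any term in which $\rho$ appears as a scalar factor after pulling back via $j$.

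First I would compute
$$
\dee\rho'=\mu\,\dee\rho+\rho\,\dee\mu,\qquad \deebar\dee\rho'=\mu\,\deebar\dee\rho+\deebar\mu\wedge\dee\rho-\dee\mu\wedge\deebar\rho+\rho\,\deebar\dee\mu.
$$
On $\bndry\D$, the first identity gives immediately $\Delta'(w,z)=\mu(w)\Delta(w,z)$, hence $\Delta'(w,z)^n=\mu(w)^n\Delta(w,z)^n$. For the second identity, apply $j^*$ to drop the $\rho$-multiplied piece; since $j^*(d\rho)=d(j^*\rho)=0$ we have $j^*\deebar\rho=-j^*\dee\rho$, so the two mixed terms combine as $j^*(d\mu)\wedge j^*\dee\rho$. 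Thus
$$
j^*\deebar\dee\rho'=\mu\,j^*\deebar\dee\rho+j^*(d\mu)\wedge j^*\dee\rho=:A+B,
$$
with $A,B$ both of degree $2$ (so they commute under $\wedge$), and $B\wedge B=0$ since it contains $j^*\dee\rho\wedge j^*\dee\rho$. Therefore the binomial expansion truncates to $(A+B)^{n-1}=A^{n-1}+(n-1)A^{n-2}\wedge B$.

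Next I would wedge with $j^*\dee\rho'=\mu\,j^*\dee\rho$. In the term $j^*\dee\rho\wedge A^{n-2}\wedge B$ I commute the leading $j^*\dee\rho$ past the even-degree form $A^{n-2}$ at no sign cost, and past $j^*(d\mu)$ at a sign cost, producing a factor $j^*\dee\rho\wedge j^*\dee\rho=0$. What survives is $\mu\cdot j^*\dee\rho\wedge A^{n-1}=\mu^n\,j^*\dee\rho\wedge(j^*\deebar\dee\rho)^{n-1}$, i.e.
$$
j^*\bigl(\dee\rho'\wedge(\deebar\dee\rho')^{n-1}\bigr)=\mu^{n}\,j^*\bigl(\dee\rho\wedge(\deebar\dee\rho)^{n-1}\bigr),
$$
so $d\l'(w)=\mu(w)^n\,d\l(w)$. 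Combining with $\Delta'^n=\mu^n\Delta^n$ on $\bndry\D$ yields \eqref{E:indep-def-funct}.

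The main obstacle is regularity: $\mu$ is only Lipschitz, so $d\mu$ exists only almost everywhere as an $L^\infty$ form, and $j^*\deebar\dee\rho$ is defined in the bounded-measurable sense of Proposition \ref{P:3}. To justify the algebraic identities rigorously, I would first prove the computation when $\rho,\rho'\in C^2$ (and hence $\mu\in C^{1}$) by pointwise manipulation, then pass to the general $C^{1,1}$ case using the approximating sequence $\{F_k\}$ from Proposition \ref{P:2} applied to $\rho$ and $\rho'$, invoking \eqref{E:(1.12)} and dominated convergence to identify weak limits. The identity of forms on $\bndry\D$ then holds $\sigma$-almost everywhere, which is all that is needed since both sides of \eqref{E:indep-def-funct} are only meant as $L^\infty$-form-valued objects integrated against test functions.
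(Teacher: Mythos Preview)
Your algebraic computation is correct and is essentially the same as the paper's: write $\rho'=\mu\rho$ with $\mu$ Lipschitz and positive near $\bndry\D$, deduce $\Delta'=\mu\,\Delta$ on $\bndry\D$, expand $\deebar\dee\rho'$, kill the $\rho$-multiplied term on $\bndry\D$, and use $j^*\dee\rho\wedge j^*\dee\rho=0$ to reduce $j^*\big(\dee\rho'\wedge(\deebar\dee\rho')^{n-1}\big)$ to $\mu^n\,j^*\big(\dee\rho\wedge(\deebar\dee\rho)^{n-1}\big)$.

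The gap is in your passage to the $C^{1,1}$ case. You propose to approximate $\rho$ and $\rho'$ \emph{separately} by the smooth sequences $\{\rho_k\}$ and $\{\rho'_k\}$ of Proposition~\ref{P:2}, and then take weak limits. But those two sequences are not linked by any relation of the form $\rho'_k=\mu_k\rho_k$, nor do $\rho_k$ and $\rho'_k$ define a common domain; hence the $C^2$ identity you established cannot be applied to the approximants, and there is nothing to pass to the limit. Concretely, nothing in your scheme shows why $j^*\big(\dee\rho'_k\wedge(\deebar\dee\rho'_k)^{n-1}\big)$ and $\mu^n\,j^*\big(\dee\rho_k\wedge(\deebar\dee\rho_k)^{n-1}\big)$ have the same limit.

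The paper resolves this by approximating the two factors \emph{of the product} rather than the two defining functions: take $\rho_k$ from Proposition~\ref{P:2} (for $\rho$) and $a_k$ from Proposition~\ref{P:approx-LF} (for the Lipschitz function $a=\mu$), and work with the explicit product $a_k\rho_k$. Two lemmas are then needed. First (Lemma~\ref{L:indep-def-funct-1}), one shows $j^*\big(\deebar\dee(a_k\rho_k)\big)$ is uniformly bounded and converges a.e.\ to $j^*(\deebar\dee\rho')$; this uses the quantitative rates $\rho_k=O(1/k^2)$, $\nabla\rho_k-\nabla\rho=O(1/k)$ on $\bndry\D$ from \eqref{E:unif-quantified} against $\|\nabla^2 a_k\|=O(k)$ from Proposition~\ref{P:approx-LF}, together with the weak characterization \eqref{E:(1.11)} and $j^*(\dee\rho')=a\,j^*(\dee\rho)$ to identify the limit. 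Second (Lemma~\ref{L:indep-def-funct-2}), one shows $j^*(\dee\rho)\wedge\big(j^*\deebar\dee(a_k\rho_k)\big)^{n-1}\to a^{n-1}\,j^*(\dee\rho)\wedge\big(j^*\deebar\dee\rho\big)^{n-1}$ a.e., using in addition $\dee\rho\wedge\dee\rho_k=\dee\rho\wedge(\dee\rho_k-\dee\rho)=O(1/k)$. Your outline would be complete if you replaced ``Proposition~\ref{P:2} applied to $\rho$ and $\rho'$'' by ``Proposition~\ref{P:2} applied to $\rho$ and Proposition~\ref{P:approx-LF} applied to $\mu$,'' and then carried out these two convergence lemmas.
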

This shows that $\CI$ 
is independent of the choice of defining function.

To prove the proposition, consider first the special case when $\D$ is of class $C^2$, and the connection between $\rho$ and $\rho'$ is given by
$\rho' = a\,\rho$, where $a$ is also of class $C^2$. Then it is easily verified that $j^*(\dee\rho') = a j^*(\dee\rho)$, $\Delta'(w, z) = a \Delta (w, z)$, and 
$$
j^*(\dee\rho)\wedge j^*(\deebar\dee a\rho)^{n-1} = a^{n-1} j^*(\dee\rho)\wedge j^*(\deebar\dee\rho)^{n-1}
$$
from which \eqref{E:indep-def-funct} follows immediately. The general case (when $\rho$ and
$\rho'$ are merely of class $C^{1,1}$) is more subtle.
We begin the proof of 
\eqref{E:indep-def-funct}
by noting first that $\rho' =a \rho$, where $a$ is a Lipschitz function, with $a>0$ near $\bndry\D$. This follows from Remark 2 in Appendix II. Next we observe that 
\begin{equation}\label{E:deriv-lip}
j^*(\dee\rho') = a\, j^*(\dee\rho),
\end{equation}
where both sides of \eqref{E:deriv-lip} are forms with Lipschitz coefficients. In fact
\begin{equation*}
\nabla\rho' = \nabla (a\rho) = a\,\nabla\rho + \rho\,\nabla a
\end{equation*}
where the components of $\nabla a$ (the first-order derivatives of $a$) are taken in the sense of distributions, and are $L^\infty$ functions of the underlying $\mathbb R^{2n}$, and thus are defined and bounded almost everywhere on $\mathbb R^{2n}$. So there is a subset $\D_0\subset\D$, so that $\D\setminus\D_0$ has $2n$-dimensional measure zero, with $|\nabla a (z)|\leq M$, for every $z\in\D_0$. As a result
\begin{equation}\label{E:diff-quot-lip}
|\nabla\rho'(z)-a(z)\nabla\rho (z)|\leq M|\rho (z)|,\quad \mbox{if}\ z\in\D_0.
\end{equation}
Since $\D_0$ is dense in $\overline\D$, then for each $w\in\bndry\D$ there is a sequence $\{z_k\}\subset \D_0$, with $z_k\to w$. So by continuity of the left-hand side of \eqref{E:diff-quot-lip} it
 follows that $(\nabla\rho')(w) = a(w)\,\nabla\rho (w)$, if $w\in\bndry\D$ and hence in particular \eqref{E:deriv-lip} is established. This also shows that
 \begin{equation}\label{E:denom-indep}
 \Delta'(w,z) = 
  a(w)\,\Delta (w,z)\, 
 \end{equation}
 The rest of the proof of \eqref{E:indep-def-funct} depends on suitable approximations that rely on the results of Section \ref{S:C11}. We take $\{\rho_k\}$ to be the approximation sequence of the
 $C^{1,1}$ function $F:= \rho$ given by Proposition \ref{P:2}.
   Similarly, 
 $\{a_k\}$ will be the approximation sequence of the Lipschitz function $G:=a$ given by Proposition \ref{P:approx-LF}. 
  These approximations will be applied via
 the two lemmas below.
 \begin{lemma}\label{L:indep-def-funct-1} The sequence $\{j^*(\deebar\dee(a_k\rho_k))\}$ is
 \begin{enumerate}
      \item[(a)\ ] uniformly bounded, and
      \item[(b)\ ] converging almost everywhere to $j^*(\deebar\dee(\rho'))$.
      \end{enumerate}
 \end{lemma}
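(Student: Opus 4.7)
The plan is to expand $\deebar\dee(a_k\rho_k)$ via the Leibniz rule and analyze the four resulting pieces on $\bndry\D$:
\[
\deebar\dee(a_k\rho_k)\,=\,a_k\,\deebar\dee\rho_k\,+\,\deebar a_k\wedge\dee\rho_k\,+\,\deebar\rho_k\wedge\dee a_k\,+\,\rho_k\,\deebar\dee a_k.
\]
The relevant size bounds are those of Proposition \ref{P:2}(1) for $\rho_k$ (uniformly bounded $C^{1,1}$ norms, so in particular $\nabla^2\rho_k=O(1)$) and Proposition \ref{P:approx-LF}(1) for $a_k$ ($a_k$ and $\nabla a_k$ uniformly bounded but only $\nabla^2 a_k=O(k)$); the relevant convergence facts will be the uniform estimates of Proposition \ref{P:2}(2), the a.e.\ convergence $j^\ast(\deebar\dee\rho_k)\to j^\ast(\deebar\dee\rho)$ from Proposition \ref{P:3} (applied along the smooth sequence of Proposition \ref{P:2}), and the a.e.\ convergence $j^\ast(dG_k)(w)\to j^\ast(dG)(w)$ from Proposition \ref{P:approx-LF}(3), applied separately with $G=a$ and with the Lipschitz function $\rho$ (which is even $C^{1,1}$).

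For (a), the first three pieces, when pulled back to $\bndry\D$, are manifestly $L^\infty$-bounded uniformly in $k$. The delicate term is $j^\ast(\rho_k\,\deebar\dee a_k)$, which in the ambient $\Cn$ need not be uniformly bounded because $\nabla^2 a_k=O(k)$. The main obstacle of the lemma lies precisely here, and is overcome by exploiting that $\rho$ vanishes on $\bndry\D$: by the $O(1/k^2)$ estimate in Proposition \ref{P:2}(2),
\[
|\rho_k(w)|\,=\,|\rho_k(w)-\rho(w)|\,=\,O(1/k^2)\qquad\text{for all }w\in\bndry\D,
\]
so the coefficients of $j^\ast(\rho_k\,\deebar\dee a_k)$ are $O(1/k^2)\cdot O(k)=O(1/k)$ on $\bndry\D$, uniformly in $w$. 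This proves (a) and in passing shows that this fourth piece tends to zero uniformly on $\bndry\D$.

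For (b), the three remaining pieces converge $\sigma$-a.e. on $\bndry\D$: the first to $a\cdot j^\ast(\deebar\dee\rho)$, using the uniform convergence $a_k\to a$ together with the a.e.\ convergence $j^\ast(\deebar\dee\rho_k)\to j^\ast(\deebar\dee\rho)$; and the two cross terms to $j^\ast(\deebar a\wedge\dee\rho)$ and $j^\ast(\deebar\rho\wedge\dee a)$ respectively, using that $j^\ast(\dee\rho_k)$ and $j^\ast(\deebar\rho_k)$ converge uniformly on $\bndry\D$ by Proposition \ref{P:2}(2) while $j^\ast(\dee a_k)$ and $j^\ast(\deebar a_k)$ converge $\sigma$-a.e.\ by Proposition \ref{P:approx-LF}(3). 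Thus $j^\ast(\deebar\dee(a_k\rho_k))\to\Omega$ a.e.\ on $\bndry\D$, where
\[
\Omega\,:=\,a\cdot j^\ast(\deebar\dee\rho)\,+\,j^\ast(\deebar a\wedge\dee\rho)\,+\,j^\ast(\deebar\rho\wedge\dee a),
\]
and $\Omega$ has $L^\infty$ coefficients. To identify $\Omega$ with $j^\ast(\deebar\dee\rho')$ I invoke the uniqueness clause in Proposition \ref{P:3}: since each $a_k\rho_k$ is $C^\infty$, the classical Stokes' theorem gives
\[
\int_{\bndry\D}\!\! j^\ast(\deebar\dee(a_k\rho_k))\wedge\psi\,=\,\int_{\bndry\D}\!\! j^\ast(\dee(a_k\rho_k))\wedge d\psi
\]
for every $C^1$ form $\psi$ on $\bndry\D$ of degree $2n-3$. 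The left-hand side tends to $\int_{\bndry\D}\Omega\wedge\psi$ by dominated convergence (using part (a) and the a.e.\ convergence just established), while on the right $j^\ast(\dee(a_k\rho_k))=a_k\,j^\ast(\dee\rho_k)+\rho_k\,j^\ast(\dee a_k)$ converges uniformly on $\bndry\D$ to $a\,j^\ast(\dee\rho)=j^\ast(\dee\rho')$ (the second summand being $O(1/k^2)$ by the same reasoning as above). Hence $\Omega$ satisfies \eqref{E:(1.11)} with $F=\rho'$, and by the uniqueness in Proposition \ref{P:3} we conclude $\Omega=j^\ast(\deebar\dee\rho')$, completing the proof of (b).
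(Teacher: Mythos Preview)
Your proof is correct and follows essentially the same approach as the paper's. The only cosmetic difference is that you expand via the Leibniz rule for $\deebar\dee$, obtaining four terms of which three survive in the limit, whereas the paper writes $\deebar\dee = d\dee$ and expands $d\dee(a_k\rho_k)$; in that decomposition a second term, $-\dee a_k\wedge d\rho_k$, also tends to zero because $j^\ast(d\rho)=0$ on $\bndry\D$ forces $j^\ast(d\rho_k)=O(1/k)$. Both routes use the same key estimate $\rho_k|_{\bndry\D}=O(1/k^2)$ to tame the $\nabla^2 a_k=O(k)$ term, and both identify the limit with $j^\ast(\deebar\dee\rho')$ via the Stokes/uniqueness argument of Proposition~\ref{P:3}.
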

 To prove this lemma, we recall that the sequences
  $\|\rho_k\|,\ \|\nabla\rho_k\|,\ \|\nabla^2\rho_k\|$, as well as $\|a_k\|,\ \|\nabla a_k\|,\ 
  \|\nabla^2 a_k\|/k$ are all bounded, where $\|\cdot\|$ denotes the sup-norm over $\mathbb R^{2n}$ (see item {\em (1)} in Proposition \ref{P:2}, and item {\em (1)}
  in Proposition \ref{P:approx-LF}, respectively). Also $\{a_k\}$ and $\{\nabla\rho_k\}$ converge uniformly on $\bndry\D$ to $a$ and $\nabla\rho$, respectively. Moreover, 
  $\rho_k(w) =\rho_k(w)-\rho(w)= O(1/k^2)$, and $(\nabla\rho_k)(w) -(\nabla\rho)(w) = O(1/k)$, if $w\in\bndry\D$, by \eqref{E:unif-quantified} in Proposition \ref{P:2}.
Finally, $j^*(d a_k)(w)$ converges for almost every $w\in\bndry\D$ (see item {\em (3)} in Proposition \ref{P:approx-LF}). Now $j^*(\deebar\dee(a_k\rho_k))= j^*(d\,\dee(a_k\rho_k))$ and the latter equals
\begin{equation}\label{E:approx-LF-aux-1}
j^*\bigg(
(d\,\dee a_k)\rho_k - \dee a_k\wedge d\rho_k + 
d a_k\wedge \dee\rho_k + (d\,\dee\rho_k)a_k
\bigg).
\end{equation}

\noindent The first term in the expression above converges uniformly to zero since its norm is 

\noindent $O(k\cdot 1/k^2)$. For the second term note that $j^*(d\rho)=0$, and thus 
$j^*(d\rho_k)= O(1/k)$, so this term tends uniformly to zero. Next,
observe that the sequence
$j^*(d a_k)$ is bounded and it converges almost everywhere, and the same is true for $j^*(\dee\rho_k)$, so the third term converges almost everywhere and is bounded. Finally, the sequence
$j^*\big((d\,\dee\rho_k)a_k\big)$ converges almost everywhere by Proposition \ref{P:3} and is bounded.

Hence we see that
$\{j^*\big(\deebar\dee(a_k\rho_k)\big)\}$ converges almost everywhere and boundedly to a limit which we denote $L$. It only remains to verify that 
$L= j^*(\deebar\dee (\rho'))$, or what is the same, that
\begin{equation}\label{E:approx-LF-aux-2}
\int\limits_{w\in\bndry\D}\!\!\!\!
L(w)\wedge \psi (w) \ =\ 
\int\limits_{w\in\bndry\D}\!\!
\bigg(j^*(\deebar\dee (\rho'))(w)\bigg)
\wedge \psi (w)
\end{equation}
for every $C^1$ form $\psi$ on $\bndry\D$ that has degree $2n-3$. Now according to \eqref{E:(1.11)}, the right-hand side of \eqref{E:approx-LF-aux-2} equals 
$$
\int\limits_{w\in\bndry\D}\!\!\!
j^*(\dee (\rho'))(w)
\wedge d\psi (w),
$$
which, in turn, equals $\int_{\bndry\D} a\,j^*(\dee\rho)\wedge d\psi$, by \eqref{E:deriv-lip}. On the other hand, the left-hand side of \eqref{E:approx-LF-aux-2} is the limit as $k\to\infty$ of 
$\int_{\bndry\D} j^*\big(\deebar\dee(a_k\rho_k)\big)\wedge \psi$; but for each $k$ the latter equals 
$\int_{\bndry\D} j^*\big(\dee(a_k\rho_k)\big)\wedge d\psi$, again by \eqref{E:(1.11)}. This concludes the proof of 
\eqref{E:approx-LF-aux-2} and thus of Lemma \ref{L:indep-def-funct-1}.
As a consequence,
\begin{equation*}\label{E:approx-LF-aux-3} 
j^*(\deebar\dee(a_k\rho_k))
^{n-1}\to\
j^*(\deebar\dee(\rho'))
^{n-1}\qquad \mbox{almost everywhere.}
\end{equation*}

\begin{lemma}\label{L:indep-def-funct-2} We have that,
\begin{equation*}
j^*(\dee\rho)\wedge
j^*(\deebar\dee(a_k\rho_k))^{n-1}\to\
a^{n-1}j^*(\dee\rho)\wedge 
j^*(\deebar\dee(\rho))
^{n-1}\quad \mbox{a.e.}\ \ \bndry\D\, .
\end{equation*}
\end{lemma}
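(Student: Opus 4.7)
The plan is to expand $\deebar\dee(a_k\rho_k)$ by the Leibniz rule, verify that three of the four resulting terms become negligible after restriction to $\bndry\D$, and then expand the $(n-1)$-fold wedge product multinomially and pass to the limit using the quantitative estimates from Propositions \ref{P:2} and \ref{P:approx-LF}.

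First I would write the Leibniz expansion (valid since $a_k,\rho_k\in C^\infty$):
\begin{equation*}
\deebar\dee(a_k\rho_k) = a_k\,\deebar\dee\rho_k + \rho_k\,\deebar\dee a_k + \deebar a_k\wedge\dee\rho_k + \deebar\rho_k\wedge\dee a_k.
\end{equation*}
Call these four 2-forms $\omega_4,\omega_2,\omega_3,\omega_1$ respectively. The key quantitative observations, to be applied after pulling back by $j$ to $\bndry\D$, are: (i) $\rho_k|_{\bndry\D}=O(1/k^2)$ by \eqref{E:unif-quantified}; (ii) $\|\nabla^2 a_k\|=O(k)$ by item (1) of Proposition \ref{P:approx-LF}; (iii) since $j^*(d\rho)=0$ on $\bndry\D$ and $\nabla\rho_k-\nabla\rho=O(1/k)$ there, we have $j^*(\dee\rho_k), j^*(\deebar\rho_k) = O(1/k)$ uniformly on $\bndry\D$; and (iv) $a_k$, $\nabla a_k$ and $j^*(\deebar\dee\rho_k)$ are each uniformly bounded. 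Putting these together:
\begin{equation*}
\|j^*\omega_1\|_\infty, \|j^*\omega_2\|_\infty, \|j^*\omega_3\|_\infty = O(1/k), \qquad \|j^*\omega_4\|_\infty = O(1).
\end{equation*}

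Next I would use the fact that 2-forms commute under wedge to expand
\begin{equation*}
j^*(\deebar\dee(a_k\rho_k))^{n-1} = \sum_{i_1+i_2+i_3+i_4=n-1}\binom{n-1}{i_1,i_2,i_3,i_4}(j^*\omega_1)^{i_1}\wedge(j^*\omega_2)^{i_2}\wedge(j^*\omega_3)^{i_3}\wedge(j^*\omega_4)^{i_4}.
\end{equation*}
Every term with $i_1+i_2+i_3\geq 1$ is $O(1/k)$ in $L^\infty(\bndry\D)$, so after wedging with the bounded form $j^*(\dee\rho)$ I obtain
\begin{equation*}
j^*(\dee\rho)\wedge j^*(\deebar\dee(a_k\rho_k))^{n-1} \;=\; a_k^{n-1}\,j^*(\dee\rho)\wedge j^*(\deebar\dee\rho_k)^{n-1} + O(1/k)
\end{equation*}
uniformly on $\bndry\D$.

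Finally I would take $k\to\infty$: $a_k\to a$ uniformly on $\bndry\D$ (Proposition \ref{P:approx-LF}), while $j^*(\deebar\dee\rho_k)\to j^*(\deebar\dee\rho)$ almost everywhere and boundedly by the very definition used in Proposition \ref{P:3} together with Corollary \ref{C:2}, so the wedge powers converge a.e.\ as well. This yields the claimed a.e.\ convergence. I expect the main obstacle to be bookkeeping, in particular verifying cleanly that $j^*(\dee\rho_k)=O(1/k)$ on $\bndry\D$ (which rests on $j^*(d\rho)=0$ combined with the quantitative uniform estimate in \eqref{E:unif-quantified}); once that is in hand, the remaining estimates are routine applications of Propositions \ref{P:2} and \ref{P:approx-LF}.
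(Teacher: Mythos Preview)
Your argument has a genuine gap at observation (iii). From $j^*(d\rho)=0$ you can only conclude that $j^*(\dee\rho)=-j^*(\deebar\rho)$; neither of these vanishes on $\bndry\D$ (indeed $j^*(\dee\rho)$ is the nontrivial $1$-form appearing in the Leray--Levi measure). What \eqref{E:unif-quantified} gives is $j^*(d\rho_k)=j^*(d\rho_k-d\rho)=O(1/k)$, but $j^*(\dee\rho_k)\to j^*(\dee\rho)\neq 0$ and likewise $j^*(\deebar\rho_k)\to -j^*(\dee\rho)\neq 0$. Consequently your estimates $\|j^*\omega_1\|_\infty,\|j^*\omega_3\|_\infty=O(1/k)$ are false; these forms are only $O(1)$.

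The fix, and this is exactly what the paper does, is to exploit the wedge with $j^*(\dee\rho)$ that is already present in the statement: one has $j^*(\dee\rho)\wedge j^*(\dee\rho_k)=j^*(\dee\rho)\wedge j^*(\dee\rho_k-\dee\rho)=O(1/k)$ since $\dee\rho\wedge\dee\rho=0$, and similarly $j^*(\dee\rho)\wedge j^*(\deebar\rho_k)=O(1/k)$. So each cross term in your multinomial expansion becomes $O(1/k)$ \emph{after} wedging with $j^*(\dee\rho)$, not before. (The paper in fact writes $\deebar\dee=d\,\dee$ and uses the four-term expansion \eqref{E:approx-LF-aux-1}; there the term $-\dee a_k\wedge d\rho_k$ is genuinely $O(1/k)$ after pullback because $j^*(d\rho_k)=O(1/k)$, while the term $da_k\wedge\dee\rho_k$ still requires the wedge trick above.) Once you correct this point, the remainder of your proof --- the multinomial expansion, the bounded a.e.\ convergence of $a_k^{n-1}\,j^*(\dee\rho)\wedge j^*(\deebar\dee\rho_k)^{n-1}$ via Proposition \ref{P:3} and Corollary \ref{C:2} --- goes through as in the paper.
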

To prove this lemma, 
observe
that 
$j^*(\deebar\dee(a_k\rho_k))^{n-1}$ is a sum of terms, each a wedge product of factors taken from the four terms appearing in \eqref{E:approx-LF-aux-1}. These can be dealt with as before, by 
further noting that
 $$
 \dee\rho\wedge\dee\rho_k = \dee\rho \wedge (\dee\rho_k-\dee\rho)=O(1/k).
 $$
Concerning
 the term 
 $j^*(\dee\rho)\wedge (j^*(d\,\dee\rho_k)a_k)^{n-1} = 
 a_k^{n-1}j^*(\dee\rho)\wedge j^*(d\,\dee\rho_k)^{n-1}$, we note that this converges almost everywhere to
 $a^{n-1}j^*(\dee\rho)\wedge j^*(\deebar\dee\rho)^{n-1}$ by \eqref{E:(1.12)} and the comments thereafter, thus concluding the proof of Lemma \ref{L:indep-def-funct-2}. 
 
 We now combine Lemmas \ref{L:indep-def-funct-1} and \ref{L:indep-def-funct-2}, together with 
 \eqref{E:deriv-lip} and \eqref{E:denom-indep}, to see that \eqref{E:indep-def-funct} has been 
 verified, therefore establishing Proposition \ref{P:indep-def-funct}.

  \subsection{Holomorphic character and reproducing property}\label{SS:repr-hol-fncts} We now  establishes the further defining properties of \eqref{E:CLI-def}  that will lead to 
  the corresponding operator on the boundary, the Cauchy-Leray transform, see \eqref{E:limit-a} and 
  \eqref{E:CI-CT}.  
  
 \begin{proposition}\label{P:repr-hol-fncts}
 \qquad\qquad\qquad\qquad\qquad\qquad
 \begin{enumerate}
 \item[(a)\ ] Whenever $f$ is an integrable function on $\bndry\D$, the Cauchy-Leray integral 
 $\CI (f)$ given by \eqref{E:CLI-def} is holomorphic in $\D$.
 \item[(b)\ ] Suppose $F_0$ is continuous in $\overline{\D}$ and holomorphic in $\D$, and let
 $$
 f_0 = F_0\bigg|_{\bndry\D}
 $$
 Then $\CI (f_0)(z)=F_0(z)$ for any $z$ in $\D$.
  \item[ (c)\ ] 
  Suppose that the function $f$ satisfies the
  H\"older-like condition
 \begin{equation}\label{E:holder-d}
 |f(w_1)-f(w_2)|\lesssim \d (w_1, w_2)^\alpha,\quad \mbox{for all}\ w_1, w_2\in\bndry\D.
 \end{equation}
 Then $\CI (f)$ extends continuously to $\overline{\D}$.
 \end{enumerate}
 \end{proposition}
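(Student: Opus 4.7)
My plan is to prove the three parts in order, each building on the preceding one: part~(a) by routine differentiation under the integral sign, part~(b) by reducing to the classical smooth Cauchy-Fantappi\`e formula via inner approximation, and part~(c) by combining (b) with a dominated-convergence argument against a majorant built from the integral estimates of Section~\ref{SS:integral-estimates}. For part~(a), the kernel $\Delta(w,z)^{-n}$ is holomorphic in $z$ (indeed $\Delta$ is affine in $z$), and the strong $\C$-linear convexity bound $|\Delta(w,z)|\geq c|w-z|^2$ ensures that on any compact $K\Subset \D$ the kernel is uniformly bounded in $w\in\bndry\D$. Combined with the boundedness of the coefficients of $d\l$ (Proposition~\ref{P:Leray-Levi-surface}), this permits differentiation under the integral sign to conclude $\dee_{\bar z_j}\CI(f)\equiv 0$ on $\D$.

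For part~(b), my approach is inner approximation by smooth strongly $\C$-linearly convex subdomains. Letting $\{\rho_k\}$ denote the smooth approximating sequence of Proposition~\ref{P:2} applied to $F:=\rho$, I choose $\epsilon_k\downarrow 0$ slowly enough (relative to the convergence rates of Proposition~\ref{P:2}) that $\D_k:=\{\rho_k<-\epsilon_k\}$ is a smooth subdomain of $\D$ that contains the fixed $z_0$ for $k$ large and remains strongly $\C$-linearly convex with constants independent of $k$. Since $F_0$ is holomorphic in a neighborhood of $\overline{\D_k}$, the classical (smooth-boundary) Cauchy-Fantappi\`e formula, proved by Stokes' theorem applied to the $d_w$-closed form $F_0(w)\,\dee\rho_k\wedge(\deebar\dee\rho_k)^{n-1}/\Delta_k(w,z_0)^n$ on $\D_k\setminus\{z_0\}$, gives
\begin{equation*}
F_0(z_0)=\frac{1}{(2\pi i)^n}\int_{\bndry\D_k}F_0(w)\,\frac{\dee\rho_k\wedge(\deebar\dee\rho_k)^{n-1}}{\Delta_k(w,z_0)^n}.
\end{equation*}
I then parametrize $\bndry\D_k$ over $\bndry\D$ via the inward normal flow and pass to the limit $k\to\infty$, invoking the uniform convergence of $\rho_k,\nabla\rho_k$ and the a.e.\ bounded convergence of $\deebar\dee\rho_k$ guaranteed by Proposition~\ref{P:2}, together with the continuity of $F_0$ on $\overline\D$ and the independence of defining function (Proposition~\ref{P:indep-def-funct}), to conclude $F_0(z_0)=\CI(f_0)(z_0)$.

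For part~(c), I apply (b) to the constant function $F_0\equiv 1$ to deduce $\CI(1)\equiv 1$ on $\D$, so that for any $z_0\in\bndry\D$,
\begin{equation*}
\CI(f)(z)-f(z_0)=\int_{\bndry\D}\frac{f(w)-f(z_0)}{\Delta(w,z)^n}\,d\l(w).
\end{equation*}
The H\"older hypothesis gives $|f(w)-f(z_0)|\lesssim \d(w,z_0)^\alpha$, and the perturbation inequality $|\Delta(w,z)|\geq|\Delta(w,z_0)|-C|z-z_0|$ yields $|\Delta(w,z)|\gtrsim\d(w,z_0)^2$ on the region $\d(w,z_0)^2\geq 2C|z-z_0|$. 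The complementary lower bound $|\Delta(w,z)|\gtrsim\delta(z)=\mathrm{dist}(z,\bndry\D)$, which follows from expanding $\mathrm{Re}\,\Delta(w,z)=-\rho(z)/2+O(|w-z|^2)$ using that $\rho$ is $C^{1,1}$, handles the small complementary region. Together these produce a $\l$-integrable majorant of the form $C\d(w,z_0)^{\alpha-2n}$ (using Corollary~\ref{C:int-ests}(a)), so dominated convergence identifies the boundary limit $\lim_{z\to z_0}\CI(f)(z)=f(z_0)+\int_{\bndry\D}[f(w)-f(z_0)]\,\Delta(w,z_0)^{-n}d\l(w)$, where the limit integral is absolutely convergent by the same Corollary. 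A parallel dominated-convergence argument exploiting the uniform continuity of $f$ shows that this limit depends continuously on $z_0$.

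The main obstacle I expect lies in part~(b): ensuring that the smoothing-and-inward-shifting procedure produces domains $\D_k$ that are simultaneously smooth and strongly $\C$-linearly convex with constants controlled uniformly in $k$ (since strong $\C$-linear convexity is a second-order condition while Proposition~\ref{P:2} delivers only almost-everywhere convergence of $\nabla^2\rho_k$), and then rigorously justifying the passage to the limit over the moving boundaries $\bndry\D_k$. This step will closely parallel, and should borrow techniques from, the proof of Proposition~\ref{P:indep-def-funct}, in particular the a.e.\ bounded convergence of $j^*(\deebar\dee\rho_k)$ established in Section~\ref{S:C11}.
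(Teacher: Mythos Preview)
Your approach to part~(a) matches the paper's.

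For part~(b), the paper takes a different and simpler route: rather than approximating the domain by smooth subdomains $\D_k$, it keeps $\bndry\D$ fixed and approximates only the defining function by the smooth $\rho_k$ of Proposition~\ref{P:2}, so that $\eta_k=\dee\rho_k/\Delta_k$ becomes a $C^1$ generating form on the \emph{original} domain $\D$. One then applies the Cauchy--Fantappi\`e formula~\eqref{E:gener-repr-holo} with $\eta_k$ and passes to the limit using the a.e.\ bounded convergence of $j^*(\deebar\dee\rho_k)$ on the fixed surface $\bndry\D$ (Proposition~\ref{P:3}); this sidesteps the moving-boundary problem entirely. Your inner-approximation approach is viable and has the genuine advantage that $F_0$ is automatically smooth on $\overline{\D_k}$ (the cited formula~\eqref{E:gener-repr-holo} formally requires $F_0\in C^1(\overline\D)$), but pulling back $(\deebar\dee\rho_k)^{n-1}$ from the moving $\bndry\D_k$ to $\bndry\D$ is more delicate than you indicate, precisely because Proposition~\ref{P:3} only controls the restriction to the fixed $\bndry\D$. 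Two side remarks: your worry about strong $\C$-linear convexity of $\D_k$ is unnecessary, since all you need for $\eta_k$ to be a generating form is $\Delta_k(\cdot,z_0)\neq 0$ near $\bndry\D_k$, and this follows from the uniform convergence $\Delta_k\to\Delta$ together with $|\Delta(w,z_0)|\geq c\,\mathrm{dist}(z_0,\bndry\D)^2>0$ in a neighborhood of $\bndry\D$; and your assertion that $F_0(w)\,\dee\rho_k\wedge(\deebar\dee\rho_k)^{n-1}/\Delta_k(w,z_0)^n$ is $d_w$-closed on $\D_k\setminus\{z_0\}$ is incorrect (compute $\deebar$; a term $(\deebar\dee\rho_k)^n/\Delta_k^n$ survives), though the reproducing formula itself is classical and proved instead by homotopy to the Bochner--Martinelli form.

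For part~(c) there is a real gap. Your claimed majorant $C\,\d(w,z_0)^{\alpha-2n}$ does \emph{not} dominate the integrand for arbitrary approach $z\to z_0$. On the region $\d(w,z_0)^2<2C|z-z_0|$ your only lower bound is $|\Delta(w,z)|\gtrsim\delta(z)$, so the integrand there is $\lesssim\d(w,z_0)^\alpha/\delta(z)^n$; integrating over that region gives a contribution $\lesssim|z-z_0|^{n+\alpha/2}/\delta(z)^n$, which blows up under tangential approach (e.g.\ $\delta(z)\sim|z-z_0|^2$). The paper avoids this by restricting to the normal approach $z_\epsilon=z_0+\epsilon\nu_{z_0}$ and proving the two-sided estimate of Lemma~\ref{L:Delta-epsilon}, namely $|\Delta(w,z_\epsilon)|\approx|\Delta(w,z_0)|+\epsilon$. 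This single estimate delivers both the integrable majorant $\d(w,z_0)^{\alpha-2n}$ and the \emph{uniform} rate $\sup_{z_0\in\bndry\D}|\CI(f)(z_\epsilon)-\CT(f)(z_0)|\lesssim\epsilon^{\alpha/2}$, from which continuous extension to $\overline\D$ follows. Your two separate bounds do recover this majorant once you restrict to the normal approach (where $\delta(z_\epsilon)=\epsilon=|z_\epsilon-z_0|$, so on the near region $\delta(z_\epsilon)\gtrsim\d(w,z_0)^2$), so your argument is salvageable after that restriction together with a check of uniformity in $z_0$.
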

 Here and in the sequel we will restrict ourselves to $0<\alpha<1$. We note for future reference 
 that the class of functions satisfying condition \eqref{E:holder-d}
 includes the class $C^1(\bndry\D)$.

The first conclusion is self-evident since $\Delta (w, z)$ is linear in $z$ and $|\Delta (w, z)|>0$ whenever $z\in \D$ and $w\in\bndry\D$ by \eqref{E:Delta-bound}. 
The proof of conclusion {\em (b)} is based on the Cauchy-Fantappi\` e formalism. The version we need can be found in \cite{LS-2}; other versions are in \cite[Chapter IV]{R}. We assume that $\D$ is a bounded $C^1$ domain and $\eta$ is a ``$C^1$ generating form''. This means that $\eta = \eta (w, z)$ is a form of type $(1, 0)$ in $w$, with coefficients that are $C^1$ functions which satisfy the following property: for any $z\in\D$ there is
a neighborhood $\U$ of $\bndry\D$ so that 
$\U\cap \{z\}=\emptyset$
 and $\langle\eta(w, z), w-z\rangle =1$, whenever
  $w\in \U$. Under these assumptions, if $F_0\in C^1(\overline{\D})$ and is holomorphic in $\D$, we have \cite[Proposition 2]{LS-2}
\begin{equation}\label{E:gener-repr-holo}
F_0(z) = \frac{1}{(2\pi i)^n}\int\limits_{w\in\bndry\D}\!\!\!\! f_0(w)\, j^*\big(\eta\wedge(\deebar_w\eta)^{n-1}
(w, z)\big),\quad z\in\D\, .
\end{equation}
Now we would like to apply this formula to the case when 
\begin{equation}\label{E:gen-def-form}
\eta(w, z)=\frac{\dee\rho (w)}{\Delta (w, z)}\, .
\end{equation}
Note that $\eta$ is well defined because we know that $\Delta (w,z)\neq 0$ whenever $w\in\bndry\D$ and $z\in\D$ (by the strong $\C$-linear convexity of $\D$); by continuity, it follows that 
for any $z\in\D$ there is an open neighborohood $\U$
of $\bndry\D$ such that $\U\cap \{z\}=\emptyset$ and
$\Delta (w,z)\neq 0$
whenever $w\in \U$, 
 so that $\eta$ is a generating form. Inserting this choice of $\eta$
 in the righthand side of \eqref{E:gener-repr-holo} would give precisely $\CI(f_0)(z)$, the Cauchy-Leray integral of $f_0$; however, since we are dealing with domains that are only of class $C^{1,1}$, the difficulty is that in general $\eta$ as given by 
\eqref{E:gen-def-form} is not of class $C^1$ and so the Cauchy-Fantappi\`e formula \eqref{E:gener-repr-holo} cannot be applied to such $\eta$.
To get around this obstacle we use the approximation by smooth functions $\{\rho_k\}$ that was obtained earlier, 
 see e.g., Lemma \ref{L:indep-def-funct-1}. If we set 
\begin{equation*}
\Delta_k(w, z) =\langle\dee\rho_k(w), w-z\rangle
\end{equation*}
and use the fact that $\nabla\rho_k$ converges to $\nabla\rho$ uniformly on $\bndry\D$ (Proposition \ref{P:2}) and that $\nabla^2\rho_k$ are bounded uniformly in $w\in\bndry\D$ and 
in $k\geq k(z)$ (Proposition \ref{P:approx-LF}),
we obtain that $\Delta_k(w, z)\neq 0$ for any $w\in \U$ and from this
 we can conclude that 
\begin{equation*}
\eta_k(w, z) = \frac{\dee\rho_k(w)}{\Delta_k(w, z)}
\end{equation*}
is a $C^1$ generating form for any $k\geq k(z)$. It now follows 
 that \eqref{E:gener-repr-holo} holds with $\eta$ replaced by $\eta_k$. 
 However
 \begin{equation*}
 \deebar\eta_k=\frac{\deebar\dee\rho_k}{\Delta_k}\ +\
 \dee\rho_k\wedge\deebar\left(\frac{1}{\Delta_k}\right)
 \end{equation*}
 and since $\dee\rho_k\wedge\dee\rho_k=0$ we obtain
 \begin{equation*}
 \eta_k\wedge (\deebar\eta_k)^{n-1} = \frac{\dee\rho_k\wedge(\deebar\dee\rho_k)^{n-1}}{\Delta_k^n}\, .
  \end{equation*}
 Therefore the righthand side of \eqref{E:gener-repr-holo} becomes
 \begin{equation*}
 \frac{1}{(2\pi i)^n}
 \int\limits_{w\in\bndry\D}\!\!\!\! f_0(w)\, 
 \frac{j^*\big(\dee\rho_k(w)\big)\wedge j^*\big(\deebar\dee\rho_k)^{n-1}(w)\big)}
 {\Delta_k(w, z)^n}\, .
  \end{equation*}
By the uniform convergence of $\Delta_k(w, z)^{-n}$ to $\Delta (w, z)^{-n}$ (recall that $\Delta(w, z)\neq 0$) and of 
$j^*\!\big(\dee\rho_k\big)$ to $j^*\!\big(\dee\rho\big)$ (Proposition \ref{P:2}), as well as the bounded pointwise convergence of $j^*\!\big(\deebar\dee\rho_k\big)$ to $j^*\!\big(\deebar\dee\rho\big)$
(Proposition \ref{P:3}), we see that the above integral converges to
\begin{equation*}
  \int\limits_{w\in\bndry\D}\!\!\!\! \frac{f_0(w)\,d\l (w)} 
 {\Delta (w, z)^n}\, 
 \end{equation*}
once we recall the definition of the Leray-Levi measure $d\l$ given by 
\eqref{E:linear-functional}. Conclusion {\em (b)} is thus established.
\medskip

We turn to the proof of conclusion {\em (c)}. We begin by applying conclusion 
{\em (b)} in the special case: 
$F_0(z)\equiv 1$, 
  giving us that
\begin{equation}\label{E:CL-repr-spec}
\int\limits_{w\in\bndry\D}\!\!\!\! \frac{d\l (w)} 
 {\Delta (w, z)^n}\ =\ 1\quad \mbox{for all}\ z\in\D\, .
\end{equation}
 Now for any $z\in\D$ denote by $\dz$ the distance of $z$ from $\bndry\D$. Then for some $c>0$
 that is sufficiently small, every $z\in\D$ with $\dz \leq c$ has an orthogonal projection 
 $$
z\mapsto \pz \in\bndry\D
 $$
 for which $|z-\pz|=\dz$, and $z=\pz +\dz\nu_{\pz},$ where $\nu_{\pz}$  is the inner unit normal vector 
 at $\pz\in\bndry\D$. Hence we can write
 \begin{equation}\label{E:CL-diff-repr}
 \CI (f) (z) - f(\pz) = \int\limits_{w\in\bndry\D}\!\!\!
 \frac{f(w)-f(\pz)}{\Delta (w, z)^n}\, d\l (w)\, .
 \end{equation}
 To estimate this integral we need the following lemma, which we will use several times: 
 
 \begin{lemma}\label {L:Delta-epsilon}
        Let $\D=\{\rho<0\}$ 
        be a bounded, strongly $\C$-linearly convex domain
  of class $C^{1,1}$. 
   Then, for sufficiently small $\epsilon$ and for any $\zp, w\in\bndry\D$
      we have
\begin{equation*}
|\Delta  (w, \zp)| +\epsilon\  \approx\  |\Delta (w,  z_\epsilon)|
\end{equation*}
 where $ z_\epsilon = \zp+\epsilon\nu_{\zp}$, with $\nu_{\zp}$ the  inner unit normal vector at
  $\zp$.
\end{lemma}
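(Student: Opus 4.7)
The plan is to establish both directions of the equivalence $|\Delta(w,\zp)| + \epsilon \approx |\Delta(w, z_\epsilon)|$ starting from the identity
\begin{equation*}
\Delta(w, z_\epsilon) \;=\; \Delta(w,\zp) \;-\; \epsilon\,\langle \dee\rho(w),\nu_{\zp}\rangle,
\end{equation*}
which is immediate from the definition of $\Delta$ and its linearity in the second variable. Setting $M := \sup_{w,\zp\in\bndry\D}|\langle \dee\rho(w),\nu_{\zp}\rangle|$ (finite since $|\dee\rho|$ is bounded on $\bndry\D$), the triangle inequality yields the easy direction $|\Delta(w, z_\epsilon)| \leq |\Delta(w,\zp)| + M\epsilon \lesssim |\Delta(w,\zp)| + \epsilon$ at once.

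The plan for the harder direction $|\Delta(w, z_\epsilon)| \gtrsim |\Delta(w,\zp)| + \epsilon$ is first to prove the \emph{uniform} lower bound $|\Delta(w, z_\epsilon)| \gtrsim \epsilon$ for all $w \in \bndry\D$ and all sufficiently small $\epsilon$. For this I will combine two complementary estimates. The first uses the Taylor formula for the $C^{1,1}$ function $\rho$, which in view of \eqref{E:pairing-real-im} can be rewritten as
\begin{equation*}
\rho(z) - \rho(w) \;=\; -2\,\mathrm{Re}\,\Delta(w, z) \,+\, R(w, z),\qquad |R(w, z)| \leq C|w-z|^2.
\end{equation*}
Taking $w \in \bndry\D$ (so $\rho(w) = 0$), specializing to $z = z_\epsilon$, and using the bound $-\rho(z_\epsilon) \geq c_0\epsilon$ for small $\epsilon$ — which follows from a separate Taylor expansion at $\zp$ giving $\rho(z_\epsilon) = -\epsilon|\nabla\rho(\zp)| + O(\epsilon^2)$ together with the uniform positive lower bound on $|\nabla\rho|$ along $\bndry\D$ — yields
\begin{equation*}
\mathrm{Re}\,\Delta(w, z_\epsilon) \;\geq\; c_0\,\epsilon/2 \,-\, C\,|w-z_\epsilon|^2/2.
\end{equation*}
The second ingredient is strong $\C$-linear convexity \eqref{E:Delta-bound}, extended by continuity to $z_\epsilon \in \overline{\D}$, which gives $|\Delta(w, z_\epsilon)| \geq c|w-z_\epsilon|^2$. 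These two complement each other across the natural scale $|w - z_\epsilon| \sim \sqrt{\epsilon}$: if $|w-z_\epsilon|^2 \leq c_0\epsilon/(2C)$ the first estimate gives $|\Delta(w, z_\epsilon)| \geq c_0\epsilon/4$; otherwise the second gives $|\Delta(w, z_\epsilon)| \geq cc_0\epsilon/(2C)$. Either way, $|\Delta(w, z_\epsilon)| \gtrsim \epsilon$ uniformly.

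The argument is then completed by combining this uniform bound with the reverse triangle inequality $|\Delta(w, z_\epsilon)| \geq |\Delta(w,\zp)| - M\epsilon$ via a dichotomy on the size of $|\Delta(w,\zp)|$ relative to $\epsilon$. If $|\Delta(w,\zp)| \geq 2M\epsilon$, the reverse triangle inequality gives $|\Delta(w, z_\epsilon)| \geq |\Delta(w,\zp)|/2$, which is comparable to $|\Delta(w,\zp)| + \epsilon$ since then also $\epsilon \leq |\Delta(w,\zp)|/(2M)$. Otherwise $|\Delta(w,\zp)| + \epsilon < (2M+1)\epsilon$, and the uniform lower bound $|\Delta(w, z_\epsilon)| \gtrsim \epsilon$ is what is needed. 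The main subtlety to anticipate is that neither the reverse triangle inequality nor the Taylor-based real-part estimate is individually sufficient near the critical scale $|\Delta(w,\zp)|\sim \epsilon$, where the contribution of the second argument to $\Delta(w,\,\cdot\,)$ could in principle nearly cancel that of the first; it is the combination of the $C^{1,1}$ Taylor identity with strong $\C$-linear convexity that rules this cancellation out.
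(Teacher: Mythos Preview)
Your argument is correct and considerably cleaner than the paper's. The paper fixes $w$, passes to the special coordinate system of Section~\ref{SS:strong C-lin-convex} centered at $w$, invokes the implicit function theorem to write $\bndry\D$ locally as a graph $y_n=\Phi(z',x_n)$, and then does an explicit case analysis (first $\zp$ close to $w$ versus far, then within the close case a further subdivision on $A|x_n|$ versus $\epsilon/6$) based on the exact formula $\Delta(w,\zp)=\tfrac12(\Phi(z',x_n)-ix_n)$. Your route is coordinate-free: you extract the needed lower bound $|\Delta(w,z_\epsilon)|\gtrsim\epsilon$ directly from the $C^{1,1}$ Taylor identity $\rho(z)-\rho(w)=-2\,\mathrm{Re}\,\Delta(w,z)+O(|w-z|^2)$ combined with strong $\C$-linear convexity $|\Delta(w,z_\epsilon)|\geq c|w-z_\epsilon|^2$, playing the two off against each other at the crossover scale $|w-z_\epsilon|\sim\sqrt\epsilon$. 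This isolates exactly the two structural ingredients (the second-order Taylor control from $C^{1,1}$, and condition~\eqref{E:Delta-bound}) that make the lemma work, and it avoids both the implicit function theorem and the somewhat delicate sub-case bookkeeping; the paper's computation, on the other hand, gives a bit more explicit information about the shape of $\Delta$ in adapted coordinates, echoing what is already used in Proposition~\ref{P:surface-balls}.
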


\noindent{\em Proof of Lemma \ref {L:Delta-epsilon}.\ }
Fix $w\in \bndry \D$. 
We distinguish two cases: $\zp$ close to $w$, and $\zp$ far from $w$.
In dealing with the first case,
by the implicit function theorem (Remark 2 in Appendix II)
we may assume that in the coordinate system (centered at $w$) that was introduced earlier, 
\begin{equation*}
\rho (z) = a(z)(\Phi (z', x_n) - y_n)\, ,\quad z=(z', z_n)\quad \mbox{and}\quad z_n=x_n+iy_n
\end{equation*}
whenever $z\in\overline
D$ is close to $w$, where $\Phi$ is a $C^{1, 1}$-smooth function and $a$ is strictly positive and Lipschitz. We then observe that by
\eqref{E:denom-indep}, we may take 
 without loss of generality 
   $\rho (z) = \Phi (z', x_n) - y_n$, when $z$ is sufficiently close to $w$.
We claim that 
\begin{equation}
\label{E:fact-1'}
\Phi (z', 0)> 0 \quad \mbox{whenever}\quad z'\neq 0 \, .
\end{equation}
Indeed, if $\Phi(z', 0)\leq 0$, then the point $(z', 0)$ would be in $\overline{\D}$; but 
in our coordinate system, see \eqref{E:Delta-repr}, $\Delta (w, (z', 0))=0$, and this contradicts \eqref{E:Delta-bound}, as 
 $|w-(z', 0)|= |z'|>0$. Note that 
  it follows by continuity from \eqref{E:fact-1'} 
 that
$ \Phi (z', 0)\geq 0 \ \mbox{for any} \ z'$.
 Now $\Delta(w,  z_\epsilon) = \Delta  (w, \zp) -\epsilon\langle\dee\rho (w), \nu_{\zp}\rangle$, with 
  $ z_\epsilon = \zp+\epsilon \nu_{\zp}$. However $\langle\dee\rho (w), \nu_w\rangle = -1/2$, since
 $\dee\rho (w) = (0, i/2)$ and $\nu_w = (0, i)$. 
 Moreover, since $\D\in C^{1,1}$ we have
 \begin{equation}\label{E:normals}
 \nu_{\zp} = \nu_w + O(|\zp-w|)
 \end{equation}
 Hence 
 \begin{equation}\label{E:Delta-Delta-eps}
 \Delta (w,  z_\epsilon) = \Delta  (w, \zp) +\epsilon/2 + O(\epsilon |\zp-w|)\, ,
 \end{equation}
 and since $\zp \in\bndry\D$, we have
 \begin{equation}\label{E:fact-0}
 \Delta  (w, \zp) = \frac12(y_n -ix_n) = \frac12(\Phi (z', x_n) -i x_n)
 \end{equation} 
which gives
 $ |\Delta  (w, \zp)|\approx |\Phi (z', x_n)| + |x_n|$.
However
$ \Phi (z', x_n) = \Phi (z', 0) + O(|x_n|)$,
 since $\Phi$ is of class $C^1$, and clearly $|X| + O(|x_n|) \approx \big|X+ O(|x_n|)\big| + O(|x_n|)$; taking
$ X = \Phi (\zp', 0)\geq 0$
  we see that
 $|\Delta  (w, \zp)|\ \approx\ \Phi (z', 0) + O(|x_n|)$.

 Next, using \eqref{E:fact-0}, we find
  \begin{equation*}
 \Delta (w,  z_\epsilon) = \frac12\Phi (\zp', 0) + O(|\xp_n|)+\frac{\epsilon}{2} -\frac i2 x_n + O(\epsilon |\zp-w|)
 \end{equation*}
 and this grants
  \begin{equation}\label{E:ineq-zero}
| \Delta (w,  z_\epsilon)| \approx \bigg|\frac12\Phi (\zp', 0) + O(|x_n|) + \epsilon/2  + 
O(\epsilon |\zp-w|)\bigg|\ +\ 
\bigg|\frac 12 x_n + O(\epsilon |\zp-w|)\bigg|\,.
 \end{equation}
We now write, for convenience: $|O(|x_n|)|\leq A|x_n|$ and we choose $\zp\in\bndry\D$ sufficiently close to $w$ so that
$|O(\epsilon |\zp-w|)|\leq \min\{\epsilon/6,\ \epsilon/12A\}$.

\noindent We distinguish two sub-cases: when $A|x_n|\leq \epsilon/6$, and when 
$A|x_n|> \epsilon/6$. 

If $A|x_n|\leq \epsilon/6$, we have that each of $O(|x_n|)$ and
 $O(\epsilon |\zp-w|)$ is majorized by $\epsilon/6$, and from this it follows that 
$$
| \Delta (w,  z_\epsilon)| \gtrsim \bigg|\frac12\Phi (\zp', 0) + O(|x_n|) + \epsilon/2  +
 O(\epsilon |\zp-w|)\bigg|\gtrsim \Phi (z', 0) +\epsilon\, .
$$
Since
 $A|x_n|\leq \epsilon/6$ then it follows from the above that 
$$
| \Delta (w,  z_\epsilon)|\ \gtrsim\ \Phi (z', 0) + |x_n| + \epsilon\ \approx\ |\Delta (\zp, w)|+\epsilon\, .
$$
On the other hand, when $A|x_n|> \epsilon/6$, then it follows  from $|O(\epsilon |\zp-w|)|\leq \epsilon/12 A$ 
 that 
\begin{equation*}
 \bigg|\frac 12 x_n + O(\epsilon |\zp-w|)\bigg|\gtrsim |x_n|/2
\end{equation*}
so that 
\begin{equation*}
| \Delta (w,  z_\epsilon)| \gtrsim \bigg|\frac12\Phi (\zp', 0) + O(|x_n|) + \epsilon/2  + 
O(\epsilon |\zp-w|)\bigg| + |O(|x_n|)|.
\end{equation*}
From this we obtain
\begin{equation*}
| \Delta (w,  z_\epsilon)| \gtrsim \bigg|\Phi (\zp', 0) + \epsilon  + O(\epsilon |\zp-w|)\bigg| + 
|O(|x_n|)|\gtrsim
\Phi (\zp', 0) + \epsilon + |O(|x_n|)|\approx |\Delta (\zp, w)|+\epsilon.
\end{equation*}
The opposite inequality: $| \Delta (w,  z_\epsilon)| \lesssim |\Delta (w, \zp)|+\epsilon$ is trivial because $\Delta (w,  z_\epsilon) = \Delta  (w, \zp) + O(\epsilon)$, see \eqref{E:Delta-Delta-eps},
 (recall that $\D$ is bounded).
 This concludes the proof of the case when $\zp = (z', z_n)\in\bndry \D$ is close to $w$;
 finally, the proof for the case when $\zp\in\bndry\D$ is away from $w$ is trivial, because then $|\Delta  (w, \zp)|\approx 1$, by strong $\C$-linear convexity, and clearly $|\Delta (w,  z_\epsilon) - \Delta  (w, \zp)|\lesssim \epsilon$, so if $\epsilon$ is small then $|\Delta (w,  z_\epsilon)|\approx 1$, which proves the lemma.
 \qed

 \smallskip
 
 We resume the proof of part {\em (c)} of Proposition \ref{P:repr-hol-fncts}. Setting 
 $ z_\epsilon = \zp+\epsilon\nu_{\zp}$ with $\zp\in\bndry\D$ as in the lemma just proved, we can rewrite
  \eqref{E:CL-diff-repr} as
 \begin{equation*}
  \CI (f) (z_\epsilon) = \int\limits_{w\in\bndry\D}\!\!\!
 \frac{f(w)-f(\zp)}{\Delta (w, z_\epsilon)^n}\, d\l (w) + f(\zp), \quad \zp\in\bndry\D.
 \end{equation*}
 We note first that $\lim\limits_{\epsilon\to 0} \CI (f)(z_\epsilon)$ exists for each fixed $\zp\in\bndry\D$ and we designate this limit as $\CT f(\zp)$. In fact, according to Lemma \ref{L:Delta-epsilon}, $|\Delta (w, z_\epsilon)|\gtrsim |\Delta(w, z)|=\d(w, z)^2$. Moreover, since $f$ satisfies \eqref{E:holder-d}, the integrand above is  majorized by a multiple of \, $\d(w, z)^{-2n+\alpha}$, which is an integrable function in view of \eqref{E:int-est-a}.
 The dominated convergence theorem then guarantees the existence of the limit and we have
 \begin{equation}\label{E:limit-a}
 \CT (f)(\zp) = \int\limits_{w\in\bndry\D}\!\!\!
 \frac{f(w)-f(\zp)}{\Delta  (w, \zp)^n}\, d\l (w) + f(\zp), \quad \zp\in\bndry\D.
 \end{equation}
  This defines the {\em Cauchy-Leray transform}, $f\mapsto\CT(f)$. 
  
  Note that if $\z$ lies outside the support of $f$, the above formula takes the simpler form
  
  \begin{equation}\label{E:limit-a-out}
 \CT (f)(\zp) = \int\limits_{w\in\bndry\D}\!\!\!
 \frac{f(w)}{\Delta  (w, \zp)^n}\, d\l (w)\, .
 \end{equation}

 We next see that the above convergence is in fact uniform, and more precisely, that
 \begin{equation}\label{E:unif-conv-a}
 \sup\limits_{\zp\in\bndry\D}|\CI(f)(z_\epsilon)-\CT(f)(\zp)|\lesssim \epsilon^{\alpha/2}.
 \end{equation}
 Indeed,
 \begin{equation*}
 \CI(f)(z_\epsilon)-\CT(f)(\zp) =  \int\limits_{w\in\bndry\D}\!\!\!
 \left[\frac{1}{\Delta (w, z_\epsilon)^n}- \frac{1}{\Delta  (w, \zp)^n}\right](f(w)-f(\zp))\, d\l(w)\, .
 \end{equation*}
 We break the integral into two parts, $I+II$, according to whether the integration above is taken over those $w$ for which $\d (w, z)\leq \epsilon^{1/2}$, or $w$ in the complementary set. From Lemma \ref{L:Delta-epsilon} and \eqref{E:holder-d} we see that $I$ is majorized by a multiple of
 \begin{equation*}
 \int\limits_{\d(w,\zp)<\epsilon^{1/2}}\!\!\!\!\!\!\!\!\!
 \d(w, z)^{-2n+\alpha}\,d\l(w),
 \end{equation*}
 which is $O(\epsilon^{\alpha/2})$ by \eqref{E:int-est-a}. 
 
 To estimate $II$ we note that
 $|\Delta(w, z_\epsilon)^{-n} - \Delta  (w, \zp)^{-n}|= O(\epsilon\,|\Delta(w, z)|^{-n-1})$. This estimates holds because of the inequality
  \begin{equation}\label{E:cplx-est-a}
 |u^{-n}-v^{-n}|\lesssim |u-v|\, |v|^{-n-1}\, ,
 \end{equation}
which is valid for any pair of complex numbers $u$ and $v$, with $|u|\gtrsim |v|>0$, taken together with Lemma \ref{L:Delta-epsilon} and the fact that $|\Delta(w, z_\epsilon) - \Delta  (w, \zp)|= O(\epsilon)$. Thus, by \eqref{E:int-est-b}, $II$ is  majorized by a multiple of
\begin{equation*}
\epsilon\!\!\!\!\!\!\!\!\!\!\int\limits_{\d(w,\zp)>\epsilon^{1/2}}\!\!\!\!\!\!\!\!
\d(w, z)^{-2n - 2+\alpha}\,d\l(w)\ =\ O(\epsilon^{\alpha/2})\quad \mbox{as long as}\ \alpha<2.
\end{equation*}
Therefore \eqref{E:unif-conv-a} is proved. The uniform convergence that follows from it ensures that $\CI (f)$ extends to a continuous function on $\overline{\D}$, thus proving conclusion {\em (c)} of the proposition, and furthermore we have
\begin{equation}\label{E:CI-CT}
\CI(f)\big|_{\bndry\D} =\CT (f).
\end{equation}
whenever $f$ satisfies the H\"older-like condition \eqref{E:holder-d}, and
we shall see below that the function $\CT (f)$ also satisfies condition \eqref{E:holder-d}.
\section{Main Theorem}\label{S:main-thm} 
Our goal here
 is to state the main result and to exhibit the fundamental feature of the Cauchy-Leray kernel that will lead to the cancellation conditions in Section \ref{SS:cancellation}.
\subsection{Statement.}\label{SS:statem}
 Let $\D$ be a bounded $C^{1,1}$ domain that is strongly $\C$-linearly convex. We consider the Cauchy-Leray transform $\CT$ that is given in terms of the Cauchy-Leray integral $\CI$ by \eqref{E:CLI-def} and \eqref{E:CI-CT}, or equivalently by \eqref{E:limit-a}.
\begin{theorem}\label{T:main}
The transform $f\mapsto \CT(f)$, initially defined for functions
  $f$ that satisfy the H\"older-like condition \eqref{E:holder-d} for some $\alpha$, extends to a bounded linear operator on $L^p(\bndry\D)$, for $1<p<\infty$.
\end{theorem}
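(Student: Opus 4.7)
The plan is to verify that $\mathcal C$ fits the framework of a Calder\'on--Zygmund operator on the space of homogeneous type $(\bndry\D, \d, d\l)$ and then invoke a $T(1)$-type theorem in the spirit of David--Journ\'e--Semmes and Christ. The hypotheses needed are: kernel size and regularity adapted to $\d$; a weak boundedness / cancellation property on normalized bump functions; and the fact that $\CT(1)$ and $\CT^*(1)$ lie in $\mbox{BMO}(\bndry\D,d\l)$. Once $L^2$-boundedness is established via $T(1)$, the extension to $1<p<\infty$ is automatic from the general Calder\'on--Zygmund theory on spaces of homogeneous type, using the doubling estimate $\l(\B_r(w))\approx r^{2n}$ of Proposition \ref{P:surface-balls}.

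The first task is kernel estimates for $K(w,\zp):= \Delta(w,\zp)^{-n}$. The size bound $|K(w,\zp)|\lesssim \d(w,\zp)^{-2n}$ is immediate from strong $\C$-linear convexity. For regularity, fix $w,\zp,\zp'\in\bndry\D$ with $\d(w,\zp)\gtrsim \d(\zp,\zp')$; then by the elementary inequality \eqref{E:cplx-est-a} together with the fact that $|\Delta(w,\zp)-\Delta(w,\zp')|\lesssim |\zp-\zp'|\lesssim \d(\zp,\zp')^{2}$ (using $|w-\zp|\lesssim \d(w,\zp)$ and the Lipschitz character of $\dee\rho$), one obtains
\begin{equation*}
|K(w,\zp)-K(w,\zp')|\ \lesssim\ \frac{\d(\zp,\zp')}{\d(w,\zp)^{2n+1}}.
\end{equation*}
A symmetric argument using Proposition \ref{P:quasi-metric}(b) gives the analogous regularity in the $w$-variable. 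These are standard Calder\'on--Zygmund-type kernel bounds with respect to $\d$.

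The crux is the \emph{cancellation} / weak-boundedness condition, which is the point at which the specific structure of the Cauchy-Leray integral is used, and which Section \ref{S:main-thm} is devoted to. Here one exploits the observation highlighted in the introduction: since $\dee_{w_j}\Delta(w,\zp)^{-n+1} = -(n-1)\Delta^{-n}\cdot \dee_{w_j}\Delta$, the kernel $\Delta^{-n}$ is, up to factors of bounded tangential derivatives of $\rho$, a boundary derivative of the milder kernel $\Delta^{-n+1}$. One writes
\begin{equation*}
\CI(f)(\z) \ =\ \Eop(df)(\z) \ +\ R(f)(\z),
\end{equation*}
where $\Eop$ has kernel of order $\Delta^{-n+1}$ (singularity $\d^{-2n+2}$, hence integrable by \eqref{E:int-est-a}) and $R$ is an acceptable error. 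Stokes' theorem on $\bndry\D$ (whose $C^{1,1}$ version has been justified in Section \ref{S:C11}, notably Proposition \ref{P:3}) legitimizes this identity in the $C^{1,1}$ setting. Applied to a normalized bump $\phi$ supported in a ball $\B_r(w_0)$, we get $\CT(\phi) = \Eop(d\phi) + R(\phi)$, and the size estimates for $\Eop$ together with the integral bounds in Corollary \ref{C:int-ests} produce the required weak boundedness and cancellation conditions. In particular, applying the identity to $\phi\equiv 1$ (combined with the reproducing identity \eqref{E:CL-repr-spec}) yields $\CT(1)=1$ and, after an analogous argument for the transpose, $\CT^{\,*}(1)\in L^\infty\subset\mbox{BMO}$.

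The hard part is precisely the cancellation step in the previous paragraph: giving a pointwise meaning to the second derivatives of $\rho$ appearing in $\dee\rho\wedge(\deebar\dee\rho)^{n-1}$ and in the derivation of $\Delta^{-n}$ from $\Delta^{-n+1}$, and then justifying the Stokes' theorem manipulation, both under the minimal $C^{1,1}$ hypothesis. This is exactly what the restriction theory of Section \ref{S:C11} (Propositions \ref{P:1}--\ref{P:3}, together with Corollaries \ref{C:1}--\ref{C:2}) is designed for: the tangential second derivatives exist a.e.\ on $\bndry\D$, are bounded, and are stable under the smooth approximations $\{F_k\}$, which permits all the identities used above to be obtained as limits from the smooth case. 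Once the cancellation/BMO conditions are in hand, the $T(1)$ theorem on $(\bndry\D,\d,d\l)$ delivers the $L^2$ bound, and Calder\'on--Zygmund interpolation/extrapolation finishes the $L^p$ statement for $1<p<\infty$.
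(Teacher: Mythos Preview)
Your plan is correct and matches the paper's proof essentially step for step: the kernel size/regularity estimates of Section~\ref{SS:ker-ests}, the integration-by-parts identity $\CI(f)=\Eop(df)+\Rop(f)$ (Proposition~\ref{P:basic-identity}) yielding the bump-function cancellation (Proposition~\ref{P:nbf}), the parallel analysis of the adjoint culminating in $\CT^*(1)=\mathfrak h\in L^\infty$ (Corollary~\ref{C:basic-identity-a}), and finally the $T(1)$ theorem on the space of homogeneous type $(\bndry\D,\d,d\l)$.

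One small slip to fix in your kernel-regularity sketch: the inequality $|\zp-\zp'|\lesssim \d(\zp,\zp')^{2}$ is backwards (strong $\C$-linear convexity gives $|\zp-\zp'|\lesssim \d(\zp,\zp')$, while it is $\d(\zp,\zp')^{2}\lesssim |\zp-\zp'|$ that holds, see \eqref{E:useful}). The crude bound $|\Delta(w,\zp)-\Delta(w,\zp')|\lesssim |\zp-\zp'|\lesssim \d(\zp,\zp')$ then only yields $|K(w,\zp)-K(w,\zp')|\lesssim \d(\zp,\zp')\,\d(w,\zp)^{-2n-2}$, which is one power of $\d(w,\zp)$ short of what the $T(1)$ hypotheses require. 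The remedy, as in the paper's proof of \eqref{E:ker-est-b}--\eqref{E:ker-est-d}, is to split off the term $\langle\dee\rho(\zp),\zp'-\zp\rangle$ and use the Lipschitz bound on $\dee\rho$ to get the sharper estimate $|\Delta(w,\zp)-\Delta(w,\zp')|\lesssim \d(\zp,\zp')^{2}+\d(\zp,\zp')\,\d(w,\zp)$, from which the stated regularity $\d(\zp,\zp')/\d(w,\zp)^{2n+1}$ follows.
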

Here $L^p(\bndry\D)$ is taken with respect to the induced Lebesgue measure on $\bndry\D$, or equivalently with respect to the Leray-Levi measure. 
\subsection{The Cauchy-Leray kernel as a derivative}\label{CL-ker-der}
The proof of the theorem relies on two aspects of the operator $\CT$, viewed as a ``singular integral''. First, a certain requisite regularity of the kernel of $\CT$ (that is, the function
 $\Delta (w, \zp)^{-n}$ for $w, \zp\in\bndry\D$) and a proof of this is straight-forward. However the second aspect, key ``cancellation conditions'', is more subtle. Here the turning point is a basic identity that
in effect expresses the Cauchy-Leray kernel as an appropriate derivative. Remarkably such an identity can hold only for $n>1$, because a one-dimensional analogue would necessarily involve a logarithmic term, invalidating its use below.
The identity is expressed in terms of two operators, $\Eop$ (an essential part) and $ \Rop$ 
(a remainder). The operator $\Eop$ acts on one-forms $\omega$ on $\bndry\D$ that have continuous coefficients; it maps these to continuous functions on $\overline\D$, and is defined by
\begin{equation}\label{E:Sop-def}
\Eop (\omega) (z)= c_n\!\!\!\!\int\limits_{w\in\bndry\D}\!\!\!\!
\Delta(w, z)^{-n+1}\,\omega\wedge j^*(\deebar\dee\rho)^{n-1},\quad z\in\overline{\D}
\end{equation}
where $c_n = 1/[(n-1)(2\pi i)^n]$. Observe that the singularity of the kernel of 
\eqref{E:Sop-def} is better by a factor of $\Delta (w, z)$ than that of the Cauchy-Leray integral, and so in particular the integral is absolutely convergent, by \eqref{E:int-est-a}.

The ``remainder'' operator $\Rop$ maps continuous functions on $\bndry\D$ to continuous functions on $\overline\D$ and is defined by
\begin{equation}\label{E:Eop-def}
\Rop (f) (z)= \frac{1}{(2\pi i)^n}\!\!\!\int\limits_{w\in\bndry\D}\!\!\!\!
\Delta(w, z)^{-n}\,f(w)\, j^*(\prec\alpha (w), w-z\!\succ)\wedge j^*(\deebar\dee\rho)^{n-1},\quad z\in\overline{\D}
\end{equation}
where now $\prec\alpha (w), w-z\!\succ = \sum\limits_{j=1}^n(w_j-z_j)\,\alpha_j(w)$ designates a 1-form with each $\alpha_j $ an appropriate 1-form on $\bndry\D$ with bounded measurable coefficients (to be specified below). Note that since
 $|\Delta (w, z)|^{-n}|w-z|\lesssim \d(w, z)^{-2n +1}$ by \eqref{E:useful}, the integral 
 \eqref{E:Eop-def} is also absolutely convergent. The basic identity in question is as follows.
 \begin{proposition}\label{P:basic-identity}
 If $f$ is a $C^1$ function on $\bndry\D$ then
 \begin{equation*}\label{E:weak-basic-identity}
 \CT(f) (\zp)= \Eop (df) (\zp)+ \Rop (f)(\zp), \quad \zp\in\bndry\D.
 \end{equation*}
 \end{proposition}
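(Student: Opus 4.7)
The plan is to prove the identity first for $z \in \D$ (so that the singularity at $w = \zp$ is not yet an issue), then pass to the nontangential limit $z \to \zp$ on the boundary. The algebraic heart of the argument is the observation that the Cauchy--Leray kernel $\Delta^{-n}$ is the $d_w$-derivative of the milder kernel $\Delta^{-n+1}$, modulo a term carrying the factor $(w-z)$. Specifically, using $d(\deebar\dee\rho)^{n-1}=0$ together with
$$d_w\Delta(w,z) \;=\; \dee\rho(w) \,+\, \sum_{j=1}^n(w_j-z_j)\,\alpha_j(w), \qquad \alpha_j(w):=d_w\!\left(\frac{\dee\rho}{\dee w_j}\right)\!(w),$$
one obtains the pointwise rearrangement
$$\frac{\dee\rho\wedge(\deebar\dee\rho)^{n-1}}{\Delta(w,z)^{n}} \;=\; -\frac{1}{n-1}\,d_w\!\left[\frac{(\deebar\dee\rho)^{n-1}}{\Delta(w,z)^{n-1}}\right] \;-\; \frac{\prec\alpha(w),\,w-z\!\succ\wedge(\deebar\dee\rho)^{n-1}}{\Delta(w,z)^n}.$$
This identity requires $n\geq 2$: if $n=1$ the factor $1/(n-1)$ would force a logarithmic antiderivative instead.

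\textbf{Step 1 (Smooth case, $z\in\D$).} Assume momentarily that $\rho$ is $C^\infty$. Multiply the above identity by $f$, pull back via $j$ to $\bndry\D$, and integrate. Since $\bndry\D$ is a closed manifold, Stokes' theorem gives $\int_{\bndry\D} d(f\,\omega)=0$ for any $(2n-2)$-form $\omega$, so
$$\int_{\bndry\D}\! f\, j^{\ast}d_w\!\bigl[\Delta^{-n+1}(\deebar\dee\rho)^{n-1}\bigr] \;=\; -\int_{\bndry\D}\! j^{\ast}(df)\wedge j^{\ast}\bigl[\Delta^{-n+1}(\deebar\dee\rho)^{n-1}\bigr].$$
Inserting this into the rearrangement above and using the definitions of $\CI$, $\Eop$, $\Rop$ (with the sign convention in $\alpha$ chosen to match) yields $\CI(f)(z)=\Eop(df)(z)+\Rop(f)(z)$ for $z\in\D$.

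\textbf{Step 2 (Passage from $C^\infty$ to $C^{1,1}$ at interior points).} For $\rho$ merely of class $C^{1,1}$, apply Step~1 to the smooth approximating sequence $\{\rho_k\}$ of Proposition~\ref{P:2} and let $k\to\infty$. For each fixed $z\in\D$, the denominators $\Delta_k(w,z)$ converge uniformly in $w\in\bndry\D$ to $\Delta(w,z)$ and are bounded away from $0$ by strong $\mathbb C$-linear convexity, so $\Delta_k^{-n}$ and $\Delta_k^{-n+1}$ converge uniformly on $\bndry\D$. By Proposition~\ref{P:3}, Corollary~\ref{C:2}, and the analogue of Section~\ref{SS:1.8} applied componentwise to $d(\dee\rho/\dee w_j)$, the forms $j^{\ast}(\deebar\dee\rho_k)^{n-1}$ and $j^{\ast}\alpha_j^k$ converge pointwise a.e.\ and boundedly on $\bndry\D$ to their $L^\infty$ counterparts defined by the restriction procedure of Section~\ref{S:C11}. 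Dominated convergence then passes the identity to the $C^{1,1}$ setting for every $z\in\D$.

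\textbf{Step 3 (Boundary limit).} Set $z_\epsilon=\zp+\epsilon\nu_{\zp}$ and let $\epsilon\downarrow 0$. The left-hand side satisfies $\CI(f)(z_\epsilon)\to\CT(f)(\zp)$ by Proposition~\ref{P:repr-hol-fncts}(c), so it remains to show the analogous convergence on the right. By \eqref{E:useful}, the kernel of $\Eop$ is $O(|\Delta|^{-n+1})=O(\d(w,\zp)^{-2n+2})$ and the kernel of $\Rop$ is $O(|\Delta|^{-n}|w-z|)=O(\d(w,\zp)^{-2n+1})$; both are integrable with respect to $d\lambda$ near $\zp$ by Corollary~\ref{C:int-ests}. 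Lemma~\ref{L:Delta-epsilon} furnishes the uniform lower bound $|\Delta(w,z_\epsilon)|\gtrsim|\Delta(w,\zp)|+\epsilon\gtrsim\d(w,\zp)^2$, so the integrands are dominated uniformly in $\epsilon$ by the integrable majorants above. Dominated convergence delivers $\Eop(df)(z_\epsilon)\to\Eop(df)(\zp)$ and $\Rop(f)(z_\epsilon)\to\Rop(f)(\zp)$, yielding the desired identity on $\bndry\D$.

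\textbf{Anticipated main obstacle.} Step~2 is the delicate point: one must legitimize a Stokes-theorem computation whose classical form requires $C^2$ regularity of $\rho$, and then show that the integration by parts survives when the second derivatives are interpreted only as bounded measurable coefficients of the restricted forms $j^{\ast}(\deebar\dee\rho)$ and $j^{\ast}\alpha_j$. The viability of this limit rests entirely on the uniform $C^{1,1}$ bound in item~(1) of Proposition~\ref{P:2}, which provides the dominating constants, together with the a.e.\ convergence of $\nabla^2_T\rho_k$ from item~(3) of that proposition and from Corollary~\ref{C:2}.
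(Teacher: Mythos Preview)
Your proposal is correct and follows essentially the same route as the paper: derive the algebraic identity expressing $\Delta^{-n}\dee\rho$ as $(1-n)^{-1}d_w(\Delta^{-n+1})$ plus the remainder carrying $\prec\alpha,w-z\succ$, run Stokes' theorem on $\bndry\D$ with the smooth approximants $\rho_k$, pass to the limit $k\to\infty$ at interior points via the a.e.\ bounded convergence supplied by Propositions~\ref{P:2}--\ref{P:3} and Corollary~\ref{C:2}, and then let $z\to\zp$ along the inner normal. One small sharpening: for the convergence of $\alpha_j^k=j^*d_w(\dee\rho_k/\dee w_j)$ the paper invokes Proposition~\ref{P:approx-LF} (the Lipschitz version of the restriction result) rather than Section~\ref{SS:1.8} directly, since $\dee\rho/\dee w_j$ is only Lipschitz; and in Step~3 the paper simply appeals to the already-noted continuity of $\Eop$ and $\Rop$ on $\overline{\D}$ rather than reproving it, but your dominated-convergence argument is exactly what justifies that continuity.
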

 Note that by \eqref{E:CI-CT}, the proposition is in fact a consequence of the identity
 \begin{equation}\label{E:basic-identity}
 \CI(f) (z)= \Eop (df) (z)+ \Rop (f)(z), \quad z\in\overline\D.
 \end{equation}
 To prove \eqref{E:basic-identity}, we start with the formal identity that reveals the extent to which the Cauchy-Leray kernel is a derivative. For $z\in\D$ and $n>1$, we have
 \begin{equation*}
 (1-n)^{-1}\, d_w\big(\Delta (w, z)^{-n+1}\big) =
\Delta(w, z)^{-n}\left\{\dee\rho (w) + \sum\limits_{j=1}^n(w_j-z_j)\, 
d_w\!\left(\frac{\dee\rho}{\dee w_j}(w)\right)\right\}
 \end{equation*}
  where $d_w$ is the exterior derivative on $\Cn$. (Here we regard $w$ as a variable in $\Cn\setminus \{z\}$.) This follows once we recall that
 \begin{equation*}
 \Delta (w, z) =\sum\limits_{j=1}^n\frac{\dee\rho (w)}{\dee w_j} (w_j-z_j),\quad \mbox{and}\quad 
 \dee\rho (w) =\sum\limits_{j=1}^n\frac{\dee\rho (w)}{\dee w_j} dw_j\, .
 \end{equation*}
 Applying the pullback via the inclusion $j: \bndry\D \hookrightarrow \Cn$ we obtain
 \begin{equation}\label{E:ker-deriv}
 \Delta(w, z)^{-n}j^*\dee\rho (w) = (1-n) j^* d_w\big(\Delta(w, z)^{-n+1}\big) -
 \Delta(w, z)^{-n}\sum\limits_{j=1}^n(w_j-z_j)\,\alpha_j(w),
  \end{equation}
where now $w$ is in $\bndry\D$ and $d_w$ denotes the exterior derivative for $\bndry\D$ viewed as a manifold in its own right,  and
 \begin{equation*}
 \alpha_j (w)\, =\, j^* d_w\!\left(\frac{\dee\rho(w)}{\dee w_j}\right).
 \end{equation*}
 Observe that for $z$ near $w$ the second term on the right-hand side of \eqref{E:ker-deriv} is negligible in size compared to the left-hand side. It is in this sense that the Cauchy-Leray kernel is a derivative.
 
 To utilize \eqref{E:ker-deriv} we use the approximations $\{\rho_k\}$ of the $C^{1,1}$ function 
 $\rho$ given us by Proposition \ref{P:2} and by Corollary \ref{C:2}; we also use Proposition
 \ref{P:approx-LF} applied to the Lipschitz functions $\dee\rho/\dee w_j $, $1\leq j\leq n$. 
 With $z$ fixed in $\D$ we know that $\Delta_k(w, z) =\langle\dee\rho_k (w), w-z\rangle\neq 0$
 for all $w\in\bndry\D$ and $k$ sufficiently large. Hence the analogue of \eqref{E:ker-deriv} holds 
 with $\rho$ replaced by $\rho_k$, and $\Delta (w, z)$ replaced by $\Delta_k(w, z)$, 
 and with 
 $\alpha_{j, k}$ defined as $\alpha_j$ but with $\rho_k$ in place of $\rho$. We now take the resulting version of \eqref{E:ker-deriv} and wedge it with
  $f(w)\,j^*(\deebar\dee\rho_k)^{n-1}$, where 
 $f$ is a $C^1$ function on $\bndry\D$. Note that
 \begin{equation}\label{E:deriv}
 (1-n)\,[\,j^*d_w\big(\Delta_k(w, z)^{-n+1}\big)]\wedge j^*(\deebar\dee\rho_k(w))^{n-1} = d_w\,\omega_k
 \end{equation}
 with $\omega_k= j^*[\Delta_k(w, z)^{-n+1}(\deebar\dee\rho_k(w))^{n-1}]$,
  because $\deebar\dee = d\dee$, $d_w\circ d_w=0$ and $j^*d_w=d_w j^*$.
Next we integrate over $\bndry\D$ and carry out the integration by parts (Stokes' theorem on the manifold $M=\bndry\D$, which has empty boundary) for the first term in the right-hand side of the
(``wedged'') $k$-analogue of \eqref{E:ker-deriv} expressed in terms of  \eqref{E:deriv}.
 The result is
 \begin{equation}\label{E:post-Stokes}
 \int\limits_{w\in\bndry\D}\!\!\!\!
 f(w)\,\Delta_k(w, z)^{-n}j^*(\dee\rho_k\wedge(\deebar\dee\rho_k)^{n-1})(w) =
 \end{equation}
 \begin{equation*}
 \frac{1}{(n-1)}\!\!\!\!\int\limits_{w\in\bndry\D}\!\!\!\!\!\Delta_k(w, z)^{-n+1}(d_wf)\wedge j^*(\deebar\dee\rho_k)^{n-1} +\!\!\!
  \int\limits_{w\in\bndry\D}\!\!\!\!\!
  \Delta_k(w, z)^{-n}f(w)\prec\alpha_k(w), w-z\!\succ\wedge j^*(\deebar\dee\rho_k)^{n-1}\!
 \end{equation*}
 with $\prec\alpha_k(w), w-z\!\succ = \sum\limits_{j=1}^n(w_j-z_j)\alpha_{j, k}(w)$.
 
 Now a passage to the limit as $k\to\infty$ gives us the analogue of \eqref{E:post-Stokes}, with
 $\rho_k$ replaced by $\rho$, $\Delta_k(w, z)$ replaced by $\Delta(w, z)$ and $\alpha_k(w)$
 replaced by $\alpha (w)$. This shows that the identity
 \begin{equation*}
 \CI(f)(z)= \Eop (df)(z) + \Rop (f)(z)
 \end{equation*}
 holds for $z\in\D$. A second passage to the limit, using that $F(z) = \CI(f)(z)$ is continuous on $\overline{\D}$ then establishes this for $z\in\bndry\D$, as well, and the proposition is proved.
 \subsection{Cancellation conditions}\label{SS:cancellation} These are expressed in terms of the
action of $\CT$ on certain ``test'' functions. We fix some $\alpha>0$ and say that a function 
$f$ defined on $\bndry\D$ is a {\em normalized bump function associated to a ball $\B_r = \B_r(\wp)$} (with $\wp\in\bndry\D$), if
\begin{equation}\label{E:bf-a}
f\ \ \mbox{is supported in}\ \B_r\, ,
\end{equation}
 \begin{equation}\label{E:bf-b}
 |f(w)|\leq 1\ \ \mbox{for all}\ w\in\bndry\D
\end{equation}
and
\begin{equation}\label{E:bf-c}
|f(w_1)- f(w_2)|\leq \left(\frac{\d(w_1, w_2)}{r}\right)^\alpha\ \ \mbox{for all}\ w_1, w_2\in\bndry\D.
\end{equation}
\begin{proposition}\label{P:nbf}
If $f$ is a normalized bump function, then
\begin{equation}\label{E:nbf-a}
\sup\limits_{z\in\bndry\D}|\CT (f) (z)|\lesssim 1\, , \qquad \mbox{and}
\end{equation}
\begin{equation}\label{E:nbf-b}
\|\CT (f)\|_{L^2(\bndry\D, \,d\l)}\lesssim r^{n}\, .
\end{equation}
\end{proposition}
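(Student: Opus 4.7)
Plan: For the pointwise bound \eqref{E:nbf-a} we exploit Proposition \ref{P:basic-identity}, which combined with \eqref{E:CI-CT} gives $\CT(f)(\zp) = \Eop(df)(\zp) + \Rop(f)(\zp)$ for $f\in C^1(\bndry\D)$. The crucial gain is the improved singularity $\Delta^{-n+1}$ of the kernel of $\Eop$. For a Lipschitz normalized bump (the case $\alpha=1$) we have $\|df\|_\infty\lesssim 1/r$ and $\mbox{supp}(f)\subset \B_r(\wp)$; using $|\Delta(w,\zp)|\gtrsim\d(w,\zp)^2$ from \eqref{E:Delta-bound} and $|w-\zp|\lesssim\d(w,\zp)$ from \eqref{E:useful}, we obtain pointwise bounds of the form $|\Eop(df)(\zp)|\lesssim r^{-1}\int_{\B_r(\wp)}\d(w,\zp)^{-2n+2}\,d\lambda(w)$ and $|\Rop(f)(\zp)|\lesssim\int_{\B_r(\wp)}\d(w,\zp)^{-2n+1}\,d\lambda(w)$. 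Each integral is then split according to whether $\zp\in\B_{Cr}(\wp)$ (in which case the integration region lies inside $\B_{C'r}(\zp)$ and \eqref{E:int-est-a} applies with $\epsilon=2$ or $\epsilon=1$) or $\zp\notin\B_{Cr}(\wp)$ (in which case the quasi-triangle inequality from Proposition \ref{P:quasi-metric} gives $\d(w,\zp)\gtrsim\d(\wp,\zp)$ and one uses $\lambda(\B_r)\lesssim r^{2n}$ from Proposition \ref{P:surface-balls}); in every case the total bound is $\lesssim r\lesssim 1$. Bumps of general H\"older exponent $\alpha<1$ are handled by approximation by $C^1$ bumps.

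For \eqref{E:nbf-b} we decompose $\bndry\D=\B_{Cr}(\wp)\cup\B_{Cr}(\wp)^c$ for $C$ sufficiently large. Inside $\B_{Cr}(\wp)$, the pointwise estimate \eqref{E:nbf-a} together with $\lambda(\B_{Cr}(\wp))\lesssim r^{2n}$ (Proposition \ref{P:surface-balls}) gives $\int_{\B_{Cr}(\wp)}|\CT(f)|^2\,d\lambda\lesssim r^{2n}$. On the complement, $f(\zp)=0$ by \eqref{E:bf-a}, so \eqref{E:limit-a-out} combined with $\d(w,\zp)\gtrsim\d(\wp,\zp)$ for $w\in\B_r(\wp)$ yields the decay bound $|\CT(f)(\zp)|\lesssim r^{2n}\,\d(\wp,\zp)^{-2n}$. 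Squaring and integrating via \eqref{E:int-est-b} with $\epsilon=2n$ gives $\int_{\B_{Cr}(\wp)^c}|\CT(f)|^2\,d\lambda\lesssim r^{4n}\cdot r^{-2n}=r^{2n}$, and \eqref{E:nbf-b} follows on taking square roots.

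The principal obstacle is the passage from $C^1$ bumps to general H\"older bumps in \eqref{E:nbf-a}. Since the distributional derivative of a H\"older function is not in $L^\infty$, naive mollification produces $C^1$ approximants whose derivatives blow up as the regularization parameter tends to zero, which is fatal for the direct estimate on $\Eop(df)$. One should instead mollify at a scale proportional to $r$ to obtain a Lipschitz bump $\widetilde f$ with Lipschitz constant $\lesssim 1/r$ that stays within a small constant of $f$ in sup norm, and then handle the remainder $\CT(f-\widetilde f)$ separately by returning to formula \eqref{E:limit-a} and exploiting the H\"older regularity of $f-\widetilde f$. Alternatively, one can bypass Proposition \ref{P:basic-identity} and argue directly from \eqref{E:limit-a}, using the reproducing identity $\CT(1)(\zp)\equiv 1$ to absorb the potentially logarithmic contribution to the ``far'' part of the integral arising when $\zp\in\mbox{supp}(f)$ is paired with $w$ far from $\zp$.
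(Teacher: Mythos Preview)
Your argument for \eqref{E:nbf-b} is correct and matches the paper's. The gap is in \eqref{E:nbf-a}, specifically in the passage to $C^1$ bumps.

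The claim that a normalized bump with $\alpha=1$ satisfies $\|df\|_\infty\lesssim 1/r$ is not correct. Condition \eqref{E:bf-c} is stated with respect to the quasi-metric $\d$, and by \eqref{E:useful} one has $\d(w_1,w_2)\lesssim |w_1-w_2|^{1/2}$; hence a $\d$-Lipschitz function is only Euclidean H\"older-$1/2$ in the complex-normal direction $x_n$, and need not be $C^1$. The geometry of $\B_r(\wp)$ is anisotropic: its extent is $\approx r$ in the $z'$ directions but only $\approx r^2$ in $x_n$ (see \eqref{E:shape-balls}), so a $C^1$ function supported in $\B_r(\wp)$ with $|f|\approx 1$ must have $|\nabla f|\approx 1/r^2$, not $1/r$; this is exactly the hypothesis of Lemma~\ref{L:nbf}. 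Your mollification proposal does not close the gap: to keep the support inside $\B_{Cr}(\wp)$ one needs Euclidean mollification scale $\delta\lesssim r^2$ (a Euclidean $\delta$-neighborhood enlarges the $\d$-radius by $\delta^{1/2}$), but then the oscillation of $f$ over a $\delta$-ball is $\lesssim(\delta^{1/2}/r)^\alpha=O(1)$, so $\|f-\widetilde f\|_\infty$ is not small and the remainder $\CT(f-\widetilde f)$ is as hard as $\CT(f)$ itself. Your alternative of working directly from \eqref{E:limit-a} runs into the logarithmic divergence you anticipated: when $\zp=\wp$ the tail contributes $|f(\wp)|\int_{\bndry\D\setminus\B_r(\wp)}\d(w,\wp)^{-2n}d\lambda\approx\log(1/r)$, and the identity $\CT(1)=1$ gives no extra cancellation here because it has already been used in deriving \eqref{E:limit-a}.

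The paper's fix is not a global smooth approximation of $f$. One first proves the bound only at the center $\wp$. There one sets $f_0(w)=f(\wp)\chi(w)$ with $\chi(w)=\chi_0(w_n/cr^2)$ a cutoff depending only on the complex-normal coordinate in the special frame at $\wp$; then $|\nabla f_0|\lesssim 1/r^2$ and $f_0$ is supported in $\B_r(\wp)$, so Lemma~\ref{L:nbf} (your $\Eop+\Rop$ estimate with the correct exponent $1/r^2$) yields $|\CT(f_0)(\wp)|\lesssim 1$. The decisive feature is that $(f-f_0)(\wp)=0$ and $f-f_0$ is supported in $\B_r(\wp)$ with $|(f-f_0)(w)|\lesssim(\d(w,\wp)/r)^\alpha$; plugging into \eqref{E:limit-a} gives $|\CT(f-f_0)(\wp)|\lesssim r^{-\alpha}\int_{\B_r(\wp)}\d(w,\wp)^{-2n+\alpha}d\lambda\lesssim 1$ by \eqref{E:int-est-a}, with \emph{no} tail because the integrand vanishes outside $\B_r(\wp)$. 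The case $\d(\wp,z)\leq c_1r$ then follows by re-centering (view $f$ as a bump for a ball centered at $z$), and the far case $\d(\wp,z)>c_1r$ is exactly your kernel estimate via \eqref{E:limit-a-out}.
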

The second inequality asserts that insofar as $\CT$ is tested on these bump functions, it is bounded on $L^2$. (Note that $\l (\B_r)\approx r^{2n}$, by Proposition \ref{P:surface-balls}.)

The proof of the proposition will require that we first prove a conclusion of the same sort but under more restrictive conditions.
\begin{lemma}
\label{L:nbf}
Suppose $f_0$ is a $C^1$ function on $\bndry\D$ that is supported in $\B_r(\wp)$ and that satisfies the following conditions: $|f_0|\leq 1$ and $|\nabla f_0|\leq 1/r^2$. 
Then $|\CT(f_0)(\wp)|\lesssim 1$.
\end{lemma}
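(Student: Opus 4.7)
The plan is to apply the basic identity of Proposition \ref{P:basic-identity} to reduce the desired pointwise bound on $\CT(f_0)(\wp)$ to two absolutely convergent integrals, both of whose kernels are weaker than that of $\CT$ itself. Writing
\begin{equation*}
\CT(f_0)(\wp) \;=\; \Eop(df_0)(\wp) \;+\; \Rop(f_0)(\wp),
\end{equation*}
it suffices to bound each term by a constant independent of $r$ (and of $\wp$). Here I would use that $j^*(\deebar\dee\rho)^{n-1}$ has bounded coefficients (Proposition \ref{P:3} applied iteratively) and that the 1-forms $\alpha_j(w)$ appearing in $\Rop$ have bounded measurable coefficients, so that the sizes of the two integrands are controlled purely by the sizes of the scalar kernels $|\Delta(w,\wp)|^{-n+1}$ and $|\Delta(w,\wp)|^{-n}|w-\wp|$, respectively.

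For the remainder term, the support condition $f_0\subset \B_r(\wp)$ together with $|f_0|\leq 1$ and the inequalities $|w-\wp|\lesssim \d(w,\wp)$ from \eqref{E:useful} and $|\Delta(w,\wp)| = \d(w,\wp)^2$ from \eqref{E:dist-def} give
\begin{equation*}
|\Rop(f_0)(\wp)| \;\lesssim\; \int\limits_{\B_r(\wp)}\!\! \d(w,\wp)^{-2n+1}\, d\l(w).
\end{equation*}
Applying the integral estimate \eqref{E:int-est-a} with $\epsilon=1$ yields a bound of $O(r)$, which is $\lesssim 1$ since $r$ is bounded.

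The more delicate bound is on $\Eop(df_0)(\wp)$. Here the hypothesis $|\nabla f_0|\leq 1/r^2$ is designed to exactly balance the integration: expressing the pullback $j^*(df_0)$ in any local frame of $T^*\bndry\D$ gives coefficients bounded by $1/r^2$, hence
\begin{equation*}
|\Eop(df_0)(\wp)| \;\lesssim\; \frac{1}{r^2}\!\!\int\limits_{\B_r(\wp)}\!\! \d(w,\wp)^{-2n+2}\, d\l(w) \;\lesssim\; \frac{1}{r^2}\cdot r^2 \;=\; 1,
\end{equation*}
where the last inequality is \eqref{E:int-est-a} with $\epsilon=2$. Summing the two estimates yields $|\CT(f_0)(\wp)|\lesssim 1$.

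I do not expect a serious obstacle here, since the conceptual work has already been done in establishing the identity \eqref{E:basic-identity} — which is precisely the manifestation of the fact, highlighted in Section \ref{CL-ker-der}, that the Cauchy-Leray kernel is (essentially) an exact derivative, a fact available only because $n>1$. The only care needed is to note that, although $\Delta(w,\wp)$ vanishes at $w=\wp$ with the same rate $\d(w,\wp)^2$ as in the definition of the quasi-metric, the passage from $\Delta(w,z)$ on $\D$ to $\Delta(w,\wp)$ on the boundary is legitimate because \eqref{E:basic-identity} extends continuously to $\overline\D$ (as already verified in the proof of Proposition \ref{P:basic-identity}), and the boundary integrals are absolutely convergent in view of \eqref{E:int-est-a}.
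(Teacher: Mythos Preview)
Your proof is correct and follows essentially the same route as the paper: apply Proposition \ref{P:basic-identity} at $\wp$, bound $\Eop(df_0)(\wp)$ using $|\nabla f_0|\le 1/r^2$ together with \eqref{E:int-est-a} for $\epsilon=2$, and bound $\Rop(f_0)(\wp)$ using $|w-\wp|\lesssim \d(w,\wp)$ together with \eqref{E:int-est-a} for $\epsilon=1$. The only cosmetic difference is that the paper integrates the $\Rop$ term over all of $\bndry\D$ (taking $r=1$ in \eqref{E:int-est-a}), whereas you restrict to $\B_r(\wp)$ and obtain the slightly sharper $O(r)$; either way the bound is $\lesssim 1$.
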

The proof of this lemma is a direct consequence of Proposition \ref{P:basic-identity}: if we set
$\zp=\wp$, we see by \eqref{E:Sop-def} that 
\begin{equation*}
|\Eop (df_0) (\wp)|\leq \frac{c_n}{r^2}\!\!\!\!\!\!\!\int\limits_{w\in\B_r(\wp)}\!\!\!\!\!\!\!
\d(w, \wp)^{-2n+2}\, d\l (w)
\end{equation*}
since $|df_0(w)|\lesssim |\nabla f_0(w)|\lesssim 1/r^2$. As a result $|\Eop (df_0)(\wp)|\leq c$, in view of \eqref{E:int-est-a} with $\epsilon =2$. Next, by \eqref{E:Eop-def},
\begin{equation*}
|\Rop (f_0) (\wp)|\leq c_n\!\!\!\int\limits_{w\in\bndry\D}\!\!\!\!
|\Delta (w, \wp)|^{-n}|w-\wp|\,d\l(w)\leq c'\, .
\end{equation*}
This follows if we use the facts that $|\Delta (w, z)|=\d(w, z)^2$ and $|w-z|\leq c\,\d(w,z)$, together with \eqref{E:int-est-a} with $r=1$ and $\epsilon=1$. Therefore Lemma \ref{L:nbf} is established.
 
 We turn to the proof of Proposition \ref{P:nbf}. We prove first the key conclusion:
 \begin{equation}\label{E:key-a}
 |\CT (f)(\wp)|\lesssim 1
 \end{equation}
 that is, the result when $z$ is the center of the ball $\B_r$. To do this, we choose a coordinate system 
 as in Section \ref{SS:strong C-lin-convex}, now centered at $\wp$, and so the coordinates at $\wp$ are $(0,0,\ldots, 0)$. 
  Next we choose a $C^1$ function $\chi_0$ on $\mathbb C$, so that $\chi_0(u)=1$, if $|u|\leq 1/2$ and $\chi_0(u) =0$ if $|u|\geq 1$, 
 and in these coordinates we set $\chi (w) = \chi_0(w_n/cr^2)$.
 Here $c$ is a small constant that will be chosen below. Then $\chi (w)$ is supported where $|w_n|\leq cr^2$, and because $|\Delta (w, \wp)|\approx |w_n|$, see  \eqref{E:Delta-repr} and Proposition \ref{P:quasi-metric}, and
 $|\Delta (w, \wp)|=\d(w, \wp)^2$, we see that $\chi (w)$ is supported in $\B_r(\wp)$, once we take $c$ sufficiently small. Also, if we calculate $|\nabla\chi (w)|$ in these coordinates 
  we see clearly that $|\nabla\chi(w)|\lesssim 1/r^2$.
 
 We now define $f_0$ by $f_0(w) =f(\wp)\chi(w)$. Then it follows that up to a constant multiple, the function $f_0$ satisfies the hypotheses of the lemma just proved. However $\CT(f)=
 \CT(f_0) + \CT(f-f_0)$, and $|\CT(f_0)(\wp)|\lesssim 1$ by the lemma. Next, since $f(\wp) - f_0(\wp)=0$, we have by \eqref{E:limit-a} that
 \begin{equation}\label{E:aux-repr-a}
 \CT(f-f_0)(\wp)= \int\limits_{w\in\bndry\D}\!\!\! \frac{f(w)-f_0(w)}{\Delta(w, \wp)^n}\, d\l(w)\, .
 \end{equation}
 Now $f(w)-f_0(w) = f(w) - f(\wp) + f(\wp)[1-\chi(w)]$. However by assumption,
 $|f(w) - f(\wp)|\leq (\d(w, \wp)/r)^\alpha$, while
  $|1-\chi(w)|\lesssim |w_n|/r^2\approx \d(w, \wp)^2/r^2$, and the latter is majorized by
  $(\d(w, \wp)/r)^\alpha$, 
     for $w\in\B_r(\wp)$. Also, both $f$ and $f_0$ are supported in $\B_r(\wp)$, so that the integral \eqref{E:aux-repr-a} is  majorized by a multiple of
  \begin{equation*}
  r^{-\alpha}\!\!\!\!\int\limits_{w\in\B_r(\wp)}\!\!\!\!\!
  \frac{d\l(w)}{\d(w,\wp)^{2n-\alpha}},
  \end{equation*}
  which is bounded in $r$, by \eqref{E:int-est-a}. Therefore \eqref{E:key-a} is proved.
  
  We consider next the case when $z$ is such that $\d(\wp, z)\leq c_1r$, with $c_1$ a large constant to be chosen below. In this case, for such $z$, we may think of our given $f$ as a 
  multiple of a normalized bump function associated to a ball $\B_{c_2r}(z)$, centered at $z$, for an appropriate choice of $c_2$. In fact, if $\d (\wp, w)\leq c_1r$ and $\d(z, \wp)\leq r$, the triangle inequality $\d(w, z)\leq c(\d(\wp, z) + \d(w, \wp))$, shows that $\d(w, z)\leq c_2r$, with $c_2=c(c_1+1)$. So $f$ is supported in $\B_{c_2r}(z)$. Also a constant multiple of 
  $f$ clearly satisfies the requisite inequalities in the hypotheses of Proposition \ref{P:nbf}. Thus by the special case just proved, we have
  \begin{equation*}
 | \CT (f) (z) |\lesssim 1,\quad \mbox{if}\ \ \d(\wp, z)\leq c_1r.
  \end{equation*}
  Finally we consider the case when $z$ is such that $\d(\wp, z)>c_1r$. 
  By the triangle inequality again, 
  $\d(\wp, z)\leq c(\d(w, z) +\d(w, \wp))$, we obtain
  \begin{equation}\label{E:aux-ineq-b}
  \d(w, z)\gtrsim \d(\wp, z),\quad \mbox{if}\ \ \d(\wp, z)\geq c_1\d(\wp, w),\  \mbox{with}\ c_1=2c
  \end{equation}
   Since $\d(\wp, w)\leq r$ when $w$ is in $\B_r(\wp)$ (the support of $f$), then if $\d(z, \wp)\geq c_1r$, the inequality 
  \eqref{E:aux-ineq-b} shows us that $\d(w, z)\gtrsim \d(\wp, w)$ for that $z$. So in this case, since $f(z)=0$, the formula \eqref{E:limit-a-out} grants that 
  \begin{equation*}
  \CT(f)(z) =\int\limits_{w\in\bndry\D}\!\!\!\!
  \frac{f(w)}{\Delta(w, z)^n}\, d\l (w)\, ,
  \end{equation*}
  and since $|\Delta(w, z)|=\d(w, z)^2$, this shows that $|\CT (f) (z)|$ is  majorized by a multiple of 
  \begin{equation*}
  \frac{\lambda(\B_r(\wp))}{\d(\wp, w)^{2n}}\, .
    \end{equation*}
  As a consequence
  \begin{equation}\label{E:aux-ineq-c}
  |\CT(f)(z)|\lesssim r^{2n}\d(\wp, w)^{-2n},\quad \mbox{if}\ \ \d(\wp, w)\geq c_1r\, .
  \end{equation}
  The right-hand side of \eqref{E:aux-ineq-c} is in particular bounded by 1, and this completes the proof of \eqref{E:nbf-a}. The proof of \eqref{E:nbf-b} follows immediately: we write
  \begin{equation*}
  \int\limits_{z\in\bndry\D}\!\!\!\!
  |\CT (f)(z)|^2\,d\l (z) = I+II
  \end{equation*}
  where $I$ is the integral over $\B_{c_1r}(\wp)$ and $II$ the integral over the complement. By 
    \eqref{E:nbf-a}, we have
   $|I|\lesssim \l (\B_{c_1r}(\wp))\approx r^{2n}$. For $II$ one has that this integral is dominated by a multiple of 
  \begin{equation*}
  r^{4n}\!\!\!\!\!\!\!\!\!\!\int\limits_{z\in\B_{c_1r}(\wp)^c}\!\!\!\!\!\!\!\!\!
  \d(\wp, z)^{-4n}\, d\l (z)
  \end{equation*}
  which in turn is  majorized by a multiple of $r^{2n}$ by \eqref{E:int-est-b} (for $\epsilon = 2n$). Combining the estimates for $I$ and $II$ proves \eqref{E:nbf-b} and hence the proposition.\\
  
  \noindent{\bf Remark.}
 The argument above can also be used to show that
  $$\sup\limits_{z\in\bndry\D}|\CT (f_0)(z)|\lesssim 1$$ for $f_0$ as in Lemma \ref{L:nbf}.\\

   At this point we record a further cancellation condition, one already used before, namely
  \begin{equation}\label{E:cancellation}
  \CT (1) = 1.
  \end{equation}
  (See, for example
     \eqref{E:limit-a}.)
  \subsection{Kernel estimates}\label{SS:ker-ests}
  Let us take $K(w, z)$ to be the function defined for $w, z\in\bndry\D$, with $w\neq z$, by
  \begin{equation*}
  K(w, z) = \frac{1}{\Delta(w, z)^n}\, .
  \end{equation*}
  This function is the ``kernel'' of the operator $\CT$, in the sense that
  \begin{equation*}
  \CT (f) (z) = \int\limits_{w\in\bndry\D}\!\!\!\!
  K(w, z)f(w)\,d\l (w)\, ,
  \end{equation*}
  whenever $z$ lies outside of the support of $f$, and $f$ satisfies \eqref{E:holder-d}. This is evident from \eqref{E:limit-a}. The size and regularity estimates that are relevant for us are:
  \begin{equation}\label{E:ker-est-a}
  |K(w, z)|\lesssim \d (w, z)^{-2n}.
  \end{equation}
  \begin{equation}\label{E:ker-est-b}
  |K(w_1, z) -K(w_2, z)|\lesssim \d(w_1, z)^{-2n}\,\frac{\d(w_1, w_2)}{\d(w_1, z)}\
   \quad \mbox{if}\ \
  \d(w_1, z)\geq c_1\d(w_1, w_2)
  \end{equation}
  \begin{equation}\label{E:ker-est-c}
  |K(w, z_1) -K(w, z_2)|\lesssim \d(w, z_1)^{-2n}\,\frac{\d(z_1, z_2)}{\d(w, z_1)}\
    \quad \mbox{if}\ \
  \d(w, z_1)\geq c_1\d(z_1, z_2)\, .
  \end{equation}
  The assertion \eqref{E:ker-est-a} is obvious since $|\Delta(w, z)|= \d(w, z)^2$. To establish the other facts we first verify the simple inequality:
  \begin{equation}\label{E:ker-est-d}
  |\Delta (w_1, z) -\Delta (w_2, z)|\lesssim \d(w_1, w_2)^2 + \d(w_1, w_2)\,\d(w_1, z).
  \end{equation}
  Indeed, 
  $\langle\dee\rho(w_1), w_1-z\rangle - \langle\dee\rho(w_2), w_2-z\rangle =
  \langle\dee\rho (w_2), w_1-w_2\rangle +
  O(|w_1-w_2|\,|w_1-z|)$,  since $\dee\rho$ is Lipschitz.
  
  Inequality \eqref{E:ker-est-d} then follows, since
  $|\langle\dee\rho (w_2), w_1-w_2\rangle| = \d(w_2, w_1)^2$, while $|w_1-w_2|\lesssim \d(w_1, w_2)$ and $|w_1-z|\lesssim \d(w_1, z)$, by \eqref{E:useful}. Next, since $\d(w_1, z)\geq c_1\d(w_1, w_2)$, the triangle inequality (Proposition \ref{P:quasi-metric})
   implies that 
    $|\Delta (w_2, z)|\gtrsim |\Delta (w_1, z)|$. If we now use \eqref{E:cplx-est-a} with
  $u= \Delta (w_2, z)$ and $v=\Delta (w_1, z)$ we obtain that 
  \begin{equation*}
  |K(w_1, z) -K(w_2, z)| 
   \lesssim
  |\Delta (w_1, z) -\Delta (w_2, z)|\, |\Delta(w_1, z)|^{-n-1}\, .
  \end{equation*}
  Therefore \eqref{E:ker-est-d} and \eqref{E:dist-def}
    yield that the above is  majorized by 
 \begin{equation*}
 \d(w_1, z)^{-2n}
 \left[\frac{\d(w_1, w_2)}{\d(w_1, z)}+ \left(\frac{\d(w_1, w_2}{\d(w_1, z)}\right)^2\right]
 \lesssim
  \d(w_1, z)^{-2n}\frac{\d(w_1, w_2)}{\d(w_1, z)}
 \end{equation*}
 since $\d(w_1, w_2)/\d(w_1, z)\lesssim 1$. Hence \eqref{E:ker-est-b} is proved. The proof of
 \eqref{E:ker-est-c} is parallel to that of \eqref{E:ker-est-b}. 
  
 \section{Proof of the main theorem}\label{S:proof-MT}
 Here we conclude the proof of the main theorem, and in particular we deal with the (appropriate)
  adjoint of the Cauchy-Leray transform.
 \subsection{The adjoint of $\CT$}\label{SS:adj-CT} Having set down some basic properties of the Cauchy-Leray transform $\CT$, we turn to the actual proof of its $L^p$-boundedness. It is essential here that we establish for its ``adjoint'' $\CT^*$, 
  certain properties parallel to those proved for $\CT$. Here and in the manipulations below it is crucial that we are using the Leray-Levi measure $d\l$, and not the induced Lebesgue measure $d\sigma$; however this distinction will not matter in the final statements of the $L^p(\bndry\D)$-boundedness of $\CT$. 
We will define this purported adjoint as a limit, and so we consider $\CT$ itself as a limit, namely 
 $\displaystyle{\CT=\lim\limits_{\epsilon\to 0} \CT_\epsilon}$ where 
 $\CT_\epsilon (f) (\zp) = \CT f(z_\epsilon)$ for $z_\epsilon = \zp+ \epsilon\nu_{\zp}$ as in Section \ref{SS:repr-hol-fncts},
 with $\zp\in\bndry\D$ and $\nu_{\zp}$ the inner unit normal at $\zp$. That is, we take 
 $\CT_\epsilon$ to be given by the operator 
 \begin{equation*}\label{E:def-CT-eps}
 \CT_\epsilon (f)(\zp) =\int\limits_{w\in\bndry\D}\!\!\!\!  \Delta(w, z_\epsilon)^{-n}f(w)\,d\l (w)\, .
 \end{equation*}
 Since $|\Delta (w, z_\epsilon)|\gtrsim \epsilon$ for every $\epsilon>0$ sufficiently small (see Lemma \ref{L:Delta-epsilon}), the operator $\CT_\epsilon$ is bounded on every $L^p(\bndry \D, d\l)$ together with its genuine adjoint $\CT_\epsilon^*$, which 
  satisfies
 \begin{equation}\label{E:CT-eps-adj}
 \left(\CT_\epsilon (f), g\right) = \left(f, \CT_\epsilon^* (g) \right)\quad \mbox{ for all}\ \ f, g\in L^2(\bndry\D, d\l)
 \end{equation}
 where
 \begin{equation*}
 \left(f, g\right) = \int\limits_{w\in\bndry\D}\!\!\! 
 f(w)\,\overline{g} (w)\, d\l (w)\, ,
 \end{equation*}
 and is given by 
 \begin{equation*}\label{E:def-adj-CT-eps}
 \CT_\epsilon^* (f) (\zp) = \int\limits_{w\in\bndry\D}\!\!\!  
 \overline{\Delta}  (\zp, w_\epsilon)^{-n} f(w)\, d\l (w)\, 
 \end{equation*}
 with $w_\epsilon = w+ \epsilon\nu_{w}$. 
 Our first goal is to show that the limit, $\displaystyle{\lim_{\epsilon\to 0}\CT_\epsilon}^*$, exists. For this we need the analogues for $\CT_\epsilon^*$ of the operators $\Eop$ and $\Rop$ of Section \ref{CL-ker-der}, which we will label, respectively,  $\Eopa$ and $\Ropa$. The operator $\Eopa$ is defined as 
 \begin{equation*}
 \Eopa (\omega) (\zp) = c_n \int\limits_{w\in\bndry\D}\!\!\!
 \overline{\Delta}  (\zp, w)^{-n+1}\omega\wedge j^*(\deebar\dee\rho)^{n-1}(w),\quad \zp\in\bndry\D,
 \end{equation*}
 for any continuous $1$-form $\omega$ on $\bndry\D$, with $c_n = 1/[(n-1) (2\pi i)^n]$.
  Also 
 \begin{equation}\label{E:def-Eopa}
 \Ropa (f) (\zp) = \frac{1}{(2\pi i)^n}\!\!\!\int\limits_{w\in\bndry\D}\!\!\!\!
\overline{\Delta}  (\zp, w)^{-n}\,f(w)\, j^*\big(\prec d\,\!\overline{w}, \beta  (\zp, w)\!\succ\big)\wedge j^*(\deebar\dee\rho)^{n-1}(w)
 \end{equation}
 for $\zp\in\bndry{\D}$, where now $\prec d \overline{w},\beta  (\zp, w)\rangle$ designates the 1-form
 \begin{equation}\label{E:def-beta}
 \prec d\,\!\overline{w},\beta  (\zp, w)\!\succ \ = \ 
 \sum\limits_{j=1}^n\left(\frac{\dee\rho (w)}{\dee\overline{w}_j} - 
 \frac{\dee\rho (\zp)}{\dee\overline{z}_j}
 \right) d\overline{w}_j\, .
 \end{equation}

Note that the integrals defining $\Eopa$ and $\Ropa$ converge absolutely for the same reasons that those defining $\Eop$ and $\Rop$ do.
\begin{proposition}\label{P:basic-identity-a} Suppose $f\in C^1(\bndry\D)$. Then $\lim_{\epsilon\to 0}\CT_\epsilon^* (f)(\zp)$ converges uniformly for $\zp\in\bndry\D$. If we denote $\CT^*(f)(\zp)$ this limit, then 
\begin{equation}\label{E:weak-basic-identity-adj}
 \CT^*(f) (\zp)= \Eopa (df) (\zp)+ \Ropa (f)(\zp), \quad \zp\in\bndry\D.
\end{equation}
\end{proposition}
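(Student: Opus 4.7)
The plan is to mirror the proof of Proposition \ref{P:basic-identity}, adapted to the conjugate kernel $\overline\Delta(\zp,w)^{-n}$ that appears in $\CT^*$. The starting point is a pointwise derivative identity analogous to \eqref{E:ker-deriv}. Since $\overline\Delta(\zp,w) = \sum_{j=1}^n \frac{\dee\rho}{\dee\overline{z}_j}(\zp)(\overline{\zp}_j-\overline{w}_j)$ depends on $w$ anti-holomorphically, a direct computation gives
\begin{equation*}
d_w\overline\Delta(\zp,w) \;=\; -\deebar\rho(w) \,+\, \prec d\overline{w},\beta(\zp,w)\succ,
\end{equation*}
with $\beta$ precisely the $1$-form of \eqref{E:def-beta}. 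Combining this with $d_w\overline\Delta^{-n+1} = (1-n)\overline\Delta^{-n}d_w\overline\Delta$, and using that $j^*\dee\rho = -j^*\deebar\rho$ on $\bndry\D$ (so the Leray--Levi measure can also be represented as $d\l(w) = -(2\pi i)^{-n}j^*(\deebar\rho\wedge(\deebar\dee\rho)^{n-1})(w)$), we obtain
\begin{equation*}
\overline\Delta(\zp,w)^{-n}\deebar\rho(w) \;=\; \frac{1}{n-1}\,d_w\overline\Delta(\zp,w)^{-n+1} \,+\, \overline\Delta(\zp,w)^{-n}\!\prec d\overline{w},\beta(\zp,w)\succ.
\end{equation*}

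I would then exploit this identity at the level of the regularized operator $\CT^*_\epsilon$, whose kernel $\overline\Delta(\zp,w_\epsilon)^{-n}$ is bounded on $\bndry\D$ thanks to Lemma \ref{L:Delta-epsilon}. Multiplying the $w_\epsilon$-version of the identity by $f(w)$, wedging with $j^*(\deebar\dee\rho)^{n-1}$ and integrating over $\bndry\D$, Stokes' theorem on the closed manifold $\bndry\D$ (where no boundary terms arise) transfers $d_w$ from $\overline\Delta^{-n+1}$ onto $f$, producing $\Eopa_\epsilon(df)(\zp)$; the second piece produces $\pm\Ropa_\epsilon(f)(\zp)$ (with exact sign to be confirmed from the conventions). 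Because $(\deebar\dee\rho)^{n-1}$ is only $L^\infty$ when $\rho\in C^{1,1}$, Stokes' theorem cannot be applied to $\rho$ directly; I would instead run the argument with the smooth approximations $\{\rho_k\}$ supplied by Proposition \ref{P:2} and pass to the limit $k\to\infty$ in the manner of the proof of Proposition \ref{P:indep-def-funct}, invoking the uniform convergence of $j^*\dee\rho_k$ on $\bndry\D$, the bounded a.e. convergence of $j^*(\deebar\dee\rho_k)^{n-1}$ (Proposition \ref{P:3} and \eqref{E:(1.12)}), and Proposition \ref{P:approx-LF} for the Lipschitz coefficients entering $\beta$. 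A minor bookkeeping item is that $w_\epsilon = w+\epsilon\nu_w$ depends on $w$ through the Lipschitz normal, so $d_w\overline\Delta(\zp,w_\epsilon)$ acquires an extra $O(\epsilon)$ term; the latter is absorbed against integrable kernels when $\epsilon\to 0$.

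Finally I would let $\epsilon\to 0$. The kernels of $\Eopa$ and $\Ropa$ have absolutely convergent singularities of orders $\d(w,\zp)^{-2n+2}$ and $\d(w,\zp)^{-2n+1}$ respectively, the latter because $|\beta(\zp,w)|\lesssim |w-\zp|\lesssim \d(w,\zp)$ (using that $\dee\rho$ is Lipschitz, together with \eqref{E:useful}), and both are integrable against $d\l$ by \eqref{E:int-est-a}. Lemma \ref{L:Delta-epsilon} combined with \eqref{E:cplx-est-a} then gives the pointwise convergences $\Eopa_\epsilon\to\Eopa$ and $\Ropa_\epsilon\to\Ropa$. To upgrade to uniform convergence in $\zp$, I would split each integral into the regions $\d(w,\zp)\leq\epsilon^{1/2}$ and its complement, exactly as in the derivation of \eqref{E:unif-conv-a}; using $f\in C^1(\bndry\D)$ to bound both $\|f\|_{L^\infty}$ and $\|df\|_{L^\infty}$, together with the integral estimates of Corollary \ref{C:int-ests}, one obtains a uniform convergence rate $O(\epsilon^\delta)$ for some $\delta>0$. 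This simultaneously proves that $\CT^*_\epsilon(f)(\zp)$ converges uniformly on $\bndry\D$ and that the limit equals $\Eopa(df)(\zp)+\Ropa(f)(\zp)$, establishing \eqref{E:weak-basic-identity-adj}.

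The principal obstacle is performing Stokes' theorem and the $\rho_k$-approximation in the $C^{1,1}$ setting in parallel with tracking the $O(\epsilon)$ corrections arising from the $w$-dependence of $w_\epsilon$; however both difficulties reduce to careful combinations of techniques already developed in Sections \ref{S:C11} and \ref{SS:integral-estimates}, and ultimately the argument amounts to a transcription of the one used for Propositions \ref{P:basic-identity} and \ref{P:indep-def-funct}.
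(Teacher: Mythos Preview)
Your proposal is correct and follows essentially the same path as the paper. The paper likewise derives the $\epsilon$-level decomposition $\CT^*_\epsilon(f) = \Eopa_\epsilon(df) + \Ropa_\epsilon(f)$ in the manner of Proposition \ref{P:basic-identity}, isolating the $O(\epsilon)$ correction coming from $d_w\nu_w$ as an explicit term $\epsilon\,\Ropa_\epsilon^{(2)}$ (whose kernel has $L^1$-norm $O(\log(1/\epsilon))$, so the prefactor $\epsilon$ wins), and then passes to the limit $\epsilon\to 0$ via exactly the near/far splitting and the estimates of Corollary \ref{C:int-ests} that you describe.
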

The proof follows the same lines as that of Proposition \ref{P:basic-identity} but the details are a
 bit different. One begins by observing that the following decomposition
 \begin{equation*}
 \CT^*_\epsilon (f) (\zp) = \Eopa_\epsilon (df)(\zp) + \Ropa_\epsilon (f) (\zp)\, 
 \end{equation*}
 can be proved in the same way as the decomposition for $\CT$ given in Proposition \ref{P:basic-identity},
 where
 \begin{equation}\label{E:Sopa-eps-def}
 \Eopa_\epsilon (\omega)(\zp) = 
 c_n\!\!\!\int\limits_{w\in\bndry\D}\!\!\!\overline{\Delta}  (\zp, w_\epsilon)^{-n+1}\omega\wedge j^*(\deebar\dee\rho)^{n-1}(w)
 \end{equation}
 and $\Ropa_\epsilon (f) (\zp) = \Ropa_\epsilon^{\,(1)} (f) (\zp) + \epsilon\,
  \Ropa_\epsilon^{\,(2)} (f) (\zp)$, with
 \begin{equation}\label{E:Eopa-eps-1-def}
 \Ropa_\epsilon^{\,(1)} (f) (\zp) = 
 \frac{1}{(2\pi i)^n}\!\!\!\int\limits_{w\in\bndry\D}\!\!\!
\overline{\Delta}  (\zp, w_\epsilon)^{-n}\,f(w)\, 
j^*\big(\,\prec d\overline{w},\beta  (\zp, w)\!\succ\,\big)
\wedge j^*(\deebar\dee\rho)^{n-1}(w)
 \end{equation}
 with $\prec d\overline{w},\beta  (\zp, w)\!\succ$ the $1$-form that was given
  in \eqref{E:def-beta}, and
 \begin{equation*}\label{E:Eopa-eps-2-def}
 \Ropa_\epsilon^{\,(2)} (f) (\zp) =
 \frac{1}{(2\pi i)^n}\!\!\!\int\limits_{w\in\bndry\D}\!\!\!\!
\overline{\Delta}  (\zp, w_\epsilon)^{-n}\,f(w)\, 
j^*\big(\,\prec d\nu_w, \gamma(\zp)\!\succ\,\big)\wedge
 j^*(\deebar\dee\rho)^{n-1}(w)
 \end{equation*}
 with
 \begin{equation*}
 \prec d\nu_w, \gamma(\zp)\!\succ =
 \sum\limits_{j=1}^n\frac{\dee\rho}{\dee\overline{z}_j}(\zp)\,d\nu_j(w)
 \end{equation*}
where $d\nu_w = (d\nu_1 (w), \ldots, d\nu_n (w))$, so that $j^*(d\nu_j(w))$, $j=1,\dots, n$, are $1$-forms on $\bndry \D$ with bounded measurable coefficients. Now let us see what happens when $\epsilon\to 0$. First, if the form $\omega$ is
 bounded, then \eqref{E:Sopa-eps-def} shows that $\Eopa(\omega)(\zp) -\Eopa_\epsilon(\omega)$ is dominated by a multiple of
 \begin{equation*}
 \int\limits_{w\in\bndry\D}\!\!\!
 \big|\overline{\Delta}(\zp, w)^{-n+1} - \overline{\Delta} (\zp, w_\epsilon)^{-n+1}\big|\, d\l (w)\, .
 \end{equation*}
 We break this integral into the two regions where $\d(w, z)<\epsilon^{1/2}$ and the complementary region. In the first region we use Lemma \ref{L:Delta-epsilon}. In the second region we note that
  $\overline{\Delta}(\zp, w) - \overline{\Delta} (\zp, w_\epsilon) = O(\epsilon)$ and use the inequality \eqref{E:cplx-est-a} (with $n$ replaced by $n-1$, and with $u= \overline{\Delta} (\zp, w_\epsilon)$; $v= \overline{\Delta}(\zp, w)$)
  after invoking again Lemma \ref{L:Delta-epsilon}. The outcome is that the above integral is bounded by a multiple of
  \begin{equation*}
   \int\limits_{\d(w,\,\zp)\leq \epsilon^{1/2}}\!\!\!\!\!\!\!\!\!\!
  \d  (\zp, w)^{-2n+2}\,d\l (w) +
  \epsilon\!\!\!\!\!\!\!\!\!
   \int\limits_{\d (w,\,\zp)> \epsilon^{1/2}}\!\!\!\!\!\!\!\!\!\!
  \d  (\zp, w)^{-2n}\,d\l (w)\, ,
  \end{equation*}
  and this is $O(\epsilon + \epsilon\log(1/\epsilon))$, as $\epsilon\to 0$, by Corollary \ref{C:int-ests} in Section \ref{SS:integral-estimates}. Thus in particular $\Eopa_\epsilon (df) \to \Eopa (df)$
  uniformly on $\bndry\D$ as $\epsilon\to 0$.
  
  Next, by \eqref{E:Eopa-eps-1-def}, if $f$ is bounded, then 
  $\Ropa (f) (\zp) - \Ropa_\epsilon^{\,(1)}f(\zp)$ is dominated by a multiple of
   \begin{equation*}
 \int\limits_{w\in\bndry\D}\!\!\!
 \big|\overline{\Delta}(\zp, w)^{-n} - \overline{\Delta} (\zp, w_\epsilon)^{-n}\big|\, \d (\zp, w)\,d\l (w)\, 
 \end{equation*}
 since $|\deebar\rho (\zp) -\deebar\rho (w)|\lesssim \d (\zp, w)$. Thus by a very similar argument to that given just above, this is majorized by 
 \begin{equation*}
 \int\limits_{\d(w,\,\zp)\leq \epsilon^{1/2}}\!\!\!\!\!\!\!\!\!\!
  \d  (\zp, w)^{-2n+1}\,d\l (w) +
  \epsilon\!\!\!\!\!\!\!\!\!
   \int\limits_{\d (w,\,\zp)> \epsilon^{1/2}}\!\!\!\!\!\!\!\!\!\!
  \d  (\zp, w)^{-2n-1}\,d\l (w)
  \end{equation*}
which is $O(\epsilon^{1/2} + \epsilon^{1/2}) = O(\epsilon^{1/2})$. As a result, 
$\Ropa_\epsilon^{\,(1)}f\to \Ropa f$ uniformly on $\bndry\D$ as $\epsilon\to 0$. Finally, for bounded $f$, 
$\Ropa_\epsilon^{\,(2)}f$  is dominated by a multiple of 
$\int\! |\Delta  (\zp, w_\epsilon)|^{-n}\, d\l (w)$, and by Lemma \ref{L:Delta-epsilon} this is majorized
 by
  \begin{equation*}
 \epsilon^{-n}\!\!\!\!\!\!\!\!\!\!\!
 \int\limits_{\d(w,\,\zp)\leq \epsilon^{1/2}}\!\!\!\!\!\!\!\!\!\!
  d\l (w) +
   \int\limits_{\d (w,\,\zp)> \epsilon^{1/2}}\!\!\!\!\!\!\!\!\!\!
  \d  (\zp, w)^{-2n}\,d\l (w)\, ,
  \end{equation*}
  which is $O(\log(1/\epsilon))$ as $\epsilon\to 0$. The proposition is therefore proved.\\
  
  Note: it follows from the proposition just proved and from \eqref{E:CT-eps-adj} that 
  \begin{equation}\label{E:CT-duality}
  (\CT (f), g) = (f, \CT^* (g))\quad \mbox{for any}\ \ f, g\in C^1(\bndry\D)
  \end{equation}
  with $f\mapsto\CT^*(f)$ as in \eqref{E:weak-basic-identity-adj}.
  \begin{corollary}\label{C:basic-identity-a}
  We have that $\mathfrak{h} := \CT^*(1)$ is a continuous function that satisfies the H\"older-like condition
  \eqref{E:holder-d} for all $0<\alpha<1$.
  \end{corollary}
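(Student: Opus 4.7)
The plan is to apply Proposition \ref{P:basic-identity-a} with $f\equiv 1$. Since $df=0$, the identity \eqref{E:weak-basic-identity-adj} collapses to
\[
\mathfrak{h}(\zp)=\CT^*(1)(\zp)=\Ropa(1)(\zp)=\frac{1}{(2\pi i)^n}\!\!\!\int\limits_{w\in\bndry\D}\!\!\!\!\overline{\Delta}(\zp,w)^{-n}\,j^*\bigl(\prec d\overline{w},\beta(\zp,w)\succ\bigr)\wedge j^*(\deebar\dee\rho)^{n-1}(w).
\]
The decisive feature is that $\beta$ vanishes on the diagonal: since $\rho\in C^{1,1}$, its first-order partials are Lipschitz on $\Cn$, so $|\beta_j(\zp,w)|\lesssim|w-\zp|\lesssim\d(\zp,w)$ by \eqref{E:useful}. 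Combined with $|\overline{\Delta}(\zp,w)|=\d(\zp,w)^2$ and the boundedness of the coefficients of $j^*(\deebar\dee\rho)^{n-1}$, the integrand is dominated by $\d(\zp,w)^{-2n+1}$, which is uniformly integrable in $\zp$ by \eqref{E:int-est-a} with $\epsilon=1$. This already yields $\|\mathfrak{h}\|_{L^\infty(\bndry\D)}\lesssim 1$.

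For the H\"older-like estimate I fix $\zp_1,\zp_2\in\bndry\D$, set $r=\d(\zp_1,\zp_2)$, and (with $c_1$ large enough) split the difference $\mathfrak{h}(\zp_1)-\mathfrak{h}(\zp_2)$ into integrals over the near region $N=\{w:\d(\zp_1,w)\leq c_1 r\}$ and its complement $F$. On $N$, I bound $\mathfrak{h}(\zp_1)$ and $\mathfrak{h}(\zp_2)$ separately by the pointwise estimate above: each piece is dominated by a multiple of $\int_N\d(\zp_i,w)^{-2n+1}\,d\l(w)$, which is $O(r)$ by \eqref{E:int-est-a}, using that $N\subset\B_{Cr}(\zp_2)$ via the quasi-triangle inequality of Proposition \ref{P:quasi-metric}.

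On the far region $F$ I decompose the difference of integrands as
\[
\bigl[\overline{\Delta}(\zp_1,w)^{-n}-\overline{\Delta}(\zp_2,w)^{-n}\bigr]\beta(\zp_1,w)\;+\;\overline{\Delta}(\zp_2,w)^{-n}\bigl[\beta(\zp_1,w)-\beta(\zp_2,w)\bigr],
\]
up to the bounded factor $j^*(\deebar\dee\rho)^{n-1}$. The complex-conjugate analogue of \eqref{E:ker-est-c} (which follows by exactly the same argument as in Section \ref{SS:ker-ests}, since $|\overline{\Delta}|=|\Delta|$) controls the first bracket by $r\,\d(\zp_1,w)^{-2n-1}$; multiplying by $|\beta(\zp_1,w)|\lesssim\d(\zp_1,w)$ produces $r\,\d(\zp_1,w)^{-2n}$. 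For the second term, Lipschitz continuity of the first-order partials of $\rho$ gives $|\beta(\zp_1,w)-\beta(\zp_2,w)|\lesssim|\zp_1-\zp_2|\lesssim r$, while on $F$ one has $\d(\zp_2,w)\approx\d(\zp_1,w)$ by Proposition \ref{P:quasi-metric}, so this contribution is likewise $\lesssim r\,\d(\zp_1,w)^{-2n}$. Integrating either bound over $F$ and invoking \eqref{E:int-est-c} produces $O\bigl(r\log(1/r)\bigr)$. Combining the two regions gives
\[
|\mathfrak{h}(\zp_1)-\mathfrak{h}(\zp_2)|\lesssim r\log(1/r)\lesssim\d(\zp_1,\zp_2)^{\alpha}\quad\text{for every } 0<\alpha<1,
\]
which establishes the H\"older-like condition, with continuity of $\mathfrak{h}$ following at once.

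The only (minor) obstacle is the logarithmic factor arising from \eqref{E:int-est-c}, which prevents us from reaching the endpoint $\alpha=1$; this is harmless because the statement only claims the estimate for $\alpha<1$. Every other ingredient is a direct application of the kernel estimates of Section \ref{SS:ker-ests} and the integral bounds of Corollary \ref{C:int-ests}.
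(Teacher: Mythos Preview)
Your proof is correct and follows essentially the same approach as the paper: reduce to $\mathfrak{h}=\Ropa(1)$, exploit the Lipschitz vanishing of $\beta$ (equivalently, the paper's $\Nop(\zp,w)=\deebar\rho(\zp)-\deebar\rho(w)$) on the diagonal, split into near and far regions, and on the far region decompose the integrand as a kernel difference times $\beta$ plus a $\beta$ difference times the kernel, yielding the $O(r\log(1/r))$ bound via \eqref{E:int-est-c}. The only quibble is that the estimate you need for $|\overline{\Delta}(\zp_1,w)^{-n}-\overline{\Delta}(\zp_2,w)^{-n}|$ is the conjugate of \eqref{E:ker-est-b} (variation in the first argument) rather than \eqref{E:ker-est-c}, but as you note it follows identically from the arguments of Section~\ref{SS:ker-ests}.
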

  Indeed, by \eqref {E:weak-basic-identity-adj} we have $\CT^*(1) = \Ropa (1)$. Now, looking back
  at \eqref{E:def-Eopa} we see that $|\Ropa (1) (\zp_1) - \Ropa (1) (\zp_2)|$ is majorized by a 
  multiple of 
  \begin{equation}\label{E:majoriz-1}
   \int\limits_{w\in\bndry\D}\!\!\!
 \big|\overline{\Delta}(\zp_1, w)^{-n}\Nop(\zp_1, w) - 
 \overline{\Delta}(\zp_2, w_\epsilon)^{-n}\Nop (\zp_2, w)\big|\, d\l (w)\, 
  \end{equation}
  where $\Nop  (\zp, w) = \deebar\rho (\zp) - \deebar\rho (w)$. We then break the integration in 
  \eqref{E:majoriz-1} into two parts: where $\d  (w, z_1)\leq c_1\d (z_1, z_2)$ and where
  $\d (w, z_1) > c_1 \d (z_1, z_2)$. Over the first region the integral is majorized by 
  \begin{equation}\label{E:majoriz-2}
  \sum\limits_{j=1, 2}\ \,\int\limits_{\d (w,\,\zp_j)\leq c\,\d(\zp_1, \zp_2)}\!\!\!\!\!\!\!\!\!\!\!\!\!\!\!
 \d  (\zp, w)^{-2n}\,\d (\zp_j, w)\, d\l (w)\, ,
   \end{equation}
  if $c$ is a sufficiently large constant, since 
  \begin{equation}\label{E:estimate-1}
  |\Nop (\zp_j, w)|
   \lesssim |z_j-w|\lesssim \d(\zp_j, w)\, .
  \end{equation}
   Thus the contribution of \eqref{E:majoriz-2} is $O(\d (\zp_1, \zp_2))$.
   Over the second region we write the integrand in \eqref{E:majoriz-1} as 
  \begin{equation*}
  \Nop (w, \zp_1)\,\big[\,\overline{\Delta}(w, \zp_1)^{-n} - \overline{\Delta}(w, \zp_2)^{-n}\big] +
  \big[\Nop (w, \zp_1) -\Nop (w, \zp_2)\big]\,\overline{\Delta}(\zp_1, w)^{-n}\, .
  \end{equation*}
  We estimate this quantity using \eqref{E:cplx-est-a} and the estimate \eqref{E:estimate-1} (applied to $\zp_1$) for the first term; also, 
 we have
$  |\Nop(\zp_1, w) - \Nop (\zp_2, w)|\lesssim |\zp_1-\zp_2|\lesssim \d(\zp_1, \zp_2)$ for the second term. Hence altogether our integral is dominated by a multiple of
 \begin{equation*}
 \int\limits_{\d(w,\,\zp_1)\leq c\,\d(\zp_1, \zp_2)}\!\!\!\!\!\!\!\!\!\!\!\!\!\!\!\!
  \d (w, \zp_1)^{-2n+1}\,d\l (w)\ +\
  \d(\zp_1, \zp_2)\!\!\!\!\!\!\!\!\!\!\!\!\!\!\!\!\!
   \int\limits_{\d (w,\,\zp_1)> c_1\d(\zp_1, \zp_2)}\!\!\!\!\!\!\!\!\!\!\!\!\!\!\!\!
  \d (w, \zp_1)^{-2n}\,d\l (w)
  \end{equation*}
  and this is $O\big(\d(\zp_1, \zp_2)\,\log(1/\d(\zp_1, \zp_2))\big) = O(\d(\zp_1, \zp_2)^\alpha)$, for any $0<\alpha<1$. The corollary is thus proved.
\subsection{Further properties of $\CT^*$}\label{SS:further-CT-adj} As a consequence of the corollary just proved we have the following analogue of \eqref{E:limit-a}. Whenever $f$ is a function on $\bndry\D$ that satisfies the Ho\"lder-like condition \eqref{E:holder-d} for some $\alpha>0$, then $\CT^*_\epsilon (f)$ converges uniformly on $\bndry\D$ to a limit, which we also denote $\CT^*(f)$, and this is given by
\begin{equation}\label{E:limit-adj-a}
\CT^*(f)(\zp) =
\int\limits_{w\in\bndry\D}\!\!\!
 \frac{f(w)-f(\zp)}{\overline{\Delta} (\zp, w)^n}\, d\l (w) + \mathfrak{h}(\zp)\cdot f(\zp), \quad \zp\in\bndry\D\, .
\end{equation}
The proof is word for word almost the same as that of 
\eqref{E:limit-a} once we write
\begin{equation*}
\CT_\epsilon^*(f)(\zp) =
\int\limits_{w\in\bndry\D}\!\!\!
 \frac{f(w)-f(\zp)}{\overline{\Delta} (\zp, w_\epsilon)^n}\, d\l (w) + \big(\CT^*_\epsilon(1)(\zp)\big)\cdot f(\zp), \quad \zp\in\bndry\D
\end{equation*}
and use the fact that $\CT^*_\epsilon(1)(\zp)\to \mathfrak{h}(\zp)$ uniformly on $\zp\in\bndry\D$ by Proposition \ref{P:basic-identity-a}. 

A final parallel of $\CT^*$ with $\CT$ 
is a version of Proposition \ref{P:nbf} and its corollary that gives the action of $\CT^*$ on normalized bump functions.
\begin{proposition}\label{P:nbf-adj} Suppose $f$ is a normalized bump function, satisfying 
properties\ \ \ \ \eqref{E:bf-a} -- \eqref{E:bf-c}. Then
\begin{equation*}\label{E:nbf-adj-a}
\sup\limits_{\zp\in\bndry\D}|\CT^* (f) (\zp)|\lesssim 1\, ,\quad and\qquad
\|\CT^*(f)\|_{L^2(\bndry\D, d\l)}\lesssim r^n\, .
\end{equation*}
\end{proposition}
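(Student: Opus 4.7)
The plan is to mirror the proof of Proposition \ref{P:nbf} step for step, with the decomposition $\CT^*(f) = \Eopa(df) + \Ropa(f)$ from Proposition \ref{P:basic-identity-a} replacing the corresponding identity for $\CT$, and with the representation \eqref{E:limit-adj-a} replacing \eqref{E:limit-a}. The first step is to establish the analogue of Lemma \ref{L:nbf}: if $f_0\in C^1(\bndry\D)$ is supported in $\B_r(\wp)$ with $|f_0|\leq 1$ and $|\nabla f_0|\leq 1/r^2$, then $|\CT^*(f_0)(\wp)|\lesssim 1$. The $\Eopa$ contribution is handled exactly as in Lemma \ref{L:nbf}, using $|\overline{\Delta}(\wp,w)|=\d(\wp,w)^2$ together with \eqref{E:int-est-a} taken with $\epsilon=2$. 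For the $\Ropa$ contribution, the key observation is that the one-form $\prec d\overline w,\beta(\wp,w)\!\succ$ in \eqref{E:def-beta} has coefficients controlled by $|\deebar\rho(\wp)-\deebar\rho(w)|\lesssim \d(\wp,w)$, which yields an integrand of order $\d(\wp,w)^{-2n+1}$ and hence an absolutely convergent integral $\lesssim r\lesssim 1$ via \eqref{E:int-est-a} with $\epsilon=1$.

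With this analogue in hand, the centered sup bound $|\CT^*(f)(\wp)|\lesssim 1$ is obtained by the same cutoff device used in \eqref{E:key-a}: introduce the special coordinates at $\wp$, fix a smooth cutoff $\chi(w)=\chi_0(w_n/(cr^2))$, and set $f_0(w)=f(\wp)\chi(w)$. Then $|\CT^*(f_0)(\wp)|\lesssim 1$ by the lemma, while \eqref{E:limit-adj-a} gives
\[
\CT^*(f-f_0)(\wp)=\!\!\!\int\limits_{w\in\bndry\D}\!\!\!\frac{f(w)-f_0(w)}{\overline{\Delta}(\wp,w)^n}\,d\l(w),
\]
because $\chi(\wp)=1$ forces $(f-f_0)(\wp)=0$ and the term $\mathfrak h(\wp)(f-f_0)(\wp)$ disappears. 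Estimating $|f(w)-f_0(w)|\leq |f(w)-f(\wp)|+|f(\wp)|\,|1-\chi(w)|\lesssim (\d(\wp,w)/r)^\alpha$ on $\B_r(\wp)$ (using \eqref{E:bf-c} for the first piece and $|1-\chi(w)|\lesssim \d(\wp,w)^2/r^2\leq (\d(\wp,w)/r)^\alpha$ for the second, since $\alpha<2$) reduces the matter to $r^{-\alpha}\int_{\B_r(\wp)}\d(\wp,w)^{-2n+\alpha}d\l(w)\lesssim 1$ via \eqref{E:int-est-a}.

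Extending to $\sup_{\zp}|\CT^*(f)(\zp)|\lesssim 1$ proceeds in the same two stages as in Proposition \ref{P:nbf}. When $\d(\wp,\zp)\leq c_1 r$, a constant multiple of $f$ is a normalized bump function associated with $\B_{c_2 r}(\zp)$, so the centered estimate applies at $\zp$. When $\d(\wp,\zp)>c_1 r$, we have $f(\zp)=0$; the quasi-triangle inequality \eqref{E:aux-ineq-b} gives $\d(w,\zp)\gtrsim \d(\wp,\zp)$ for every $w$ in the support of $f$, and \eqref{E:limit-adj-a} collapses to $\int f(w)\,\overline{\Delta}(\zp,w)^{-n}d\l(w)$, bounded by $\l(\B_r(\wp))\d(\wp,\zp)^{-2n}\approx (r/\d(\wp,\zp))^{2n}\leq 1$. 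The $L^2$ bound then follows by decomposing $\|\CT^*(f)\|_{L^2}^2$ at $\B_{c_1 r}(\wp)$: the inner integral is $\lesssim \l(\B_{c_1 r}(\wp))\approx r^{2n}$ from the sup bound, and the outer integral is $\lesssim r^{4n}\int_{\B_{c_1 r}(\wp)^c}\d(\wp,\zp)^{-4n}d\l(\zp)\lesssim r^{2n}$ by \eqref{E:int-est-b} with $\epsilon=2n$.

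The only genuine departure from the $\CT$ argument is the appearance of $\mathfrak h=\CT^*(1)$ in place of the constant $1$ in \eqref{E:limit-adj-a}; this is harmless because in every invocation either $f(\zp)=0$ or the relevant term is arranged to vanish, and in any remaining case the boundedness of $\mathfrak h$ on $\bndry\D$ guaranteed by Corollary \ref{C:basic-identity-a} absorbs it into an $O(1)$ contribution. The most delicate verification is the $\Ropa$ estimate in the lemma, where one must confirm that the linear gain from the factor $\deebar\rho(\wp)-\deebar\rho(w)=O(\d(\wp,w))$ reduces the exponent in the integrand from $-2n$ to $-2n+1$, which is precisely what \eqref{E:int-est-a} with $\epsilon=1$ is designed to handle.
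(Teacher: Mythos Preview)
Your proof is correct and follows exactly the approach the paper indicates: it mirrors the proof of Proposition~\ref{P:nbf} using the decomposition \eqref{E:weak-basic-identity-adj} in place of Proposition~\ref{P:basic-identity} and the representation \eqref{E:limit-adj-a} in place of \eqref{E:limit-a}, with the function $\mathfrak h$ handled via Corollary~\ref{C:basic-identity-a}. The paper in fact omits the details entirely, noting only that once these identities are available the argument is nearly identical; your write-up supplies precisely those details.
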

Once we have the identities \eqref{E:weak-basic-identity-adj} and \eqref{E:limit-adj-a} the proof of this proposition is nearly identical with that of Proposition \ref{P:nbf} and its corollary, and will therefore be omitted.
\subsection{Application of the $T(1)$ theorem}\label{SS:T(1)} We invoke the extended $T(1)$ theorem in the context of spaces of homogeneous type of Coifman, as in \cite{DJS}, \cite{C-1} and \cite{C-2}. Following \cite[Chapter VI]{C-1}, we identify 
\begin{equation*}
X\ \ \mbox{with}\ \ \bndry\D\ \ \ \mbox{and}\ \ \ x, y\in X\quad \mbox{with}\ \  w, z\in \bndry\D,\ \mbox{respectively}.
\end{equation*}
Correspondingly,
\begin{equation*}
\rho(x, y):= \d(w, z);\ \ \ \mu(B(x, r)) :=\lambda(\B_r(z))\approx r^{2n};\ \ \ K(x, y) :=\Delta(w, z)^{-2n}
 \end{equation*}
 satisfy the conditions in \cite[Definition 6]{C-1} with $\epsilon:=\alpha$, for any 
 $0<\alpha\leq 1$. 
 
 Also, 
  $T:=\CT$ satisfies the conditions of Definitions 6 and 7 in \cite{C-1}.\\
 
 At this point it is convenient to modify $T$ by considering, instead:
 \begin{equation*}
 T_0(f) (\zp)\ =\ \CT(f)(\zp) -\frac{1}{\lambda_0}\!\!\!
 \int\limits_{w\in\bndry\D}\!\!\!\overline{\mathfrak{h}}(w)\, f(w)\, d\l (w)
 \end{equation*}
 where again $\mathfrak{h} = \CT^* (1)$, and $\lambda_0 =\l (\bndry\D)$. Note that by Corollary
  \ref{C:basic-identity-a}, the boundedness of $\CT$ in $L^p(\bndry\D)$ is equivalent to the boundedness 
  in $L^p(\bndry\D)$ of $T_0$. \\
  
  Also note that $(1, \mathfrak{h}/\lambda_0)=1$ (since $(\CT (1), 1) = (1, \CT^*(1))= (1, \mathfrak{h})$, and $\CT(1) =1$, by \eqref{E:cancellation} and \eqref{E:CT-duality}). Thus, the operator $T_0$ with kernel $K_0 := \Delta(w, \zp)^{-n} - \overline{\mathfrak{h}}(w)/\lambda_0$ satisfies the same properties as $T$ above, with the additional feature that $T_0(1) = T_0^* (1) =0$. Hence by \cite[Theorem 13]{C-1}, we conclude that $T_0$, and therefore $T$, is bounded:
  $L^p(\bndry\D)\to L^p(\bndry\D)$.
   
 \subsection{Further results} One has the following analog of the classical theorem of Privalov. 
 \begin{proposition}\label{P:further-holder}
For any $0<\alpha<1$, the transform $f\mapsto \CT(f)$ preserves the class of H\"older-like functions satisfying condition \eqref{E:holder-d}.
\end{proposition}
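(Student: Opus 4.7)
Fix $\zp_1, \zp_2 \in \bndry\D$ and set $\delta = \d(\zp_1, \zp_2)$; the goal is $|\CT(f)(\zp_1) - \CT(f)(\zp_2)| \lesssim \delta^\alpha$. Since $\CT(1) = 1$ by \eqref{E:cancellation}, replacing $f$ with $g := f - f(\zp_1)$ yields a function that still satisfies \eqref{E:holder-d} with the same constant, has $g(\zp_1) = 0$, $|g(\zp_2)| \lesssim \delta^\alpha$, and $\CT(f)(\zp_1) - \CT(f)(\zp_2) = \CT(g)(\zp_1) - \CT(g)(\zp_2)$. Representing each $\CT(g)(\zp_j)$ via \eqref{E:limit-a} (absolutely convergent since $|g(w) - g(\zp_j)| \lesssim \d(w,\zp_j)^\alpha$) gives
\[
\CT(g)(\zp_1) - \CT(g)(\zp_2) = \int_{\bndry\D}\!\Big[\frac{g(w)}{\Delta(w,\zp_1)^n} - \frac{g(w)-g(\zp_2)}{\Delta(w,\zp_2)^n}\Big]\,d\lambda(w) - g(\zp_2),
\]
with the trailing term of size $\delta^\alpha$. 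I would split the integration into $\mathrm{I}:=\B_{A\delta}(\zp_1)$, with $A$ a large constant, and its complement $\mathrm{II}$.

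On $\mathrm{I}$, $\d(w,\zp_2)\lesssim\delta$ by the quasi-triangle inequality of Proposition \ref{P:quasi-metric}, so I bound each piece separately: $\int_{\mathrm{I}}|g(w)|\,|\Delta(w,\zp_1)|^{-n}d\lambda \le \int_{\B_{A\delta}(\zp_1)}\!\d(w,\zp_1)^{\alpha-2n}d\lambda\lesssim\delta^\alpha$ by \eqref{E:int-est-a}, and analogously for the second piece using $\mathrm{I}\subset\B_{A'\delta}(\zp_2)$. On $\mathrm{II}$, where $\d(w,\zp_1)\approx\d(w,\zp_2)\gtrsim\delta$, I rewrite the integrand as
\[
(g(w)-g(\zp_2))\Big[\frac{1}{\Delta(w,\zp_1)^n}-\frac{1}{\Delta(w,\zp_2)^n}\Big]+\frac{g(\zp_2)}{\Delta(w,\zp_1)^n}.
\]
The first summand is controlled via the kernel regularity \eqref{E:ker-est-c} and $|g(w)-g(\zp_2)|\lesssim \d(w,\zp_1)^\alpha$: its integral over $\mathrm{II}$ is majorized by $\delta\int_{\mathrm{II}}\d(w,\zp_1)^{\alpha-2n-1}d\lambda$, which is $\lesssim\delta^\alpha$ by \eqref{E:int-est-b} applied with $\epsilon=1-\alpha>0$.

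The main obstacle is the remaining term $g(\zp_2)\int_{\mathrm{II}}\Delta(w,\zp_1)^{-n}d\lambda(w)$: a crude absolute-value estimate via \eqref{E:int-est-c} produces an unwanted $\log(1/\delta)$ factor. To circumvent this I would use the reproducing identity \eqref{E:cancellation} in tandem with Proposition \ref{P:nbf}. Choose a smooth cutoff $\chi$ on $\bndry\D$ with $\chi\equiv 1$ on $\mathrm{II}$, $\chi\equiv 0$ on $\B_{A\delta/2}(\zp_1)$, and with regularity such that $1-\chi$ is (a constant multiple of) a normalized bump function associated to $\B_{A\delta}(\zp_1)$. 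Since $\chi(\zp_1)=0$, formula \eqref{E:limit-a} gives $\CT(\chi)(\zp_1)=\int \chi(w)\,\Delta(w,\zp_1)^{-n}d\lambda(w)$, while $\CT(\chi)(\zp_1)=1-\CT(1-\chi)(\zp_1)=O(1)$ by Proposition \ref{P:nbf}. The discrepancy between $\int \chi\,\Delta(w,\zp_1)^{-n}d\lambda$ and $\int_{\mathrm{II}}\Delta(w,\zp_1)^{-n}d\lambda$ is supported in the annular shell $\B_{A\delta}(\zp_1)\setminus\B_{A\delta/2}(\zp_1)$, whose measure $\approx(A\delta)^{2n}$ times the kernel size $\approx(A\delta)^{-2n}$ is $O(1)$. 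Hence the last term contributes $|g(\zp_2)|\cdot O(1)\lesssim\delta^\alpha$, completing the estimate. Once all pieces are assembled, $|\CT(f)(\zp_1)-\CT(f)(\zp_2)|\lesssim\delta^\alpha$, which is the desired preservation of the class \eqref{E:holder-d}.
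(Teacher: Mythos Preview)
Your proof is correct and follows essentially the same strategy as the paper's: both arguments rely on the representation \eqref{E:limit-a}, split into a near part (controlled by \eqref{E:int-est-a}) and a far part (controlled by the kernel regularity \eqref{E:ker-est-c} together with \eqref{E:int-est-b}), and handle the residual ``kernel-only'' integral via $\CT(1)=1$ and the bump-function bound of Proposition~\ref{P:nbf}. The only organizational difference is that the paper introduces the smooth cutoff $\chi_{\zp_1}$ from the outset (decomposing each $\CT(f)(\zp_j)$ into $I_j+II_j+f(\zp_j)$), whereas you first subtract $f(\zp_1)$, split by a sharp ball, and bring in the cutoff only at the end to control $\int_{\mathrm{II}}\Delta(w,\zp_1)^{-n}d\lambda$; the ingredients and estimates are the same.
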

To prove the proposition we need to show that $|\CT(f)(\zp_1)-\CT(f)(\zp_2)|\lesssim \d(\zp_1, \zp_2)^\alpha$ for any $\zp_1, \ \zp_2\in\bndry\D$, whenever $f$ satisfies
this same condition. Fix $\zp_1\in\bndry\D$ and consider the boundary ball $\B_r(\zp_1) =\{ w\in\bndry D\ :\  \d (\zp_1, w)<r\}$ with radius $r= C\,\d (\zp_1, \zp_2)$ where $C$ is a sufficiently large constant, and let $\chi_{\zp_1}(w)$ be the special cutoff function, supported in this ball, that was constructed in the proof of Proposition \ref{P:nbf} (with the center now at $\wp=\zp_1$). One decomposes each of 
$\CT(f)(\zp_1)$ and $\CT(f)(\zp_2)$ as follows:
\begin{equation*}
\CT(f)(\zp_j) = I_j + II_j + f(\zp_j),\quad j=1, 2
\end{equation*}
where
\begin{equation*}
I_j=\int\limits_{w\in\bndry\D}\!\!\!\!\!
K(w,\zp_j)\chi_{\zp_1}(w)(f(w)-f(\zp_j))\,d\l(w)
\end{equation*}
and
\begin{equation*}
II_j= \int\limits_{w\in\bndry\D}\!\!\!\!\!
K(w,\zp_j)(1- \chi_{\zp_1}(w))(f(w)-f(\zp_j))\,d\l(w)
\end{equation*}
with $K(w, \zp_j) = \Delta (w, \zp_j)^{-n}$. 
The first observation is then that each of $|I_1|$ and $|I_2|$ is majorized by a constant multiple of $\d(\zp_1, \zp_2)^\alpha$ (this is because the integrands are majorized by $\d(w, \zp_j)^{-2n+\alpha}$, and then one uses \eqref{E:int-est-a} with $\epsilon =\alpha$.) Next one shows that $|II_1 - II_2|$ is also majorized by 
a constant multiple of $\d(\zp_1, \zp_2)^\alpha$. To see this, one further decomposes the term $II_2$ as follows
\begin{equation*}
II_2 = \widetilde{II}_2 +
 +
(f(\zp_1)-f(\zp_2))\!\!\!\!\!\int\limits_{w\in\bndry\D}\!\!\!\!\!
K(w,\zp_2)(1- \chi_{\zp_1}(w))\,d\l(w)
\end{equation*}
with
\begin{equation*}
\widetilde{II_2} =  \int\limits_{w\in\bndry\D}\!\!\!\!\!
K(w,\zp_2)(1- \chi_{\zp_1}(w))(f(w)-f(\zp_1))\,d\l(w)\, .
\end{equation*}
Then one observes that the difference $|II_1 - \widetilde{II}_2|$ is easily seen to be majorized by
$\d(\zp_1, \zp_2)^\alpha$ via the kernel estimate for $|K(w, \zp_1) - K(w, \zp_2)|$, see \eqref{E:ker-est-b}, and the integral estimate \eqref{E:int-est-b} with $\epsilon =1$.
Finally, the integral in the remaining term, 
$\int
K(w,\zp_2)(1- \chi_{\zp_1}(w))\,d\l(w)$, is uniformly bounded, because $\CT (1) =1$ and $\CT(\chi_{\zp_1})$ is uniformly bounded by the remark following the proof of Lemma \ref{L:nbf}.
 The proof of Proposition \ref{P:further-holder} is concluded.
\section{Appendix I: Proof of some approximation lemmas}\label{S:Appr-lemm}
The proof of Lemma \ref{L:2} is more-or-less standard. We recall only what's involved in the proofs of $(c)$ and the first part of $(b)$.
We have 
$$\displaystyle \nabla^2 g_k (x) = \int_{\mathbb{R}^{N-1}}\limits\!\!\!  \nabla^2\!g\, (x - y) \,\eta_{\nicefrac{1}{k}} (y)\, dy,$$
 where $\nabla^2 g$ is the matrix of $L^\infty$ functions that arise as the second derivatives of $g$, taken in the sense of distributions. As a result, $\nabla^2 g_k(x) \to \nabla^2 g(x)$ for a.e. $x \in \mathbb{R}^{N-1}$, by the $(N-1)$-dimensional Lebesgue differentiation theorem. 
To see the first assertion in $(b)$, write 
$$\displaystyle g(x) - g_k (x) = \int_{\mathbb{R}^{N-1}}\limits\!\!\! (g(x) - g(x-y))\, \eta_{\nicefrac{1}{k}} (y)\, dy.$$
\noindent However, using
Lemma \ref{L:1}, and
taking into account that 
$\int\limits_{\mathbb R^{N-1}}\!\!\! y_j\, \eta_{\nicefrac{1}{k}}(y)\,dy = 0$ (because $\eta$ is even), we get the estimate
\begin{equation*}
 | g(x) - g_k (x) | \leq c\!\! \int_{\mathbb{R}^{N-1}}\limits\!\!\! | y |^2\, | \eta_{\nicefrac{1}{k}} (y) |\, dy \ .                                                                                                                                                                 
\end{equation*}
 The conclusion
 $ \| g - g_k \| = O(\nicefrac{1}{k^2} )$
    then follows because
\begin{equation*}
   \int_{\mathbb{R}^{N-1}}\limits\!\!\! | y |^2 | \eta_{\nicefrac{1}{k}} (y) |\, dy\  = 
   \ k^{N-1}\!\!\!\!\! \int\limits_{\mathbb{R}^{N-1}}\!\!\! | y |^2\, | \eta (ky) |\, dy \ = \ 
   k^{-2} \!\!\!\!\!\int\limits_{\mathbb{R}^{N-1}}\!\!\!| y |^2\, |\eta (y) | \,dy \ .
\end{equation*}

We turn to the proof of Lemma \ref{L:3}. It is based on the following four observations about the functions $(\eta_{\nicefrac{1}{k}} \ast \eta_{| y |})(x)$.
First,  as functions of $x \in \mathbb{R}^{N-1}$, these are supported in the ball $| x | < \nicefrac{1}{k} + | y |$. This is clear because $\eta_{\nicefrac{1}{k}}$ and $\eta_{| y |}$ are supported in the balls $| x | < \nicefrac{1}{k}$ and $| x | < | y |$, respectively.
 Second, 
for each $k, \ ( \eta_{\nicefrac{1}{k}} \ast \eta_{| y |} ) (x)$ is a $C^\infty$-smooth function of $(x, y) \in \mathbb{R}^N$. For this, and some further observations, we avail ourselves of the Fourier inversion formula to write 
\begin{equation}\label{E:(1.6)}
( \eta_{\nicefrac{1}{k}} \ast \eta_{| y |} ) (x) = \int_{\mathbb{R}^{N-1}}\limits\!\!\! e^{2 \pi i x \cdot \xi} \,\widehat{\eta} (\nicefrac{\xi}{k})\, \widehat{\eta} (y \xi)\, d \xi \ ,  
\end{equation} 
where $\widehat{\eta}$ is the Fourier transform of $\eta$. (Here again we have used the fact that $\eta$, and hence $\widehat{\eta}$ are even.) From \eqref{E:(1.6)} and the rapid decay and regularity of $\widehat{\eta}$, the $C^\infty$ character of $\left( \eta_{\nicefrac{1}{k}} \ast \eta_{| y |}\right) (x)$ is evident.
Third, 
one has the estimates
\begin{equation}\label{E:(1.7)}
   \begin{cases}
   \sup_x\limits D^2 \left( \eta_{\nicefrac{1}{k}} \ast \eta_{| y |}\right) (x) &\lesssim  
 \min\left( k^{N+1} , | y |^{-N-1} \right)\\
   \sup_x\limits D \left( \eta_{\nicefrac{1}{k}} \ast \eta_{| y |}\right) (x) &\lesssim  \min \left( k^{N} , | y |^{-N} \right)
     \end{cases}
\end{equation} 
Indeed, by examining \eqref{E:(1.6)} we see that any second derivative of $\eta_{\nicefrac{1}{k}} \ast \eta_{| y |}$ brings down a quadratic factor in $\xi$ and possibly replaces the $\widehat{\eta}$ by other rapidly decreasing functions. As a result, 
\begin{equation}\label{E:(1.8)}
   \left| D^2\!\left( \eta_{\nicefrac{1}{k}} \ast \eta_{| y |} \right) \right| \lesssim \int_{\mathbb{R}^{N-1}}\limits | \xi |^2 \left| \Phi \left( \nicefrac{\xi}{k} \right) \right| d \xi
\end{equation} 
where $\Phi$ is rapidly decreasing. The integral is a constant multiple of $k^{N+1}$, as a change of scale shows. By the same observation we can replace the right side of \eqref{E:(1.8)} by $
\int\!
 | \xi |^2 \left| \Phi \left( \xi y \right) \right| d \xi$, which is a multiple of $| y |^{-N-1}$. This establishes the first inequalities in \eqref{E:(1.7)}; the others are proved in the same way. 
Finally, 
we have the following identities.
\begin{equation}\label{E:aux-3}
\displaystyle \int\limits_{\mathbb{R}^{N-1}}\!\!\!( \eta_{\nicefrac{1}{k}} \ast \eta_{| y |})(x)\, dx = 1\,;\qquad 
\displaystyle \int\limits_{\mathbb R^{N-1}}\!\!\!\!  D\!  \left( \eta_{\nicefrac{1}{k}} \ast \eta_{| y |} \right) dx = 0\,;
\end{equation}
\begin{equation}\label{E:aux-6a}
\int\limits_{\mathbb R^{N-1}}\!\!\!\!
 \dee^2_{yy}\!\left( \eta_{\nicefrac{1}{k}} \ast \eta_{| y |} \right)
 dx \ =\ 0\ =
\int\limits_{\mathbb R^{N-1}}\!\!\!\! x_j\,
 \dee_{y}\! \left( \eta_{\nicefrac{1}{k}} \ast \eta_{| y |} \right)
dx \, \ j=1,\ldots, N-1\, ;
\end{equation}
\begin{equation}\label{E:aux-5}
 \int\limits_{\mathbb R^{N-1}}\!\!\!\! x_j\, 
  \dee_{x_i}(\eta_{\nicefrac{1}{k}} \ast \eta_{| y |})
= \left\{ \begin{array}{rcl}
\ 0&\mbox{if}\quad i \neq j\\
-1&\mbox{if}\quad i=j\, .
\end{array}\right.
\end{equation}
The first identity in \eqref{E:aux-3} holds because $\int\!\eta_{\nicefrac{1}{k}}(x) dx =
 \int \!\eta_{| y |}(x) dx = 1$; the first of the two identities in \eqref{E:aux-6a}
  follows upon differentiating the first identity in \eqref{E:aux-3} with respect to $y$. The other identities
   are direct consequences of 
\eqref{E:aux-3} and integration by parts.

Armed with these observations, we now prove Lemma \ref{L:3}. 
We focus on the statements for $Dg_k$, since the statements for $g_k$ are proved in the same way and in fact are easier. First,
\begin{equation*}
   D g_k (x, y) = \int\limits_{\mathbb{R}^{N-1}}\!\!\! \left(g(x-u) - g(x)\right)\! D\!\left( \eta_{\nicefrac{1}{k}} \ast \eta_{| y |} \right)\!(u)\, du
\end{equation*}
because of \eqref {E:aux-3}. Next the integral is estimated by 
\begin{equation*}
  O \bigg(\int\limits_{| u | \leq | y |+1/k}\!\!\!\!\!\!\!\! | u |\, du \bigg)\!
    \cdot \min \left\{ k^N ,\, | y |^{-N} \right\}.
\end{equation*}
This is because $g(x-u) - g(x) = O( | u | )$; the function $\left( \eta_{\nicefrac{1}{k}} \ast \eta_{| y | } \right) (u)$ is supported in $| u | \leq | y |+\nicefrac{1}{k}$; and the second estimate in 
\eqref{E:(1.7)}. However 
$$\displaystyle \int\limits_{| u | \leq | y |+1/k}\!\!\!\!\!\!\!\!  | u |\,du = O \left( | y |+1/k \right)^N,\quad \mbox{while}\quad \left(| y |  +1/k\right)^N\!\! \cdot \min \left\{ k^N , | y |^{-N} \right\} = O(1),$$ so the assertion $\| D g_k ( \cdot , y ) \| = O (1)$ is established. 
The conclusion $\| D^2 g_k (\cdot , y) \| = O(\min \{k, | y |^{-1}\})$ is proved the same way. Finally, coming to conclusion {\em (d)}, we 
use Rademacher's theorem to assert that 
$g(x-u) - g(x) = (\nabla g(x), u)  + o(|u|)$ for a.e. $x\in\mathbb R^{N-1}$, because $g$ is Lipschitz. We insert this fact in the integral below
\begin{equation*}
   \dee_{x_j}g_k  (x,y) = \int \!\!g(x-u)\,
 \dee_{u_j} \!\! \left( \eta_{\nicefrac{1}{k}} \ast \eta_{| y |}  \right)\! (u,y)\, du ,
\end{equation*}
and using 
\eqref{E:aux-3} and \eqref{E:aux-5},
 we obtain
\begin{equation*}
   \dee_{x_j} g_k (x,u) - \dee_{x_j} g(x) = o
    \bigg( k^{N-1}\! \!\!\!\int\limits_{| u | < 2/k}\!\!\!\!\! \!| u |\, du \bigg) =\ o (1 ) \ , \quad \text{ as } \ k \rightarrow \infty,
\end{equation*}
as long as $| y | \leq \nicefrac{1}{k}$, thus showing  that 
$\dee_{x_j} g_k (x,y) \rightarrow \dee_{x_j} g (x)$ 
for almost every $x$.
The conclusion $\dee_y\, g_k (x, y)\rightarrow 0$ a.e. $x$ is proved similarly, using the second identity in \eqref{E:aux-6a}.

 \section{Appendix II: The implicit function theorem}\label {S:IFT}
Here we give a quantitative version of the implicit function theorem for $C^{1,1}$
functions that was referred to earlier on. Suppose $F(x, y)$ is a $C^{1,1}$ function for $x\in\mathbb R^{N-1}$, $y\in\mathbb R$. Assume $\|F\|_{C^{1,1}}\leq M$. Suppose also $F(0, 0) =0$,
$\dee_yF (0,0) =1$.
\begin{proposition}
There are $a, b>0$, $a=a(M),\ b=b(M)$ so that in the box $\{|x|\leq a,\ |y|\leq b\}$, the equation 
$F(x, y)=0$ has a unique solution for each $x$, with $|x|\leq a$. This defines $y=\varphi (x)$.
Then the function $\varphi (x)$ is of class $C^{1,1}$ (for $|x|\leq a$), and 
$\|\varphi\|_{C^{1,1}}\leq \mathfrak{m}$, with $\mathfrak{m}= \mathfrak{m}(M)$.
\end{proposition}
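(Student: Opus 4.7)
The plan is to run the classical implicit function theorem argument with explicit quantitative control, then observe that $C^{1,1}$ regularity of the solution follows from the $C^{1,1}$ hypothesis on $F$ together with the algebra of Lipschitz functions.

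First I would fix the box. The essential point is that $\nabla F$ is Lipschitz with constant at most $M$ and $\partial_y F(0,0)=1$, so $|\partial_y F(x,y)-1|\le M(|x|+|y|)$. Choosing $a,b$ small enough that $M(a+b)\le 1/2$ gives $\partial_y F\ge 1/2$ throughout the box $\{|x|\le a,\,|y|\le b\}$. After further shrinking $a$ so that $|F(x,0)|\le b/4$ for $|x|\le a$ (possible since $F(0,0)=0$ and $\nabla_x F$ is bounded by $M$), the fundamental theorem of calculus shows $F(x,b)\ge b/4>0$ and $F(x,-b)\le -b/4<0$. Together with strict monotonicity in $y$, this yields a unique solution $y=\varphi(x)$ in $[-b,b]$ for each $|x|\le a$, and the choices of $a,b$ depend only on $M$.

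The classical $C^1$ implicit function theorem then gives $\varphi\in C^1$ with
\[
\partial_{x_j}\varphi(x)\;=\;-\,\frac{\partial_{x_j}F(x,\varphi(x))}{\partial_y F(x,\varphi(x))},
\]
so $|\nabla\varphi|\le 2M$ uniformly, making $\varphi$ itself Lipschitz with constant $\le 2M$. To upgrade to $C^{1,1}$, I would observe that the numerator $x\mapsto \partial_{x_j}F(x,\varphi(x))$ is the composition of the $M$-Lipschitz function $\partial_{x_j}F$ with the $(1+2M)$-Lipschitz map $x\mapsto (x,\varphi(x))$, and is therefore Lipschitz with constant bounded in terms of $M$; the same reasoning applies to the denominator, which in addition is bounded below by $1/2$. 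Hence the quotient is Lipschitz in $x$ with constant depending only on $M$, so each $\partial_{x_j}\varphi$ is Lipschitz and $\|\varphi\|_{C^{1,1}}\le\mathfrak{m}(M)$.

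The main obstacle is essentially cosmetic bookkeeping: verifying that every constant introduced at each step (the radii $a,b$, the lower bound on $\partial_y F$, the Lipschitz constants of numerator and denominator, and finally $\mathfrak{m}$) genuinely depends only on $M$ and not on further features of $F$. There is no deep analytical difficulty beyond the classical theorem, because $C^{1,1}$ is preserved by composition, quotient by a function bounded away from zero, and the implicit function construction; the proof is, in effect, the observation that one can quantify the standard argument at each stage.
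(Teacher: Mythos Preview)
Your proposal is correct and follows essentially the same route as the paper: fix the box quantitatively so that $\partial_y F\ge 1/2$, obtain existence and uniqueness of $\varphi$ via monotonicity and the intermediate value theorem, derive the formula $\nabla\varphi = -(\partial_y F)^{-1}\nabla_x F$ evaluated along the graph, and then conclude $C^{1,1}$ by the algebra of Lipschitz functions applied to numerator and denominator. The only cosmetic difference is that the paper first establishes the Lipschitz bound on $\varphi$ directly from the defining equation and then extracts the derivative formula via a second-order expansion, whereas you invoke the classical $C^1$ implicit function theorem as a black box to get the formula and read off the Lipschitz bound from $|\nabla\varphi|\le 2M$; both are legitimate and lead to the same quantitative conclusion.
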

\begin{proof}
First consider the box $B_1 = \{|x|\leq a_1, \ |y|\leq a_1\}$ with $a_1$ chosen momentarily. Since $\dee_yF$ is Lipschitz, in this box, $|\dee_yF(x, y) - \dee_yF (0,0)|\leq M(|x|+|y|)\leq 2M a_1$. Hence if we take 
$2Ma_1 = 1/2$ then, using $\dee_yF(0, 0) =1$ we obtain
\begin{equation}\label{E: F_y-est-1}
\dee_yF(x, y)\geq 1/2,\quad \mbox{throughout}\ \ B_1.
\end{equation}
Next choose $a\leq a_1$, to be determined below. So if $|x|\leq a$, then by the mean-value theorem
$|F(x, 0)| = |F(x, 0) - F(0, 0)|\leq Ma$. However 
\begin{equation*}
F(x, y_+) = F(x, 0) + \int\limits_0^{y_+} \!\! \dee_yF( x, u)\,du\geq -Ma +y_+/2\,\geq\, 0\quad \mbox{if}\ \
y_+ =\, 2Ma.
\end{equation*}
Similarly  $F(x, y_-)\leq 0$, if $y_-=-2Ma$. Hence
 $F(x, y) =0$ has a unique solution $y=\varphi (x)$ that lies in the interval 
$[-2Ma, 2Ma]$ that is, $|\varphi (x)|\leq 2Ma$ (note that this solution is unique since $F(x, y)$ is strictly increasing in $y$, see \eqref{E: F_y-est-1}). We now consider our final box 
$B=\{|x|\leq a,\ |y|\leq b\}$ by setting 
\begin{equation*}
a= \frac{a_1}{2M} = \frac{1}{8M^2}\qquad \mbox{and}\qquad b= 2Ma = \frac{1}{2M}=a_1\, .
\end{equation*}

Then clearly $B\subset B_1$ and $(x, \varphi (x))\in B$ whenever $|x|\leq a$ (note that $M\geq 1$ automatically, since $\dee_yF(0, 0)=1$). Next, if $|x|\leq a$ and $|x+h|\leq a$, then
\begin{equation}\label{E:F-decomp}
F(x, \varphi (x+h)) - F (x, \varphi (x)) = 
F(x, \varphi (x+h)) - F(x+h, \varphi (x+h))
\end{equation}
since $F(x, \varphi (x)) = 0$ and $F(x+h, \varphi (x+h))=0$. However by the mean-value 
theorem, the lefthand side of \eqref{E:F-decomp} equals $\delta\cdot \dee_yF(x, y')$, where $\delta =\varphi (x+h) - \varphi (x)$ and $y'$ is a point on the segment joining $\varphi (x+h)$ to $\varphi (x)$. Thus by \eqref{E: F_y-est-1}, the absolute value of the lefthand side of \eqref{E:F-decomp} exceeds $|\delta|/2$. However since $F$ is Lipschitz with bound $M$, the righthand side of 
\eqref{E:F-decomp} is majorized by $M |h|$. As a result, $\varphi$ is Lipschitz and 
$|\varphi (x+h) -\varphi (x)| = |\delta| \leq 2M|h|$. Next we re-examine \eqref{E:F-decomp} using the fact that $F$ is of class $C^{1,1}$. We then see that the lefthand side is $\delta \dee_yF(x, \varphi (x)) + O(|\delta|^2)$, and the righthand side is $-(\nabla_{\! x}F(x, \varphi (x), h) + O(|h|^2)) + O(|\delta|\cdot |h|)$. But since $|\delta|=O(|h|)$, as we have seen, this yields (using again \eqref{E: F_y-est-1})
\begin{equation*}
\varphi (x+h)-\varphi (x) = -\frac{1}{\dee_yF(x, \varphi (x))}\, (\nabla_{\! x}F(x, \varphi (x)), h) + O(|h|^2)\, .
\end{equation*}
If we let $h\to 0$ we see that clearly $\varphi$ has all its first derivatives at each $x$, and
\begin{equation*}
\nabla\varphi (x) = -\frac{1}{\dee_yF(x, \varphi (x))}\, \nabla_{\! x}F(x, \varphi (x))
\end{equation*}
As a result, we see that $\nabla\varphi$ is a Lipschitz function since it arises as the composition of Lipschitz functions. In particular, $\nabla_{\!x}\,F (x, y)$ has Lipschitz norm
 majorized by $M$ and $\varphi$ has Lipschitz norm $2M$. Hence $\nabla_{\!x} F(x, \varphi (x))$ has
Lipschitz norm  majorized by $2M^2$. For similar reasons, $1/\dee_yF(x, \varphi (x))$ has Lipschitz norm less than or equal to $4\cdot 2M^2= 8M^2$. Altogether the Lipschitz norm of $\nabla\varphi$ is bounded above by $CM^4$, and this shows that $\|\varphi\|_{C^{1,1}}\leq C'M^4 =:\mathfrak{m}$.
\end{proof}
\noindent{\bf Remark 1.}
If our assumption were $\dee_yF(0, 0) = m\neq 0$ (instead of $m=1$), then by replacing $F$ by $F/m$ we would get a similar conclusion, with $M$ replaced by $M/|m|$. 
\noindent Also note that if we further assumed that $\nabla_{\!x }F(0, 0)=0$ then we would have $\nabla\varphi (0) =0$. \\

\noindent{\bf Remark 2.}
We  note that in the above we can write $F(x, y) = A(x, y) (y-\varphi (x))$, where $A$ is a Lipschitz function and $|A(x, y)|\geq |m|/2$.\\

 To see that this is true, write 
\begin{equation*}
F(x, y) =\int\limits_0^1\!\frac{d}{ds} \left(F(x, s(y-\varphi (x)) +\varphi (x)\right)ds.
\end{equation*} 
Then we take
\begin{equation*}
A(x, y) =\int\limits_0^1\!
\dee_y\!
\left(F(x, s(y-\varphi (x)) +\varphi (x)\right)ds,
\end{equation*} 
so $A$ is Lipschitz, and $A\geq 1/2$ (if $m=1$) by \eqref{E: F_y-est-1}. From this it is easy to prove that if $\rho$ and $ \rho'$ are a pair of $C^{1,1}$ defining functions for the same bounded domain $\D$, then $\rho' = a\rho$, where $a$ is a Lipschitz function with $a(w)>0$ for each $w\in\bndry\D$.

\end{document}